\documentclass[11pt,american]{amsart}

\usepackage{amssymb,babel,color}
\usepackage[T1]{fontenc}
\usepackage{microtype}
\definecolor{strongblue}{RGB}{0,0,160}
\definecolor{strongred}{RGB}{160,0,0}
\renewcommand*{\eqref}[1]{\textcolor{strongblue}{(\ref{#1})}}
\usepackage[ocgcolorlinks=true,citecolor=strongblue,linkcolor=strongblue,urlcolor=strongred,backref=page]{hyperref}
\numberwithin{equation}{section}
\usepackage[capitalize,nameinlink]{cleveref}
\usepackage[marginratio=1:1,paperwidth=460pt,paperheight=700pt,height=590pt,width=360pt]{geometry}
\usepackage[numeric,abbrev,nobysame,msc-links]{amsrefs}

\def\frak{\mathfrak}
\def\Bbb{\mathbb}
\def\Cal{\mathcal}

\let\phi\varphi

\newcommand{\x}{\times}
\renewcommand{\o}{\circ}

\newcommand{\ocirc}{\circledcirc}
\newcommand{\al}{\alpha}
\newcommand{\be}{\beta}
\newcommand{\ga}{\gamma}

\newcommand{\ep}{\epsilon}
\newcommand{\ka}{\kappa}
\newcommand{\la}{\lambda}
\newcommand{\om}{\omega}
\newcommand{\ph}{\phi}

\renewcommand{\th}{\theta}
\newcommand{\si}{\sigma}
\newcommand{\ze}{\zeta}
\newcommand{\Ga}{\Gamma}
\newcommand{\La}{\Lambda}
\newcommand{\Ph}{\Phi}

\newcommand{\Om}{\Omega}

\newcommand{\Ups}{\Upsilon}
\def\Rho{\mbox{\textsf{P}}}

\newcommand{\id}{\operatorname{id}}

\newcommand{\Ad}{\operatorname{Ad}}

\newcommand{\End}{\operatorname{End}}

\newcommand{\gr}{\operatorname{gr}}

\renewcommand*{\MR}[1]{\href{http://www.ams.org/mathscinet-getitem?mr=#1}{ MR~#1}}

\newcounter{theorem}
\numberwithin{theorem}{section}
\newtheorem{thm}[theorem]{Theorem}
\newtheorem*{thm*}{Theorem}
\newtheorem{lemma}[theorem]{Lemma}
\newtheorem{prop}[theorem]{Proposition}
\newtheorem{cor}[theorem]{Corollary}
\newtheorem*{lemma*}{Lemma \thesubsection}
\newtheorem*{prop*}{Proposition \thesubsection}
\newtheorem*{cor*}{Corollary}

\theoremstyle{definition}
\newtheorem{definition}[theorem]{Definition}
\newtheorem*{definition*}{Definition \thesubsection}
\newtheorem{example}[theorem]{Example}
\newtheorem*{example*}{Example \thesubsection}
\theoremstyle{remark}
\newtheorem{remark}[theorem]{Remark}
\newtheorem*{remark*}{Remark \thesubsection}

\makeatletter

\renewcommand*{\@seccntformat}[1]{%
   \csname the#1\endcsname.\hspace{1mm}} 
\def\@settitle{\begin{center}%
  \baselineskip14\p@\relax
    \sc
 \Large\@title
 \end{center}%
}
\def\@setauthors{%
  \begingroup
  \def\thanks{\protect\thanks@warning}%
  \trivlist
  \centering\footnotesize \@topsep30\p@\relax
  \advance\@topsep by -\baselineskip
  \item\relax
  \author@andify\authors
  \def\\{\protect\linebreak}%
  \sc\large{\authors}%
  \ifx\@empty\contribs
  \else
    ,\penalty-3 \space \@setcontribs
    \@closetoccontribs
  \fi
  \endtrivlist
  \endgroup
}
\def\@settitle{\begin{center}%
  \baselineskip14\p@\relax
    \sc
 \Large\@title
 \end{center}%
}
\def\@setauthors{%
  \begingroup
  \def\thanks{\protect\thanks@warning}%
  \trivlist
  \centering\footnotesize \@topsep30\p@\relax
  \advance\@topsep by -\baselineskip
  \item\relax
  \author@andify\authors
  \def\\{\protect\linebreak}%
  \sc\large{\authors}%
  \ifx\@empty\contribs
  \else
    ,\penalty-3 \space \@setcontribs
    \@closetoccontribs
  \fi
  \endtrivlist
  \endgroup
}

\makeatother

\begin{document}

\title{Geometric Theory of Weyl Structures
} 
\date{April 28, 2022}
\author{Andreas \v Cap}
\address{A.\v C.: Faculty of Mathematics\\
University of Vienna\\
Oskar--Morgenstern--Platz 1\\
1090 Wien\\
Austria}
\email{Andreas.Cap@univie.ac.at}
\author{Thomas Mettler}
\address{T.M.: Faculty of Mathematics, UniDistance Suisse, Schinerstrasse 18, 3900 Brig, Switzerland}
\email{thomas.mettler@fernuni.ch}

\begin{abstract}

Given a parabolic geometry on a smooth manifold $M$, we study a natural affine bundle $A \to M$, whose smooth sections can be identified with Weyl structures for the geometry. We show that the initial parabolic geometry defines a reductive Cartan geometry on $A$, which induces an almost bi-Lagrangian structure on $A$ and a compatible linear connection on $TA$. We prove that the split-signature metric given by the almost bi-Lagrangian structure is Einstein with non-zero scalar curvature, provided the parabolic geometry is torsion-free and $|1|$-graded. We proceed to study Weyl structures via the submanifold geometry of the image of the corresponding section in $A$. For Weyl structures satisfying appropriate non-degeneracy conditions, we derive a universal formula for the second fundamental form of this image. We also show that for locally flat projective structures, this has close relations to solutions of a projectively invariant Monge-Ampere equation and thus to properly convex projective structures.
\end{abstract}

\subjclass[2010]{53A20, 53B15, 53C15, 53C40, 58J60}

\maketitle

\pagestyle{myheadings} \markboth{\v Cap and Mettler}{Weyl structures}
 
\section{Introduction}\label{1}

Parabolic geometries form a class of geometric structures that look very diverse in
their standard description. This class contains important and well-studied examples
like conformal and projective structures, non-degenerate CR structures of
hypersurface type, path geometries, quaternionic contact structures, and various
types of generic distributions. They admit a uniform conceptual description as Cartan
geometries of type $(G,P)$ for a semisimple Lie group $G$ and a parabolic subgroup
$P\subset G$ in the sense of representation theory. Such a geometry on a smooth manifold $M$ is given by a
principal $P$-bundle $p : \mathcal{G} \to M$ together with a~\textit{Cartan
  connection} $\omega \in \Omega^1(\mathcal{G},\mathfrak{g})$, which defines an
equivariant trivialization of the tangent bundle $T\Cal G$. A standard reference for
parabolic geometries is \cite{book}.

The group $P$ can be naturally written as a semi-direct product $G_0\ltimes P_+$ of a
reductive subgroup $G_0$ and a nilpotent normal subgroup $P_+$. For a Cartan geometry
$(p:\Cal G\to M,\om)$ the quotient $\Cal G_0:=\Cal G/P_+\to M$ is a principal
$G_0$-bundle, and some parts of $\om$ can be descended to that bundle. In the
simplest cases, this defines a usual first order $G_0$-structure on $M$, in more
general situations a filtered analog of such a structure. Thus the Cartan geometry
can be viewed as an extension of a first order structure. This reflects the fact that
morphisms of parabolic geometries are in general not determined locally around a
point by their 1-jet in that point, and the Cartan connection captures the necessary
higher order information.

To work explicitly with parabolic geometries, one often chooses a more restrictive
structure, say a metric in a conformal class, a connection in a projective class or a
pseudo-Hermitian structure on a CR manifold, expresses things in terms of this choice
and studies the effect of different choices. It turns out that there is a uniform way
to do this that can be applied to all parabolic geometries, namely the concept of Weyl
structures introduced in \cite{Weyl}, see Chapter 5 of \cite{book} for an improved
exposition. Choosing a Weyl structure, one in particular obtains a linear connection
on any natural vector bundle associated to a parabolic geometry, as well as an
identification of higher order geometric objects like tractor bundles with more
traditional natural bundles. The set of Weyl structures always forms an affine space
modeled on the space on one-forms on the underlying manifold, and there are explicit
formulae for how a change of Weyl structure affects the various derived quantities.

The initial motivation for this article were the results in \cite{Dunajski-Mettler}
on projective structures. Such a structure on a smooth manifold $M$ is given by an
equivalence class $[\nabla]$ of torsion-free connections on its tangent bundle, where
two connections are called equivalent if they have the same geodesics up to
parametrization. While these admit an equivalent description as a parabolic geometry,
the underlying structure $\Cal G_0\to M$ is the full frame bundle of $M$ and thus
contains no information. Hence there is the natural question, whether a projective
structure can be encoded into a first order structure on some larger space
constructed from $M$. Indeed, in \cite{Dunajski-Mettler}, the authors associate to a
projective structure on an $n$-dimensional manifold $M$ a certain rank $n$ affine bundle $A \to M$, whose total space can be canonically endowed with a
neutral signature metric $h$, as well as a non-degenerate $2$-form $\Omega$. It turns
out that the metric $h$ is Einstein and $\Omega$ is closed. Moreover, the pair
$(h,\Omega)$ is related by an endomorphism of $TA$ which squares to the identity map
and its eigenbundles $L^{\pm}$ are Lagrangian with respect to $\Omega$. Equivalently,
we may think of the pair $(h,\Omega)$ as an \textit{almost para-K\"ahler structure} or as an \textit{almost bi-Lagrangian
  structure} $(\Omega,L^+,L^-)$ on $A$, see \cref{3.1} for the formal definition and more details. 

In addition, it is observed that the sections of $A \to M$ are in bijective
correspondence with the connections in the projective class. Consequently, all
the submanifold notions of symplectic -- and pseudo-Riemannian geometry can be
applied to the representative connections of $[\nabla]$. This leads in particular to
the notion of a minimal Lagrangian connection~\cite{Mettler:MinLag}. As detailed
below, this concept has close relations to the concept of properly convex
projective structures. These in turn provide a connection to the study of representation varieties and higher Teichm\"uller spaces, see~\cite{arXiv:1803.06870} for a survey. 

\medskip

In an attempt to generalize these constructions to a larger class of parabolic
geometries, we were led to a definition of $A\to M$ that directly leads to an
interpretation as a bundle of Weyl structures. This means that the space of sections
of $A\to M$ can be naturally identified with the space of Weyl structures for the
geometry $(p:\Cal G\to M,\om)$. At some stage it was brought to our attention that a
bundle of Weyl structures had been defined in that way already in the article
\cite{Herzlich} by M.~Herzlich in the setting of general parabolic geometries. In
this article, Herzlich gave a rather intricate argument for the existence of a
connection on $TA$ and used this to study canonical curves in parabolic geometries.

The crucial starting point for our results here is that a parabolic geometry $(p:\Cal
G\to M,\om)$ can also naturally be interpreted as a Cartan geometry on $A$ with
structure group $G_0$. This immediately implies that for any type of parabolic
geometry, there is a canonical linear connection on any natural vector bundle over
$A$ as well as natural almost bi-Lagrangian structure on $A$ that is compatible with
the canonical connection. So in particular, we always obtain a non-degenerate two-form
$\Om\in\Om^2(A)$, a neutral signature metric $h$ on $TA$ as well as a decomposition
$TA=L^-\oplus L^+$ as a sum of Lagrangian subbundles.

Using the interpretation via Cartan geometries, it turns out that all elements of the
theory of Weyl structures admit a natural geometric interpretation in terms of
pulling back operations on $A$ via the section defined by a Weyl structure. This
works for general parabolic geometries as shown in \cref{2}. In
particular, we show that Weyl connections are obtained by pulling back the canonical
connection on $A$, while the Rho tensor (or generalized Schouten tensor) associated to
a Weyl connection is given by the pullback of a canonical $L^+$-valued one-form on
$A$.

We believe that this interpretation of Weyl structures should be a very useful
addition to the tool set available for the study of parabolic geometries. Indeed,
working with the canonical geometric structures on $A$ compares to the standard way of using Weyl structures, like working on a frame bundle compares to working in local
frames.

For the second part of the article, we adopt a different point of
view. From \cref{3} on, we use the relation to Weyl structures as a tool for the
study of the intrinsic geometric structure on $A$ and its relation to non-linear
invariant PDE. Our first main result shows that one has to substantially restrict the
class of geometries in order to avoid getting into exotic territory. Recall that for
a parabolic subgroup $P\subset G$ the corresponding Lie subalgebra $\frak
p\subset\frak g$ can be realized as the non-negative part in a grading $\frak
g=\oplus_{i=-k}^k\frak g_i$ of $\frak g$, which is usually called a
$|k|$-grading. There is a subclass of parabolic geometries that is often referred to
as \textit{AHS structures}, see e.g.~\cite{BastonI,BastonII,AHS1}, which is the case
$k=1$, see \cref{2.2} and \cref{rem3.1} for more details. This is exactly the case in
which the underlying structure $\Cal G_0\to M$ is an ordinary first order
$G_0$-structure. In particular, there is the notion of intrinsic torsion for this
underlying structure. Vanishing of the intrinsic torsion is equivalent to the
existence of a torsion free connection compatible with the structure and turns out to
be equivalent to torsion-freeness of the Cartan geometry $(p:\Cal G\to M,\om)$. Using
this background, we can formulate the first main result of \cref{3}, that we prove as
\cref{thm3.1}:

\begin{thm*}
    Let $(p:\Cal G\to M,\om)$ be a parabolic geometry of type $(G,P)$ and $\pi:A\to M$ its
  associated bundle of Weyl structures. Then the natural 2-form $\Om\in\Om^2(A)$ is closed if and only if $(G,P)$ corresponds to a $|1|$-grading
  and the Cartan geometry $(p:\Cal G\to M,\om)$ is torsion-free.
\end{thm*}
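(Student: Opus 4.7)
My plan is to work on the total space $\Cal G$, where $\pi^*\Om$ has an explicit description as $\hat\Om := B(\om_-,\om_+)$, with $B$ the Killing form restricted to the non-degenerate pairing $\frak g_-\otimes\frak p_+\to\Bbb R$. Since $\om_\pm$ annihilate $G_0$-vertical vectors and $B$ is $G_0$-invariant, $\hat\Om$ is horizontal and $G_0$-equivariant, so closedness of $\Om$ on $A$ is equivalent to $d\hat\Om(X,Y,Z)=0$ for all triples $(X,Y,Z)$ of $G_0$-horizontal vectors on $\Cal G$ (those with $\om_0=0$).

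Applying the structure equation $d\om + \tfrac12[\om,\om] = K$ gives
\[
d\hat\Om = -\tfrac12 B([\om,\om]_-,\om_+) + \tfrac12 B(\om_-,[\om,\om]_+) + B(K_-,\om_+) - B(\om_-,K_+),
\]
which splits into an algebraic part (first two terms) and a curvature part. For the algebraic part, the key observation is that in a $|1|$-grading $[\frak g_{\pm1},\frak g_{\pm1}]\subset\frak g_{\pm2}=0$, hence $[\om,\om]_{\pm1} = 2[\om_0,\om_{\pm1}]$; a direct computation using ad-invariance of $B$ then shows the two algebraic contributions cancel identically as 3-forms. In a $|k|$-grading with $k\geq2$, additional brackets $[\frak g_{-i},\frak g_{-j}]\subset\frak g_{-(i+j)}$ and their $\frak g_+$-analogues with $i,j\geq1$ produce algebraic terms depending only on $\om_\pm$-components (no $\om_0$) that survive on $G_0$-horizontal triples. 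Evaluating $d\hat\Om$ at the flat model of type $(G,P)$, where $K\equiv0$, already shows these residuals do not vanish, forcing $k=1$.

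With the $|1|$-grading in place I would analyse the curvature part on horizontal triples decomposed by their components in $L^\pm$. Using $P$-horizontality of $K$ (so $K(X,\cdot)=0$ whenever $\om_-(X)=0$), only two configurations give non-trivial contributions: a triple with two vectors in $L^-$ and one in $L^+$ reduces $d\hat\Om$ to $B(K_{-1}(X,Y),\om_{+1}(Z))$, and non-degeneracy of $B$ then forces $K_{-1}=0$ (torsion-freeness is necessary); a triple with all three vectors in $L^-$ reduces $d\hat\Om$ to the cyclic sum $-\sum_{\mathrm{cyc}} B(\om_{-1}(X),K_{+1}(Y,Z))$. To treat this last residual for a torsion-free geometry I would pair the $\frak g_0$-component of the Bianchi identity $dK+[\om,K]=0$ under $B$ with the grading element $E\in\frak g_0$ and exploit $[E,\om_{-1}]=-\om_{-1}$, $[E,\frak g_0]=0$ together with ad-invariance of $B$; this identifies the cyclic sum with the exterior derivative $d\bigl(B(E,K_0)\bigr)$, and the normality conventions implicit in this paper's notion of a parabolic geometry make $K_0$ trace-free, so $B(E,K_0)=0$ and the residual vanishes.

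The hardest step is this Bianchi-based identification of the ``three $L^-$'' residual and its vanishing via trace-freeness of $K_0$. The algebraic cancellation for $|1|$-gradings is a finite representation-theoretic computation once the brackets are organised, and the non-cancellation for higher $k$ is read off immediately at a flat point, where only the algebraic part of $d\hat\Om$ survives.
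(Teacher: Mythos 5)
Your overall strategy is sound and runs closely parallel to the paper's: where the paper works downstairs on $A$, using $D\Om=0$ to reduce $d\Om$ to the cyclic sum $\sum_{\mathrm{cycl}}\Om(\tau(\xi,\eta),\ze)$ over the torsion of the canonical connection, you work upstairs on $\Cal G$ with the structure equation; these are equivalent bookkeeping devices, and your three-way case analysis (algebraic obstruction for $k>1$; the component $K_{-1}$ on triples with two $L^-$ and one $L^+$ entry; the complete alternation of $\ka_+$ on three $L^-$ entries) matches the paper's exactly. Your treatment of the last residual, pairing the Bianchi identity $dK=[K,\om]$ with the grading element $E$ and using $[E,\om_{-1}]=-\om_{-1}$ to identify the cyclic sum with $d\bigl(B(E,K_0)\bigr)$, is a clean repackaging of the paper's argument via the $\frak z(\frak g_0)$-component of the Bianchi identity.

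Two steps need repair. First, for $k\ge 2$ you conclude ``forcing $k=1$'' from non-vanishing of the algebraic residual \emph{at the flat model}. The theorem asserts non-closedness for \emph{every} geometry of type $(G,P)$ with $k>1$, and a priori the curvature part of $d\hat\Om$ could cancel the algebraic part on a curved geometry; showing $d\Om\neq 0$ at the flat model does not rule this out. You must isolate a piece of $d\hat\Om$ to which the curvature cannot contribute. The paper does this by testing on triples of pure grading degrees on which horizontality and regularity of $K$ kill all curvature terms while the residual $\sum_{\mathrm{cycl}}B([X,Y],Z)$ is non-zero by non-degeneracy of $B$ and the fact that $\ad(X)$ is injective on $\frak g_i$, $i\ge1$, for generic $X\in\frak g_{-1}$. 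In your setup the cleanest fix is a homogeneity count: the algebraic part of $d\hat\Om$ is supported on argument triples of total grading degree $0$, whereas by regularity the curvature terms require total degree $\le -1$, so no cancellation is possible. Second, the assertion that ``normality makes $K_0$ trace-free'' is not correct as stated: $\partial^*\ka=0$ imposes a Ricci-type contraction condition on $\ka_0$, not the vanishing of the trace $B(E,\ka_0(\cdot,\cdot))$ over the endomorphism slots. What is actually needed is that for a \emph{torsion-free} normal geometry $\ka_0$ is the lowest non-vanishing homogeneous component of the curvature, hence harmonic, hence has values in $\frak g_0^{ss}$, which is orthogonal to $E$; this is exactly how the paper argues, citing Theorem 4.1.1 of \cite{book}. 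With that substitution, $B(E,K_0)=0$ holds pointwise and your $d\bigl(B(E,K_0)\bigr)=0$ argument closes the proof.
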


Hence we restrict our considerations to torsion-free AHS structures from this point
on. Apart from projective and conformal structures, this contains also
Grassmannian structures of type $(2,n)$ and quaternionic structures, for which there
are many non-flat examples. For several other AHS structures, torsion-freeness
implies local flatness, but the locally flat case is of particular interest for us
anyway. Our next main result, which we prove in \cref{thm3.3}, vastly generalizes
\cite{Dunajski-Mettler}:

\begin{thm*}
  For any torsion-free AHS structure, the pseudo-Riemannian metric $h$ induced by the
  canonical almost bi-Lagrangian structure on the bundle $A$ of Weyl structures is an
  Einstein metric with non-zero scalar curvature. 
\end{thm*}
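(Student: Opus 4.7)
My plan is to work in the reductive Cartan geometry $(\mathcal G\to A,\om)$ of type $(G_0\ltimes V,G_0)$ with $V=\mathfrak g_{-1}\oplus\mathfrak g_1$ provided by the results of \cref{2}, express the Levi-Civita connection of $h$ in terms of the canonical connection $\nabla$, and read off the Ricci tensor from the Cartan curvature. Here $\nabla$ (induced by $\om_0$) automatically satisfies $\nabla h=0$ and preserves the splitting $TA=L^-\oplus L^+$, since $h$ and the Lagrangian decomposition arise from $G_0$-invariant data on $V$. Using that $\mathfrak g_{\pm 1}$ are abelian in a $|1|$-grading, the $\mathfrak g$-valued structure equations of $\om$ decouple as
\[
d\om_{\pm 1}+[\om_0,\om_{\pm 1}]=\ka_{\pm 1},\qquad d\om_0+\tfrac12[\om_0,\om_0]=\ka_0-[\om_{-1},\om_1].
\]
Torsion-freeness gives $\ka_{-1}=0$, so the torsion $T^\nabla$ takes values in the Lagrangian subbundle corresponding to $\mathfrak g_1$ and is represented by $\ka_1$, while the curvature $R^\nabla$ splits as a purely algebraic ``bracket'' summand $-[\om_{-1},\om_1]$, acting on $V$ through the $\mathfrak g_0$-action, plus the Cartan-curvature summand $\ka_0$.

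Next, I write $\tilde\nabla=\nabla+S$ for the Levi-Civita connection of $h$. The standard Koszul identity combined with $\nabla h=0$, the isotropy of the $\mathfrak g_1$-subbundle under $h$, and the fact that $T^\nabla$ takes values there, expresses $S$ as an explicit linear function of $\ka_1$ alone. Hence $R^{\tilde\nabla}=R^\nabla+d^\nabla S+S\wedge S$, and tracing yields a Ricci tensor that splits into a purely algebraic summand, coming from the bracket piece $-[\om_{-1},\om_1]$ of $R^\nabla$, and a curvature-dependent summand collecting all contributions from $\ka_0$, $\ka_1$, $d^\nabla S$, and $S\wedge S$.

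The heart of the argument is to show that the algebraic summand equals $c\cdot h$ for a constant $c$ depending only on $(G,P)$, while the curvature-dependent summand vanishes identically. The algebraic part is Lie-theoretic: the trace of $Z\mapsto -[[Z,X]_{\mathfrak g_0},Y]_V$ is a $G_0$-invariant symmetric bilinear form on $V$, and since the Killing form of $\mathfrak g$ restricted to $V$ is (up to scale) the unique such form and induces $h$, this summand equals $c\cdot h(X,Y)$ outright. The vanishing of the curvature-dependent summand is where the normality of $\om$ enters decisively: the Kostant-codifferential identity $\partial^*\ka=0$ produces exactly the trace identities needed to kill the $\ka_0$-contribution, and the contracted Bianchi identity for $\ka$ lets the residual terms from $d^\nabla S$ and $S\wedge S$ recombine with the remaining $\ka_1$-terms and cancel.

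The main obstacle is this last step. While $G_0$-equivariance strongly constrains which trace combinations can appear, matching the specific combinations produced by the Koszul formula, the Bianchi identity, and the quadratic term $S\wedge S$ with those annihilated by normality requires careful bookkeeping of the $\mathfrak g_0$-action on $V$, the duality pairing between $\mathfrak g_{-1}$ and $\mathfrak g_1$, and the grading components of $\ka$. Once this is carried out, the Einstein identity $\mathrm{Ric}^{\tilde\nabla}=c\cdot h$ with an explicit universal constant $c$ follows.
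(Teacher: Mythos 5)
Your plan follows essentially the same route as the paper's proof: split the Ricci tensor of $h$ into the contribution of the canonical connection $D$ and that of the contorsion determined by the torsion $\ka_1=Y$, show the purely algebraic bracket part is a multiple of $h$ by $G_0$-invariance and Schur's lemma (this is \cref{lem3.3}), and kill the curvature-dependent part using normality (all traces of $W$ vanish, since $W$ lies in an irreducible module of multiplicity one) together with the Bianchi identity (which gives both the vanishing of the complete alternation of $Y$ and the relation $D^+Y=W(\cdot)$ needed to handle the derivative of the contorsion), exactly as in the proof of \cref{thm3.3}. The one simplification you miss is that the quadratic term $S\wedge S$ vanishes identically rather than cancelling against anything, because the contorsion has values in $L^+$ and is annihilated by $L^+$-entries.
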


While one could prove the aforementioned Theorems on a case by case basis by using the techniques from \cite{Dunajski-Mettler}, our arguments instead rely on a careful analysis of the properties of the curvature tensor of the induced connection on $TA$. Following \cite{Mettler:MinLag}, we next initiate the study of Weyl structures via
the geometry of submanifolds in $A$. We call a Weyl structure $s : M \to A$ of a
torsion-free AHS structure~\textit{Lagrangian} if $s : M \to (A,\Omega)$ is a
Lagrangian submanifold. Likewise, $s$ is called~\textit{non-degenerate} if $s : M \to
(A,h)$ is a non-degenerate submanifold. We show that a Weyl structure is Lagrangian
if and only if its Rho tensor is symmetric and that it is non-degenerate if and only
if the symmetric part of its Rho tensor is non-degenerate. 

In \cref{thm3.4.1} we characterize Lagrangian Weyl structures that lead to totally
geodesic submanifolds $s(M)\subset A$, which provides a connection to Einstein metrics
and reductions of projective holonomy. If $s$ is non-degenerate, then
there is a well defined second fundamental form of $s(M)$ with respect to any linear
connection on $TA$ that is metric for $h$ and we show that this admits a natural
interpretation as a $\binom12$-tensor field on $M$. In our next main result,
\cref{thm3.4.2}, we give explicit formulae for the second fundamental forms of the canonical connection
and the Levi-Civita connection of $h$. These are universal formulae in terms of the
Weyl connection, the Rho-tensor, and its inverse, which are valid for all
torsion-free AHS structures. As an application, we are able to characterize
non-degenerate Lagrangian Weyl structures that are minimal submanifolds in $(A,h)$ in
terms of a universal PDE. Again, this is a vast generalization of
\cite[Theorem 4.4]{Mettler:MinLag}, where merely the case of projective structures
on surfaces was considered.

In \cref{4} we connect our results to the study of fully non-linear invariant PDE on
AHS structures. A motivating example arises from the work of E.~Calabi. In
\cite{MR0365607}, Calabi related complete affine hyperspheres to solutions of a
certain Monge-Amp\`ere equation. This Monge-Amp\`ere equation, when interpreted
correctly, is an invariant PDE that one can associate to a projective structure and
it is closely linked to properly convex projective manifolds,
see~\cite[Theorem 4]{MR1828223}. In \cref{thm4.3}, we relate Calabi's equation to our equation for a minimal
Lagrangian Weyl structure and as \cref{cor:propconvex}, we obtain:
\begin{cor*}
Let $(M,[\nabla])$ be a closed oriented locally flat projective manifold. Then $[\nabla]$ is
properly convex if and only if $[\nabla]$ arises from a minimal Lagrangian Weyl
structure whose Rho tensor is positive definite.
\end{cor*}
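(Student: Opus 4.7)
The plan is to obtain the corollary as an assembly of \cref{thm4.3} with the classical characterization of properly convex projective structures on closed oriented manifolds in terms of Calabi's Monge-Amp\`ere equation, namely \cite[Theorem 4]{MR1828223}. \cref{thm4.3} recasts the minimal Lagrangian PDE from \cref{thm3.4.2}, specialized to a locally flat projective manifold, as Calabi's Monge-Amp\`ere equation, while the cited result identifies properly convex projective structures on a closed oriented $M$ with solutions of precisely that equation. Once this dictionary is laid out, the two implications of the corollary follow formally.

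For the direction ``minimal Lagrangian with positive definite Rho implies properly convex,'' I would first translate the hypotheses on the section $s:M\to A$ into intrinsic terms using the discussion preceding \cref{thm3.4.1}: Lagrangianity of $s$ is equivalent to symmetry of the Rho tensor of the associated Weyl connection, while non-degeneracy of $s$, sharpened here to positive definiteness, is equivalent to the Rho tensor being a Riemannian metric on $M$. Then \cref{thm3.4.2} encodes minimality of $s(M)\subset(A,h)$ as a PDE in the Weyl connection, the Rho tensor and its inverse, \cref{thm4.3} converts this PDE into Calabi's equation with positive definite Hessian part, and \cite[Theorem 4]{MR1828223} delivers proper convexity of $[\nabla]$. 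Conversely, a properly convex locally flat $[\nabla]$ on a closed oriented $M$ is covered by the Cheng-Yau theorem underlying \cite[Theorem 4]{MR1828223}, which produces an equivariant hyperbolic affine sphere, equivalently a solution to Calabi's Monge-Amp\`ere equation whose defining Hessian is positive definite; reversing \cref{thm4.3} turns this solution into a representative in $[\nabla]$ with symmetric positive definite Rho that satisfies the minimality PDE of \cref{thm3.4.2}, i.e.\ into a minimal Lagrangian Weyl structure of the required type.

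The substantive content has been isolated in \cref{thm3.4.2} and \cref{thm4.3}, so the main remaining task is to verify that the correspondence they provide restricts to a bijection between Weyl structures with positive definite Rho tensor and Calabi solutions with positive definite Hessian. I expect this comparison of positivity, together with the matching of density weights and orientation conventions implicit in \cref{thm4.3}, to be the principal subtlety of the write-up; once it is made transparent, the corollary reduces to invoking \cite[Theorem 4]{MR1828223} in both directions.
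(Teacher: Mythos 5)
Your overall strategy --- assembling \cref{thm4.3} with the classical affine-sphere characterization of proper convexity --- is the paper's strategy, and your converse direction matches the paper's essentially verbatim: invoke \cite[Theorem 4]{MR1828223} to produce a (negative) solution $\si$ of \eqref{eq:projmongeampere} and feed it into \cref{thm4.3} to get a minimal Lagrangian Weyl structure with positive definite Rho tensor. The forward direction is where you diverge from the paper and where your outline has a real gap, which you flag as ``the principal subtlety'' but do not close. \cref{thm4.3} is a statement about the Weyl structure \emph{determined by a nowhere vanishing density} $\si\in\Ga(\Cal E(1))$. An arbitrary minimal Lagrangian Weyl structure with positive definite Rho tensor is not a priori of this exact form, so you cannot ``convert the minimality PDE into Calabi's equation'' until you have produced such a $\si$. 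The missing step is precisely the computation the paper performs: contracting the minimality equation \eqref{eq:minlag} once more with $\Rho^{ij}$ and using local flatness (which makes $\nabla^s\Rho^s$ totally symmetric) forces $\nabla^s\det(\Rho^s)=0$, so an appropriate root of $\det(\Rho^s)$ is a $\nabla^s$-parallel nowhere vanishing density and the Weyl structure is exact. Without this, the forward implication does not reduce to \cref{thm4.3}.

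Once that step is supplied, your route (through the ``solution implies properly convex'' direction of the Loftin/Cheng--Yau correspondence) is viable, but it is worth noting that the paper does something different and arguably cleaner in this direction: it bypasses the Monge--Amp\`ere equation entirely and instead verifies that the pair $(\nabla^s,\Rho^s)$ --- a torsion-free connection preserving a volume density, with $\Rho^s$ positive definite, equal to a negative constant multiple of the Ricci tensor of $\nabla^s$, and with totally symmetric covariant derivative --- satisfies the hypotheses of Theorem 3.2.1 of \cite{MR2402597}, which yields proper convexity directly. This avoids having to match the density weights, orientation, and sign normalizations of \eqref{eq:projmongeampere} in the forward direction; in your version those normalizations, together with the exactness step above, carry the substantive content rather than being a formality.
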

The convention for the Rho tensor used here is chosen to be consistent with \cite{book}. This convention is natural from a Lie theoretic viewpoint, but differs from the standard definition in projective -- and conformal differential geometry by a sign. It should also be noted that a relation between properly convex projective manifolds and
minimal Lagrangian submanifolds has been observed previously
in~\cite{MR2854277,MR2854275} (but not in the context of Weyl structures).

The notion of convexity for projective structures is only defined for locally flat structures. The above Corollary thus provides a way to generalize the notion of a properly convex projective structure to a class of projective structures that are possibly curved, namely those arising from a minimal Lagrangian Weyl structure. Going beyond projective geometry, this class of differential geometric structures is well-defined for all torsion-free AHS structures. 

We conclude the article by showing that there are analogs of the projective Monge-Amp\`ere equation for other AHS structures, and that these always can be described in terms of the Rho tensor, which provides a relation to submanifold geometry of Weyl structures. These topics will be studied in detail elsewhere.

\subsection*{Acknowledgments} A\v C gratefully acknowledges support by the Austrian
Science Fund (FWF): P 33559-N. A part of the research for this article was carried
out while TM was visiting Forschungsinstitut f\"ur Mathematik (FIM) at ETH
Z\"urich. TM thanks FIM for its hospitality and DFG for partial funding through the
priority programme SPP 2026 ``Geometry at Infinity''. We also thank M.~Dunajski,
J.\ \v Silhan and V.\ \v Zadn\`\i k for helpful discussions and the anonymous referee
for interesting comments that helped improving the article significantly.

\section{The bundle of Weyl structures}\label{2}

This section works in the setting of general parabolic geometries. We assume that the
reader is familiar with the basic concepts and only briefly collect what we need
about parabolic geometries and Weyl structures. Then we define the bundle of Weyl
structures and identify some of the geometric structures that are naturally induced
on its total space. We then prove existence of a canonical connection and explain how
these structures can be used as an equivalent encoding of the theory of Weyl
structures.

\subsection{Parabolic geometries}\label{2.1}
The basic ingredient needed to specify a type of parabolic geometry is a semisimple
Lie algebra $\frak g$ that is endowed with a so-called $|k|$-grading. This is a
decomposition
$$
\frak g=\frak g_{-k}\oplus\dots\oplus\frak g_{-1}\oplus\frak g_0\oplus\frak
g_1\oplus\dots\oplus\frak g_k
$$
of $\frak g$ into a direct sum of linear subspaces such that
\begin{itemize}
\item $[\frak g_i,\frak g_j]\subset\frak g_{i+j}$, where we agree that $\frak
  g_\ell=\{0\}$ for $|\ell|>k$.
\item No simple ideal of $\frak g$ is contained in the subalgebra $\frak g_0$.
\item The subalgebra $\frak p_+=\frak g_1\oplus\dots\oplus\frak g_k$ is generated by
  $\frak g_1$.  
\end{itemize}
In particular, this implies that the Lie subalgebra $\frak g_0$ naturally acts on
each of the spaces $\frak g_i$ via the restriction of the adjoint action. Moreover,
$\frak p:=\frak g_0\oplus\frak p_+$ is a Lie subalgebra of $\frak g$, which turns out
to be a parabolic subalgebra in the sense of representation theory.

Such $|k|$-gradings can be easily described in terms of the structure theory of
semisimple Lie algebras, see Section 3.2 of \cite{book}. In particular, it turns out
that any parabolic subalgebra is obtained in this way and, essentially, the
classification of gradings is equivalent to the classification of parabolic
subalgebras. Further, the decomposition $\frak p=\frak g_0\oplus\frak p_+$ is the
reductive Levi decomposition, so it is a semi-direct product, $\frak p_+$ is the
nilradical of $\frak p$, and the subalgebra $\frak g_0$ is reductive. Of
course, also $\frak g_-:=\frak g_{-k}\oplus\dots\oplus\frak g_{-1}$ is a Lie
subalgebra of $\frak g$, which is nilpotent by the grading property. It turns out
that $\frak g_-$ and $\frak p_+$ are isomorphic.

Next, one chooses a Lie group $G$ with Lie algebra $\frak g$. Then the normalizer of
$\frak p$ in $G$ has Lie algebra $\frak p$, and one chooses a closed subgroup
$P\subset G$ lying between this normalizer and its connected component of the
identity. The subgroup $P$ naturally acts on $\frak g$ and $\frak p$ via the adjoint
action. More generally, one puts $\frak g^i:=\oplus_{j\geq i}\frak g_j$ to define a
filtration of $\frak g$ by linear subspaces that is invariant under the adjoint
action of $P$. This makes $\frak g$ into a filtered Lie algebra in the sense that
$[\frak g^i,\frak g^j]\subset\frak g^{i+j}$.

Having made these choices, there is the concept of a \textit{parabolic geometry of
  type} $(G,P)$ on a manifold $M$ of dimension $\dim(G/P)$. This is defined as a
Cartan geometry $(p:\Cal G\to M,\om)$ of type $(G,P)$, which means that $p:\Cal G\to
M$ is a principal $P$-bundle and that $\om\in\Om^1(\Cal G,\frak g)$ is a
\textit{Cartan connection}. This in turn means that $\om$ is equivariant for the
principal right action, so $(r^g)^*\om=\Ad(g^{-1})\o\om$, reproduces the generators
of fundamental vector fields, and that $\om(u):T_u\Cal G\to\frak g$ is a linear
isomorphism for each $u\in\Cal G$. In addition, one requires two conditions on the
curvature of $\om$, which are called \textit{regularity} and \textit{normality},
which we don't describe in detail. 

While Cartan geometries provide a nice uniform description of parabolic geometries,
this should be viewed as the result of a theorem rather than a definition. To proceed
towards more common descriptions of the geometries, one first observes that the
Lie group $P$ can be decomposed as a semi-direct product. On the one hand, the
exponential map restricts to a diffeomorphism from $\frak p_+$ onto a closed normal
subgroup $P_+\subset P$. On the other hand, one defines a closed subgroup $G_0\subset
P$ as consisting of those elements, whose adjoint action preserves the grading of
$\frak g$ and observes that this has Lie algebra $\frak g_0$. Then the inclusion of
$G_0$ into $P$ induces an isomorphism $G_0\to P/P_+$.

Using this, one can pass from the Cartan geometry $(p:\Cal G\to M,\om)$ to an
underlying structure by first forming the quotient $\Cal G_0:=\Cal G/P_+$, which is a
principal $G_0$-bundle. Moreover, for each $i=-k,\dots,k$, there is a smooth
subbundle $T^i\Cal G\subset T\Cal G$ consisting of those tangent vectors that are
mapped to $\frak g^i\subset\frak g$ by $\om$. Since $T^1\Cal G$ is the vertical
bundle of $\Cal G\to\Cal G_0$, these subbundles descend to a filtration $\{T^i\Cal
G_0:i=-k,\dots,0\}$ of $T\Cal G_0$. Moreover, for each $i<0$, the component of $\om$
in $\frak g_i$ descends to define a smooth section of the bundle $L(T^i\Cal G_0,\frak
g_i)$ of linear maps, so this can be viewed as a partially defined $\frak g_i$-valued
differential form.

The simplest case here is $k=1$, for which the geometries in question are often
referred to as \textit{AHS structures}. In this case, one obtains a
$\frak g_{-1}$-valued one-form $\th$ on $\Cal G_0$, which is $G_0$-equivariant and
whose kernel in each point is the vertical subbundle. This means that
$(p_0:\Cal G_0\to M,\th)$ in this case simply is a classical first order structure
corresponding to the adjoint action of $G_0$ on $\frak g_{-1}$ (which turns out to be
infinitesimally effective). According to a result of Kobayashi and Nagano (see
\cite{KN1}), the resulting class of structures for simple $\frak g$ is very peculiar,
since these are the only irreducible first order structures of finite type, for which
the first prolongation is non-trivial. This class contains important examples, like
conformal structures, almost quaternionic structures, and almost Grassmannian
structures.

For general $k$, there is an interpretation of $\Cal G_0$ and the partially defined
forms as a filtered analogue of a first order structure. This involves a filtration of
the tangent bundle $TM$ by smooth subbundles $T^iM$ for $i=-k,\dots,-1$ with
prescribed integrability and non-integrability properties together with a reduction of structure
group of the associated graded vector bundle to the tangent bundle. This leads to
examples like hypersurface-type CR structures, in which the filtration is equivalent
to a contact structure, while the reduction of structure group is defined by an
almost complex structure on the contact subbundle. Further important example of such
structures are path geometries, quaternionic contact structures and various types of
generic distributions.

Except for two cases, the Cartan geometry can be uniquely (up to isomorphism)
recovered from the underlying structure (see Section 3.1 of \cite{book}), and indeed
this defines an equivalence of categories. So in this case, one has two equivalent
descriptions of the structure. The two exceptional cases are projective structures and a
contact analogue of those. In these cases, the underlying structure contains no
information respectively describes only the contact structure, and one in addition
has to choose an equivalence class of connections in order to describe the
structure. Still, these fit into the general picture with respect to Weyl structures,
which we discuss next.

\subsection{Weyl structures}\label{2.2}
These provide the basic tool to explicitly translate between the description of a
parabolic geometry as a Cartan geometry and the picture of the underlying
structure. So let us suppose that $(p:\Cal G\to M,\om)$ is a Cartan geometry of type
$(G,P)$ and that $p_0:\Cal G_0\to M$ is the underlying structure described in
\cref{2.1}. The original definition of a Weyl structure used in \cite{Weyl} is as a
$G_0$-equivariant section $\si$ of the natural projection $q:\Cal G\to \Cal G_0=\Cal
G/P_+$. One shows that such sections always exist globally and by definition, they
provide reductions of the principal $P$-bundle $p:\Cal G\to M$ to the structure group
$G_0\subset P$. As a representation of $G_0$, the Lie algebra $\frak g$ splits as
$\frak g_-\oplus\frak g_0\oplus\frak p_+$ (and indeed further according to the
$|k|$-grading). Thus, the pullback $\si^*\om$ splits accordingly into a sum of three
$G_0$-equivariant one-forms with values in $\frak g_-$, $\frak g_0$ and $\frak p_+$,
respectively, which then admit nice interpretations in terms of the underlying
structure. The $\frak g_0$-component defines a principal connection on $\Cal G_0$,
which induces the \textit{Weyl connections} on associated bundles. The component in
$\frak p_+$ descends to a one-form on $M$ with values in the
associated graded bundle to the
cotangent bundle $T^*M$, which is the \textit{Rho-tensor} associated to the Weyl
structure. The $\frak g_-$-component also descends to $M$ and provides an isomorphism
between the tangent bundle $TM$ and its associated graded bundle. For the structures
we consider in this article, this component coincides with the soldering form that
identifies $\Cal G_0$ as a reduction of structure group of $TM$.

As observed in \cite{Herzlich}, \textit{any} reduction of $p:\Cal G\to M$ to the
structure group $G_0\subset P$ comes from a Weyl structure. This is because the
composition of $q$ with the principal bundle morphism defining such a reduction
clearly is an isomorphism of $G_0$-principal bundles. Thus one could equivalently
define a Weyl structure as such a reduction of structure group and then observe that
this defines a $G_0$-equivariant section of $q:\Cal G\to\Cal G_0$. It is a classical
result that reductions of $\Cal G$ to the structure group $G_0$ can be equivalently
described as smooth sections of the associated bundle with fiber $P/G_0$. This
motivates the following definition from \cite{Herzlich}.

\begin{definition}\label{def2.2} 
  The \textit{bundle of Weyl structures} associated to the parabolic geometry
  $(p:\Cal G\to M,\om)$ is $\pi:A:=\Cal G\x_P(P/G_0)\to M$.
\end{definition}

The correspondence between Weyl structures and smooth sections of $\pi:A\to M$ can be
easily made explicit. Given a $G_0$-equivariant section $\si:\Cal G_0\to\Cal G$ one
considers the map sending $u_0\in \Cal G_0$ to the class of $(\si(u_0),eG_0)$ in
$\Cal G\x_P(P/G_0)$, where $e\in P$ is the neutral element. By construction, the
resulting smooth map $\Cal G_0\to A$ is constant on the fibers of $p_0:\Cal G_0\to M$
and thus descends to a smooth map $s:M\to A$, which is a section of $\pi$ by
construction. Conversely, a section $s$ of $\pi$ corresponds to a smooth,
$G_0$-equivariant map $f:\Cal G\to P/G_0$ characterized by the fact that $s(x)$ is
the class of $(u,f(u))$ for each $u$ in the fiber of $\Cal G$ over $x$. But then
$f^{-1}(eG_0)$ is a smooth submanifold of $\Cal G$ on which the projection
$q:\Cal G\to\Cal G_0$ restricts to a $G_0$-equivariant diffeomorphism. The inverse of
this diffeomorphism gives the Weyl structure determined by $s$.

From the definition, we can verify that the bundle of Weyl structures is similar to
an affine bundle. This will also provide the well known affine structure on Weyl
structures in our picture. To formulate this, recall first that the parabolic
subgroup $P\subset G$ is a semi-direct product of the subgroup $G_0\subset P$ and the
normal subgroup $P_+\subset P$. In particular, any element $g\in P$ can be uniquely
written as $g_0g_1$ with $g_0\in G_0$ and $g_1\in P_+$, compare with Theorem 3.1.3 of
\cite{book}, and of course $g_0g_1=(g_0g_1g_0^{-1})g_0$ provides the corresponding
decomposition in the opposite order.

\begin{prop}\label{prop2.2}
Let $\pi:A\to M$ be the bundle of Weyl structures associated to a parabolic geometry
$(p:\Cal G\to M,\om)$. Then sections of $\pi:A\to M$ can be naturally identified with
smooth functions $f:\Cal G\to P_+$ such that $f(u\cdot
(g_0g_1))=g_1^{-1}g_0^{-1}f(u)g_0$ for each $u\in\Cal G$, $g_0\in G_0$ and $g_1\in
P_+$.

Fixing one function $f$ that satisfies this equivariancy condition, any other
function which is equivariant in the same way can be written as $\hat
f(u)=f(u)h(u)$, where $h:\Cal G\to P_+$ is a smooth function such that
$h(u\cdot(g_0g_1))=g_0^{-1}h(u)g_0$.
\end{prop}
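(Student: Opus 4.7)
The plan is to exploit the semidirect product $P=G_0\ltimes P_+$ to first identify $P/G_0$ diffeomorphically with $P_+$ as a left $P$-space, and then translate the standard bijection between sections of an associated bundle and equivariant maps into the asserted form. For the diffeomorphism, consider the smooth map $P_+\to P/G_0$, $g_1\mapsto g_1G_0$. Its injectivity follows from $P_+\cap G_0=\{e\}$ (a consequence of $\frak p_+\cap\frak g_0=0$ together with how $G_0$ and $P_+$ are defined). For surjectivity, the identity $g_0g_1=(g_0g_1g_0^{-1})g_0$, combined with the normality of $P_+\subset P$, shows that every coset $gG_0$ with $g=g_0g_1$ has the form $(g_0g_1g_0^{-1})G_0$ with $g_0g_1g_0^{-1}\in P_+$.

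Next I would determine the induced left $P$-action on $P_+$. For $q=q_0q_1\in P$ and $\be\in P_+$, writing $q_0q_1\be=(q_0q_1\be q_0^{-1})q_0$ exhibits $q_0q_1\be q_0^{-1}\in P_+$ as the representative of the coset $q\be G_0$. Sections of $A=\Cal G\x_P(P/G_0)$ are in bijection with $P$-equivariant smooth maps $F:\Cal G\to P/G_0$, i.e.\ maps satisfying $F(u\cdot g)=g^{-1}\cdot F(u)$. Let $f:\Cal G\to P_+$ be the corresponding map under the diffeomorphism above. For $g=g_0g_1$ with $g_0\in G_0$ and $g_1\in P_+$, I decompose the inverse in the opposite order, $g^{-1}=g_1^{-1}g_0^{-1}=g_0^{-1}(g_0g_1^{-1}g_0^{-1})$, and insert this into the action formula to obtain
\[
g^{-1}\cdot f(u)=g_0^{-1}(g_0g_1^{-1}g_0^{-1})f(u)g_0=g_1^{-1}g_0^{-1}f(u)g_0,
\]
which is exactly the asserted equivariance condition for $f$.

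For the second assertion, given two such equivariant functions $f,\hat f:\Cal G\to P_+$, the pointwise product $h(u):=f(u)^{-1}\hat f(u)$ is a smooth map $\Cal G\to P_+$. The equivariance of $\hat f=fh$ reads
\[
f(u\cdot(g_0g_1))\,h(u\cdot(g_0g_1))=g_1^{-1}g_0^{-1}f(u)h(u)g_0,
\]
and substituting the equivariance of $f$ on the left and cancelling the common factor $g_1^{-1}g_0^{-1}f(u)$ gives $g_0\,h(u\cdot(g_0g_1))=h(u)g_0$, equivalent to $h(u\cdot(g_0g_1))=g_0^{-1}h(u)g_0$. The only genuine obstacle throughout is bookkeeping: one must consistently track whether the $G_0$- or $P_+$-factor sits on the left, and remember that transferring from a section to an equivariant map introduces an inverse. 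Once the diffeomorphism $P/G_0\cong P_+$ and the corresponding left action have been set up correctly, everything else is routine group-theoretic computation.
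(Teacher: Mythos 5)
Your proposal is correct and follows essentially the same route as the paper: identify $P/G_0$ with $P_+$ via the semidirect product decomposition (your inverse $gG_0\mapsto g_0g_1g_0^{-1}$ is exactly the paper's $g\mapsto g\alpha(g)^{-1}$), rewrite the standard equivariance condition for sections of the associated bundle in terms of $P_+$-valued functions, and obtain the second claim by cancelling the common factor $g_1^{-1}g_0^{-1}f(u)$. The only cosmetic difference is that you first derive the general left $P$-action on $P_+$ and then specialize, whereas the paper computes the coset representative of $g_1^{-1}g_0^{-1}\tilde g_1G_0$ directly; the computations coincide.
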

\begin{proof}
The inclusion $P_+\hookrightarrow P$ induces a smooth map $P_+\to P/G_0$, and from
the decomposition of elements of $P$ described above, we readily see that this is
surjective. On the other hand, writing the quotient projection $P\to G_0$ as $\al$,
the map $g\mapsto g\al(g)^{-1}$ induces a smooth inverse, so $P/G_0$ is diffeomorphic
to $P_+$. Since $A=\Cal G\x_P(P/G_0)$, smooth sections of $\pi:A\to M$ are in
bijective correspondence with $P$-equivariant smooth functions $\Cal G\to P/G_0$, so
these can be viewed as functions with values in $P_+$. The equivariancy condition
reads as $f(u\cdot (g_0g_1))=g_1^{-1}g_0^{-1}\cdot f(u)$. But starting from $\tilde
g_1G_0$, we get $g_1^{-1}g_0^{-1}\tilde g_1G_0=g_1^{-1}g_0^{-1}\tilde g_1g_0G_0$, and
$g_1^{-1}g_0^{-1}\tilde g_1g_0\in P_+$. This completes the proof of the first claim.

Given one function $f:\Cal G\to P_+$, of course any other such function can be
uniquely written as $\hat f=fh$ for a smooth function $h:\Cal G\to P_+$, so it
remains to understand $P$-equivariance. What we assume is that
$f(u\cdot(g_0g_1))= g_1^{-1}g_0^{-1}f(u)g_0$ and we want $\hat f$ to satisfy the
analogous equivariancy condition. But this exactly requires that
$g_1^{-1}g_0^{-1}f(u)g_0h(u\cdot(g_0g_1))= g_1^{-1}g_0^{-1}f(u)h(u)g_0$, which is
equivalent to the claimed equivariancy of $h$.
\end{proof}

To connect to the well-known affine structure on the set of Weyl structures, we
observe two alternative ways to express things using the exponential map. On the one
hand, we have observed above that $\exp:\frak p_+\to P_+$ is a diffeomorphism. Thus
we can write $h(u)=\exp(\Ups(u))$ and equivariancy of $h$ is equivalent to
$\Ups(u\cdot(g_0g_1))=\Ad(g_0)^{-1}(\Ups(u))$. On the other hand, in the proof of
Theorem 3.1.3 of \cite{book} it is shown that also
$(Z_1,\dots,Z_k)\mapsto\exp(Z_1)\cdots\exp(Z_k)$ defines a diffeomorphism
$\frak g_1\oplus\dots\oplus \frak g_k\to P_+$. Correspondingly, we can write
$h(u)=\exp(\Ups_1(u))\cdots\exp(\Ups_k(u))$ where $\Ups_i:\Cal G\to\frak g_i$ is a
smooth map for each $i=1,\dots,k$. Again, equivariancy of $h$ translates to
$\Ups_i(u\cdot(g_0g_1))=\Ad(g_0)^{-1}(\Ups_i(u))$ for each $i$.

There is also a nice global way to express the affine structure. The filtration of
$TM$ induced by a parabolic geometry dualizes to a filtration of the cotangent bundle
$T^*M$ and we can form the associated graded bundle $\gr(T^*M)$. The general theory
implies that this can be realized as $\Cal G\x_{P}\gr(\frak p_+)\cong\Cal
G_0\x_{G_0}\frak p_+$.
\begin{prop}\label{prop2.2a}
Let $\pi:A\to M$ be the bundle of Weyl structures associated to a parabolic geometry
$(p:\Cal G\to M,\om)$. Then for any smooth section $s$ of $\pi$, there is an induced
diffeomorphism $\phi_s:T^*M\to A$.
\end{prop}
\begin{proof}
Let $\si:\Cal G_0\to\Cal G$ be the $G_0$-equivariant section determined by $s$. Since
$\exp:\frak p_+\to P_+$ is a diffeomorphism, we conclude that
$\Phi_s(u_0,Z):=\si(u_0)\exp(Z)$ defines a diffeomorphism $\Phi_s:\Cal G_0\x\frak
p_+\to \Cal G$. Given $g_0\in G_0$, the definition readily implies that $\Phi_s(u\cdot
g_0,\Ad(g_0^{-1})(Z))=\Phi_s(u_0,Z)\cdot g_0$. Hence there is an induced
diffeomorphisms between the orbit spaces $\gr(T^*M)=\Cal G_0\x_{G_0}\frak p_+$ and
$A=\Cal G_0/G_0$.
\end{proof}

\subsection{The basic geometric structures on \texorpdfstring{$A$}{A}}\label{2.3} 
It was shown in \cite{Herzlich} that the parabolic geometry $(p:\Cal G\to M,\om)$
gives rise to a connection on the tangent bundle $TA$ of $A$. The argument used to
obtain this connection is rather intricate: There is the opposite parabolic subgroup
$P^{op}$ to $P$ which corresponds to the Lie subalgebra
$\frak g_-\oplus\frak g_0\subset\frak g$ and one considers the homogeneous space
$G/P^{op}$. Restricting the $G$-action to $P$ and forming the associated bundle
$\Cal G\x_P(G/P^{op})$ the Cartan connection $\om$ induces a natural affine
connection on the total space of this bundle. It is then easy to see that
$P\cap P^{op}=G_0$, so acting with $P$ on $eP^{op}$ defines a $P$-equivariant open
embedding $A\to\Cal G\x_P(G/P^{op})$, thus providing a connection on $TA$ as
claimed. Our first main result provides a more conceptual description of this
connection, which directly implies compatibility with several additional geometric
structures on $A$.

\begin{prop}\label{prop2.3}
  The canonical projection $\Cal G\to A$ is a $G_0$-principal bundle and $\om$
  defines a Cartan connection on that bundle, so $(\Cal G\to A,\om)$ is a Cartan
  geometry of type $(G,G_0)$. In particular, the $\frak g_0$-component of $\om$
  defines a canonical principal connection on $\Cal G\to A$ and
  $TA\cong\Cal G\x_{G_0}(\frak g/\frak g_0)$, so this inherits a canonical linear
  connection. Finally, there is a natural splitting $TA=L^-\oplus L^+$ into a direct
  sum of two subbundles of rank $\dim(M)$, which is parallel for the connection and
  such that $L^+$ is the vertical bundle of $\pi$.
\end{prop}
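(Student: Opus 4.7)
The strategy is to exploit the fact that the decomposition $\frak g=\frak g_-\oplus\frak g_0\oplus\frak p_+$ is $\Ad(G_0)$-invariant, so that $(\Cal G\to A,\om)$ becomes a \emph{reductive} Cartan geometry and most assertions follow essentially by inspection. To see that $\Cal G\to A$ is a principal $G_0$-bundle, I would consider the map $\Cal G\to A=\Cal G\x_P(P/G_0)$ sending $u$ to the class of $(u,eG_0)$. Two elements $u,u'\in\Cal G$ have the same image if and only if there exists $g\in P$ with $u'=u\cdot g$ and $g^{-1}\cdot eG_0=eG_0$ in $P/G_0$; the latter forces $g\in G_0$. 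This identifies $A$ with the quotient $\Cal G/G_0$, and since $G_0\subset P$ is closed, the map $\Cal G\to A$ is a principal $G_0$-bundle.

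That $\om$ is a Cartan connection on $\Cal G\to A$ of type $(G,G_0)$ is then immediate from its being a Cartan connection on $\Cal G\to M$ of type $(G,P)$: $G_0$-equivariance is the restriction of $P$-equivariance, the fundamental vector fields of the $G_0$-action on $\Cal G$ are fundamental vector fields of the $P$-action (hence reproduced by $\om$), and the pointwise linear isomorphism property is unchanged. For the principal connection claim, the $\Ad(G_0)$-invariance of $\frak g_0$, with $\Ad(G_0)$-invariant complement $\frak g_-\oplus\frak p_+$, implies that the $\frak g_0$-component $\om_0$ of $\om$ is $G_0$-equivariant for the adjoint action; since $\om_0$ is also $\frak g_0$-valued and reproduces fundamental vector fields of $G_0$ by construction, it is a principal $G_0$-connection on $\Cal G\to A$. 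The general identification $TA\cong\Cal G\x_{G_0}(\frak g/\frak g_0)$, coming from the trivialization of $T\Cal G$ by $\om$, then equips $TA$ with the induced linear connection.

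Finally, as $G_0$-modules $\frak g/\frak g_0\cong\frak g_-\oplus\frak p_+$, so setting $L^-:=\Cal G\x_{G_0}\frak g_-$ and $L^+:=\Cal G\x_{G_0}\frak p_+$ produces a splitting $TA=L^-\oplus L^+$. It is parallel for the induced connection because it comes from a $G_0$-invariant decomposition, and both summands have rank $\dim M$ since $\frak g_-$ and $\frak p_+$ are isomorphic and $\dim\frak g_-=\dim(G/P)=\dim M$. To identify $L^+$ with the vertical bundle of $\pi$, I would factor $\pi$ as $A=\Cal G/G_0\to\Cal G/P=M$ and observe that a tangent vector at $[u]\in A$, represented through $\om$ by a class in $\frak g/\frak g_0$, is annihilated by $T\pi$ precisely when its representative lies in $\frak p/\frak g_0\cong\frak p_+$, which is exactly the condition defining $L^+$.

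I do not anticipate a real obstacle here; the entire argument is formal once one observes that $\Cal G\to A$ is $G_0$-principal and that the resulting geometry is reductive. The only point that requires a bit of care is the identification $A\cong\Cal G/G_0$ in the first step, which rests on the semidirect product structure $P=G_0\ltimes P_+$ and the corresponding stabilizer calculation in $P/G_0$.
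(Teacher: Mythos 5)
Your proposal is correct and follows essentially the same route as the paper's proof: identify $A$ with $\Cal G/G_0$ via $u\mapsto[(u,eG_0)]$, observe that the Cartan connection axioms for $(P,\frak p)$ restrict to those for $(G_0,\frak g_0)$, use the $G_0$-invariant decomposition $\frak g=\frak g_0\oplus(\frak g_-\oplus\frak p_+)$ to obtain the principal connection $\om_0$ and the splitting $TA=L^-\oplus L^+$, and identify $L^+$ with $\ker(T\pi)$ as the kernel of $\frak g/\frak g_0\to\frak g/\frak p$. Your stabilizer computation in $P/G_0$ just makes explicit what the paper summarizes by saying the fibers coincide with the $G_0$-orbits.
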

\begin{proof}
  Mapping $u\in\Cal G$ to the class of $(u,eG_0)$ in $\Cal G\x_P(P/G_0)$ is
  immediately seen to be surjective and its fibers coincide with the orbits of $G_0$
  on $\Cal G$. Hence one obtains an identification of $\Cal G/G_0$ with $A$, and it
  is well known that this makes the projection $\Cal G\to A$ into a $G_0$-principal
  bundle. The defining properties of $\om$ for the group $P$ and the Lie algebra
  $\frak p$ then imply the corresponding properties for the group $G_0$ and the Lie
  algebra $\frak g_0$, so $\om$ defines a Cartan connection on $\Cal G\to A$. 

  As a representation of $G_0$, we get
  $\frak g=\frak g_0\oplus(\frak g_-\oplus\frak p_+)$. This means that we have given
  a $G_0$-invariant complement to $\frak g_0$ in $\frak g$. Decomposing $\om$
  accordingly, the component $\om_0$ in $\frak g_0$ is $G_0$-equivariant, thus
  defining a principal connection on $\Cal G\to A$, which induces linear connections
  on all associated vector bundles. Moreover, since $\om$ is a Cartan connection on
  $\Cal G\to A$, we can identify $TA$ with the associated vector bundle 
$$
\Cal G\x_{G_0}(\frak g/\frak g_0)\cong\Cal G\x_{G_0}(\frak g_-\oplus\frak p_+).
$$
This readily implies both the existence of a natural connection and of a compatible decomposition
of $TA$ with $L^-=\Cal G\x_{G_0}\frak g_-$ and $L^+=\Cal G\x_{G_0}\frak p_+$. The
tangent map $T\pi:TA\to TM$ is induced by the projection $\frak g/\frak g_0\to\frak
g/\frak p$. Identifying $\frak g/\frak g_0$ with $\frak g_-\oplus\frak p_+$ the
kernel of this projection is $\frak p_+$, which shows that $L^+\subset TA$ coincides
with $\ker(T\pi)$.
\end{proof}

This result also gives us a basic supply of natural vector bundles on $A$, namely the
vector bundles associated to the principal bundle $\Cal G\to A$ via representations
of $G_0$. Moreover, the principal connection on that bundle coming from $\om_0$ gives
rise to an induced linear connection on each of these associated bundles. We will
denote all these induced connections by $D$. Given an associated bundle $E\to A$, we
can view $D$ as an operator $D:\Ga(E)\to\Ga(T^*A\otimes E)$. Of course the splitting
$TA=L^-\oplus L^+$ from \cref{prop2.3} induces an analogous splitting of
$T^*A$, which allows us to split $D$ into two partial connections $D=D^-\oplus D^+$.
Here $D^\pm:\Ga(E)\to\Ga((L^\pm)^*\otimes E)$. Viewing $D$ as a covariant derivative,
$D^\pm$ is defined by differentiating only in directions of the corresponding
subbundle of $TA$.

\subsection{Relations between natural vector bundles}\label{2.4}

Recall that the natural vector bundles for the parabolic geometry
$(p:\Cal G\to M,\om)$ are the associated vector bundles of the form
$\Cal VM=\Cal G\x_P\Bbb V$ for representations $\Bbb V$ of $P$. Throughout this
article, we will only consider the case that the center $Z(G_0)$ of the subgroup
$G_0\subset P$ acts diagonalizably on $\Bbb V$. Together with the fact that $G_0$ is
reductive, this implies that $\Bbb V$ is completely reducible as a representation of
$G_0$. One important subclass of natural bundles is formed by \textit{completely
  reducible} bundles that correspond to representations of $P$ on which the subgroup
$P_+\subset P$ acts trivially, which is equivalent to complete reducibility as a
representation of $P$.  On the other hand, there are \textit{tractor bundles} which
correspond to restrictions to $P$ of representations of $G$.

Any representation $\Bbb V$ of $P$ can naturally be endowed with a $P$-invariant
filtration of the form $\Bbb V=\Bbb V^0\supset\Bbb V^1\supset\dots\supset\Bbb V^N$ as
follows (see Section 3.2.12 of \cite{book}). The smallest component $\Bbb V^N$
consists of those elements, on which $\frak p_+$ acts trivially under the
infinitesimal action. The larger components are characterized iteratively by the fact
that $v\in\Bbb V^j$ if and only if it is sent to $\Bbb V^{j+1}$ by the action of any
element of $\frak p_+$. Then one defines the \textit{associated graded}
representation $\gr(\Bbb V):=\oplus_{i=0}^N\gr_i(\Bbb V)$ with $\gr_i(\Bbb V):=(\Bbb
V^i/\Bbb V^{i+1})$ and $\Bbb V^{N+1}=\{0\}$. By construction, this is a completely
reducible representation of $P$.

As an important special case, consider the restriction of the adjoint representation
of $G$ to $P$. Then it turns out that, up to a shift in degree, the canonical
$P$-invariant filtration is exactly the filtration $\{\frak g^i\}$ derived from the
$|k|$-grading of $\frak g$ as in \cref{2.1}. In particular, this implies that
$\frak g^2=[\frak p_+,\frak p_+]$ and similarly, the higher filtrations components form
the lower central series of $\frak p_+$. Using this it is easy to see that the
natural filtration on any representation $\Bbb V$ of $P$ has the property that
$\frak g^i\cdot\Bbb V^j\subset\Bbb V^{i+j}$ for all $i,j\geq 0$ under the
infinitesimal representation of $\frak p=\frak g^0$. This readily implies that there
is a natural action of the associated graded $\gr(\frak p)$ on $\gr(\mathbb{V}
)$, which is
compatible with the grading. Since the filtration of $\frak p$ is induced by the
(non-negative part of the) grading on $\frak g$, we can identify $\gr(\frak p)$ with
$\frak p$ via the inclusion of $\frak g_i$ into $\frak g^i$. Altogether, we get, for
each $i,j\geq 0$, bilinear maps $\frak g_i\x\gr_j(\Bbb V)\to\gr_{i+j}(\Bbb V)$, which
are $P$-equivariant (with trivial action of $P_+$) by construction.

As a representation of the subgroup $G_0\subset P$, the associated graded $\gr(\mathbb V)$ is 
isomorphic to $\Bbb V$. Indeed, we have observed above that $\mathbb V$ is completely reducible 
as a representation of $G_0$, so the same holds for each of the subrepresentations 
$\mathbb V^j\subset\mathbb V$. In particular, there always is a $G_0$-invariant complement 
$\mathbb V_j$ to the invariant subspace $\mathbb V^{j+1}\subset\mathbb V^j$ and we put 
$\mathbb V_N=\mathbb V^N$. By construction, we on the one hand get $\mathbb V\cong\oplus\mathbb V_j$ 
and on the other hand $\mathbb V_j\cong\mathbb V^j/\mathbb V^{j+1}$ which implies the claimed 
statement. Otherwise put, one can interpret the passage
from $\Bbb V$ to $\gr(\Bbb V)$ as keeping the restriction to $G_0$ of the $P$-action
on $\Bbb V$ and extending this by the trivial action of $P_+$ to a new action of $P$.

The construction of the associated graded has a direct counterpart on the level of
associated bundles. Putting $\Cal VM:=\Cal G\x_P\Bbb V\to M$, any of the filtration components $\Bbb V^i$ defines a smooth subbundle $\Cal V^iM:=\Cal G\x_P\Bbb V^i\to
M$. Thus $\Cal VM$ is filtered by the smooth subbundles $\Cal V^iM$ and we can form
the associated graded vector bundle $\gr(\Cal VM)=\oplus (\Cal V^iM/\Cal
V^{i+1}M)$. It is easy to see that this can be identified with the associated bundle
$\Cal G\x_P\gr(\Bbb V)$. However, the fact that $\Bbb V$ and $\gr(\Bbb V)$ are
isomorphic as representations of $G_0$ does not have a geometric interpretation
without making additional choices. Hence on the level of associated bundles, it is
very important to carefully distinguish between a filtered vector bundle and its
associated graded.

Any representation $\Bbb V$ of $P$ defines a representation of $G_0$ by
restriction. Hence denoting by $\pi:A\to M$ the bundle of Weyl structures, $\Bbb V$
also gives rise to a natural vector bundle over $A$ that we denote by
$\Cal VA:=\Cal G\x_{G_0}\Bbb V\to A$. Some information is lost in that process,
however, for example $\Cal G\x_{G_0}\Bbb V\cong\Cal G\x_{G_0}\gr(\Bbb V)$ for any
representation $\Bbb V$ of $P$. Next, sections of $\Cal VA\to A$ can be naturally
identified with smooth functions $\Cal G\to\Bbb V$ that are
$G_0$-equivariant. Similarly, sections of $\Cal VM\to M$ are in bijective
correspondence with smooth functions $\Cal G\to\Bbb V$, which are
$P$-equivariant. Thus we see that there is a natural inclusion of $\Ga(\Cal VM\to M)$
as a linear subspace of $\Ga(\Cal VA\to A)$. We will denote this by putting a tilde
over the name of a section of $\Cal VM\to M$ in order to indicate the corresponding
section of $\Cal VA\to A$. So both the sections of $\Cal VM\to M$ and of its
associated graded vector bundle can be interpreted as (different) subspaces of the
space of sections of $\Cal VA\to A$.

Now we can describe the relations of bundles and sections explicitly.

\begin{thm}\label{thm2.4}
  Let $(p:\Cal G\to M,\om)$ be a parabolic geometry of type $(G,P)$ and let $\pi:A\to
  M$ be the corresponding bundle of Weyl structures. Fix a representation $\Bbb V$ of
  $P$ and consider the corresponding natural bundles $\Cal VM=\Cal G\x_P\Bbb V\to M$
  and $\Cal VA=\Cal G\x_{G_0}\Bbb V\to A$. Then we have:

  (1) $\Cal VA$ can be naturally identified with the pullback bundle $\pi^*\Cal
  VM$. In particular, $L^-\cong\pi^*TM$ and $L^+\cong\pi^*T^*M$.

  (2) The operation $\si\mapsto\tilde\si$ identifies $\Ga(\Cal VM\to M)$ with the
  subspace of $\Ga(\Cal VA\to A)$ consisting of those sections $\tau$ for which
  $D^+_{\ph}\tau=-\phi\bullet\tau$ for all $\ph\in\Ga(L^+)$. Here $\bullet:L^+\x\Cal
  VA\to\Cal VA$ is induced by the infinitesimal representation $\frak p_+\x\Bbb
  V\to\Bbb V$. 

  In particular, for a completely reducible bundle $\Cal VM$,
  $\tilde\si=\pi^*\si$ and the image consists of all sections that are parallel for
  $D^+$.

  (3) Any section $s:M\to A$ of $\pi$ determines a natural pullback operator
  $s^*:\Ga(\Cal VA\to A)\to\Ga(\gr(\Cal VM)\to M)$. In particular, choosing $s$,
  $\si\mapsto s^*\tilde\si$ defines a map $\Ga(\Cal VM)\to\Ga(\gr(\Cal VM))$. This
  map is induced by a vector bundle isomorphism $\Cal VM\to\gr(\Cal VM)$ that
  coincides with the isomorphism determined by the Weyl structure corresponding to
  $s$ as in Section 5.1.3 of \cite{book}.
\end{thm}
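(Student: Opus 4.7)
The strategy throughout is to rewrite everything in terms of equivariant functions on $\Cal G$, where all three parts of the theorem become transparent. The basic setup is that $\Cal G\to A$ is a $G_0$-principal bundle by \cref{prop2.3} and $\Cal G\to M$ is a $P$-principal bundle that factors through $\Cal G\to A\to M$.

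For part (1), I would invoke the general principle that for a closed subgroup $G_0\subset P$ and a $P$-representation $\Bbb V$, one has a canonical vector bundle isomorphism $\pi^*(\Cal G\x_P\Bbb V)\cong\Cal G\x_{G_0}\Bbb V$ over $A=\Cal G/G_0$; concretely, $[u,v]_{G_0}\mapsto(\pi(u\cdot G_0),[u,v]_P)$, which is well-defined because $G_0\subset P$. The special cases $L^-\cong\pi^*TM$ and $L^+\cong\pi^*T^*M$ then reduce to the identifications $\frak g/\frak p\cong\frak g_-$ and $\frak p_+\cong(\frak g/\frak p)^*$ as $G_0$-modules, combined with the descriptions of $L^\pm$ in \cref{prop2.3}.

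For part (2), both $\si\in\Ga(\Cal VM)$ and $\tilde\si\in\Ga(\Cal VA)$ correspond to the same function $f:\Cal G\to\Bbb V$, but $\si$ requires $P$-equivariance while $\tilde\si$ only requires $G_0$-equivariance. Since $P=G_0\ltimes P_+$, the extra content is $P_+$-equivariance of $f$. The principal connection on $\Cal G\to A$ comes from $\om_0$, so its horizontal distribution consists of vectors whose $\om$-image lies in $\frak g_-\oplus\frak p_+$; in particular, for $Z\in\frak p_+$ the fundamental vector field $\om^{-1}(Z)$ is horizontal and projects to the $L^+$-vector corresponding to $Z$. Differentiating $f(u\cdot\exp(tZ))=\exp(-tZ)\cdot f(u)$ at $t=0$ gives $\om^{-1}(Z)\cdot f=-Z\cdot f$, which translates downstairs to $D^+_\ph\tau=-\ph\bullet\tau$. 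The converse uses that $P_+$ is exhausted by products of exponentials from the $\frak g_i$ ($i\geq 1$) as recalled in \cref{2.2}, so integrating the infinitesimal condition along each one-parameter subgroup recovers full $P_+$-equivariance. For a completely reducible $\Cal VM$, the action $\bullet$ vanishes, the condition becomes $D^+\tau=0$, and $\tilde\si=\pi^*\si$ is the direct translation of the isomorphism in (1) through equivariant functions.

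For part (3), the section $s$ corresponds to a $G_0$-equivariant Weyl section $\si_s:\Cal G_0\to\Cal G$, equivalently a reduction of $\Cal G$ to $G_0\subset P$. Pulling $\Cal VA=\Cal G\x_{G_0}\Bbb V$ back along $s$ realises it as $\si_s(\Cal G_0)\x_{G_0}\Bbb V\cong\Cal G_0\x_{G_0}\Bbb V$ over $M$. Using the $G_0$-module isomorphism $\Bbb V\cong\gr(\Bbb V)$ from \cref{2.4} together with the canonical identification $\Cal G_0\x_{G_0}\gr(\Bbb V)=(\Cal G/P_+)\x_{G_0}\gr(\Bbb V)=\Cal G\x_P\gr(\Bbb V)=\gr(\Cal VM)$ (which exploits the trivial $P_+$-action on $\gr(\Bbb V)$) defines $s^*:\Ga(\Cal VA)\to\Ga(\gr(\Cal VM))$. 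Tracing the composition $\si\mapsto s^*\tilde\si$ through equivariant functions shows that the $P$-equivariant $f$ representing $\si$ is first restricted along $\si_s$ and then projected into $\gr(\Bbb V)$; this is manifestly induced by a vector bundle morphism $\Cal VM\to\gr(\Cal VM)$, which is a bundle isomorphism because both steps are bijections on equivariant functions, and it agrees with the Weyl-structure splitting of Section 5.1.3 of \cite{book} by construction. The main obstacle is in part (2), namely verifying that the infinitesimal condition $D^+_\ph\tau=-\ph\bullet\tau$ actually integrates to full $P_+$-equivariance; this is handled by the iterated-exponential parameterisation of $P_+$ recalled at the end of \cref{2.2}, so that it suffices to check the integrated equation along each one-parameter subgroup generated by an element of $\frak g_i$.
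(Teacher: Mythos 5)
Your proposal is correct and follows essentially the same route as the paper's proof: all three parts are reduced to statements about $G_0$- versus $P$-equivariant functions on $\Cal G$, with the fundamental vector fields of $\frak p_+$ identified as $D$-horizontal lifts of $L^+$-directions in part (2), and the $G_0$-module isomorphism $\Bbb V\cong\gr(\Bbb V)$ driving part (3). The only (harmless) deviations are that you establish the bundle map in (3) is an isomorphism via bijectivity on spaces of sections rather than the paper's fibrewise injectivity argument, and that your sign bookkeeping in (2), yielding $D^+_\ph\tau=-\ph\bullet\tau$, is in fact slightly more careful than the paper's own write-up.
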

\begin{proof}
  (1) follows directly from the construction: Mapping a $G_0$-orbit in
  $\Cal G\x\Bbb V$ to the $P$-orbit it generates, defines a bundle map
  $\Cal VA\to\Cal VM$ with base map $\pi:A\to M$. This evidently restricts to a
  linear isomorphism in each fiber and hence defines an isomorphism
  $\Cal VA\to\pi^*\Cal VM$. The second statement follows from the well known facts that
  $TM\cong\Cal G\x_P(\frak g/\frak p)$ and $T^*M\cong\Cal G\x_P\frak p_+$ and the
  fact that $\frak g/\frak p\cong\frak g_-$ as a representation of $G_0$.

(2) Since $P$ is a semi-direct product, $P$-equivariancy of a function is equivalent
  to equivariancy under $G_0$ and $P_+$ and equivariancy under $P_+$ is equivalent to
  equivariancy for the infinitesimal action of $\frak p_+$. Hence for a
  $G_0$-equivariant function $f:\Cal G\to\Bbb V$, $P$-equivariancy is equivalent to
  the fact that for each $u\in\Cal G$ and $Z\in\frak p_+$ with fundamental vector
  field $\ze_Z$, we get $\ze_Z(u)\cdot f=-Z\cdot f(u)$. Here in the left hand side the
  vector field differentiates the function, while in the right hand side we use the
  infinitesimal representation of $\frak p_+$ on $\Bbb V$. Suppose that $u$ projects to $y\in A$. Then by definition, $\zeta_Z(u)$ is the horizontal lift with 
respect to $D$ of a tangent vector $\phi\in L^+_y\subset T_yA$. Hence $\zeta_Z(u)\cdot f$ represents 
$D_\phi\tau(y)=D^+_\phi\tau(y)$ in the, while $Z\cdot f(u)$ of course represents $\phi\bullet\tau(y)$.

  In the case of a completely reducible bundle, $\bullet$ is the zero map, so we see
  that our subspace coincides with the $D^+$-parallel sections. On the other hand,
  for any section $\si\in\Ga(\Cal VM)$, the pullback $\pi^*\si$ is constant along the
  fibers of $\pi$. Since we know from \cref{prop2.3} that $L^+$ is the
  vertical subbundle of $\pi$, this implies that $\pi^*\si=\tilde\si$.

(3) As we have noted already, for any representation $\Bbb V$ of $P$, the associated
  graded $\gr(\Bbb V)$ is isomorphic to $\Bbb V$ as a representation of $G_0$. Thus
  we conclude from (1) that we can not only identify $\Cal VA$ with the pullback of
  $\Cal VM$ but also with the pullback of the associated graded vector bundle
  $\gr(\Cal VM)$. Hence for a smooth section $s:M\to A$ and a point $x\in M$, we can
  naturally identify the fiber $\Cal V_{s(x)}A$ with the fiber over $x$ of $\gr(\Cal
  VM)$. This provides a pullback operator $s^*:\Ga(\Cal VA)\to\Ga(\gr(\Cal VM))$, so
  $\si\mapsto s^*\tilde\si$ defines an operator $\Ga(\Cal VM)\to\Ga(\gr(\Cal
  VM))$. This operator is evidently linear over $C^\infty(M,\Bbb R)$ and thus induced
  by a vector bundle homomorphism $\Cal VM\to\gr(\Cal VM)$ with base map
  $\id_M$. Suppose that for $\si\in\Ga(\Cal VM)$ and $x\in M$ we have
  $(s^*\tilde\si)(x)=0$. Then the function $f:\Cal G\to\Bbb V$ which corresponds to
  both $\si$ and $\tilde\si$ has to vanish along the fiber of $\Cal G\to A$ over
  $s(x)$. But $P$-equivariancy then implies that $f$ vanishes along the fiber of
  $\Cal G\to M$ over $x$, so $\si(x)=0$. This implies that our bundle map is
  injective in each fiber and since both bundles have the same rank it is an
  isomorphism of vector bundles.

  The standard description of the isomorphism $\Cal VM\to\gr(\Cal VM)$ induced by a
  Weyl structure is actually also phrased in the language of sections; Given the
  $P$-equivariant function $f:\Cal G\to\Bbb V$ corresponding to $\si$ and the
  $G_0$-equivariant section $\overline{s}:\Cal G_0\to\Cal G$ determined by $s$, one
  considers the $G_0$-equivariant function $f\o\overline{s}$. This describes a
  section of $\Cal G_0\x_{G_0}\Bbb V\cong\Cal G_0\x_{G_0}\gr(\Bbb V)$. Going through
  the identifications, it is clear that this coincides with the isomorphism described
  above.   
\end{proof}

\begin{remark}\label{rem2.4}
  (1) In principle, the pullback operation defined in part (3) of
  \cref{thm2.4} could also be interpreted as having values in $\Ga(\Cal VM\to M)$.
  Since $\Cal VA$ does not contain any information about the $\frak p_+$-action on
  $\Bbb V$, the interpretation with values in $\Ga(\gr(\Cal VM)\to M)$ seems much more
  natural to us.

  (2) The comparison to the standard description of Weyl structures in part (3) of
  the theorem also implies how the isomorphisms $\Cal VM\to\gr(\Cal VM)$ induced by
  sections $s$ of $A\to M$ are compatible with the affine structure on the space of
  these sections from \cref{prop2.2}, compare with Proposition 5.1.5 of
  \cite{book}. It is also easy to give a direct proof of this result in our
  picture. One just has to interpret the affine structure in terms of sections of
  $L^+\to A$ and then use the obvious solution of the differential equation
  $D^+_\ph\tau=-\ph\bullet\tau$ for appropriate sections $\ph$.
\end{remark}

\subsection{The Weyl connections}\label{2.5}
We next describe the interpretation of Weyl connections in our picture. At the same
time, we obtain a nice description of the Rho-corrected derivative associated to a
Weyl structure, that was first introduced in \cite{CDS}, see Section 5.1.9 of
\cite{book} for a discussion. The Rho-corrected derivative comes from a principal connection on $\Cal G$ determined by an equivariant section $\sigma:\Cal G_0\to\Cal G$. One takes the component of $\omega$ in $\frak p$ along the image of $\sigma$ and extends it equivariantly to a principal connection. The name "Rho-corrected derivative" comes from the explicit formula of this derivative in terms of Weyl connection an the Rho-tensor. To obtain our description, we first observe that the pullback
operation from part (3) of \cref{thm2.4} clearly extends to differential forms
with values in a natural vector bundle. Let $\Bbb V$ be a representation of $P$ with
corresponding natural bundles $\Cal VM\to M$ and $\Cal VA\to A$. Then one can pull
back a $\Cal VA$-valued $k$ form $\ph$ on $A$ along a section $s:M\to A$ to a
$\gr(\Cal VM)$-valued $k$-form $s^*\ph$ on $M$ in an obvious way.

\begin{thm}\label{thm2.5}
Let $\Bbb V$ be a representation of $P$ and let $\Cal VM\to M$ and $\Cal VA\to A$ be
the corresponding natural bundles. For $\si\in\Ga(\Cal VM)$ consider the natural lift
$\tilde\si\in\Ga(\Cal VA)$. For a smooth section $s:M\to A$ let $\nabla^s$ be the
Weyl connection of the Weyl structure determined by $s$. Let $\xi\in\frak X(M)$ be a
vector field with natural lift $\tilde\xi\in\Ga(L^-)$.

(1) The pullback $s^*D\tilde\si\in\Om^1(M,\gr(\Cal VM))$ of
$D\tilde\si\in\Om^1(A,\Cal VA)$ coincides with the image of
$\nabla^s\si\in\Om^1(M,\Cal VM)$ under the isomorphism $\Cal VM\to\gr(\Cal VM)$
induced by $s$ as in \cref{thm2.4}.

(2) The pullback $s^*(D^-_{\tilde\xi}\tilde\si)\in\Ga(\gr(\Cal VM))$ coincides with
the image of the Rho-corrected derivative $\nabla^{\Rho}_\xi\si\in\Ga(\Cal VM)$ under
the isomorphism induced by $s$ as in \cref{thm2.4}.
\end{thm}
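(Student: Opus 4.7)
The plan is to reduce both parts to parallel computations on the total space $\Cal G$, exploiting two observations. First, $\si\in\Ga(\Cal VM)$ and $\tilde\si\in\Ga(\Cal VA)$ are represented by the \emph{same} $\Bbb V$-valued function $f:\Cal G\to\Bbb V$ (the difference being $P$-equivariance versus only $G_0$-equivariance). Second, both connections arise from the $\frak g_0$-valued form $\om_0$: the canonical connection $D$ from the principal connection $\om_0$ on $\Cal G\to A$, and the Weyl connection $\nabla^s$ from the pullback $\overline{s}^*\om_0$ as a principal connection on $\Cal G_0\to M$, where $\overline{s}:\Cal G_0\to\Cal G$ is the Weyl structure corresponding to $s$. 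Under the isomorphism $\Cal VM\to\gr(\Cal VM)$ of \cref{thm2.4}(3), the section $\si$ represented by $f$ maps to the section of $\gr(\Cal VM)\cong\Cal G_0\x_{G_0}\Bbb V$ represented by $f\o\overline{s}$.

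For part (1), I would fix $\xi\in\frak X(M)$, pick $u_0\in\Cal G_0$ over a point $x\in M$, and set $u=\overline{s}(u_0)$. For any lift $\tilde\xi_0\in T_{u_0}\Cal G_0$ of $\xi_x$, the vector $T\overline{s}\cdot\tilde\xi_0\in T_u\Cal G$ lifts $Ts\cdot\xi_x$. Writing each horizontal lift (with respect to either connection) as ``total vector minus fundamental correction'' and using $\ze_Z\cdot f=-Z\cdot f(u)$ for $G_0$-equivariant $f$, the function values at $u$ representing $(D_{Ts\cdot\xi}\tilde\si)$ and at $u_0$ representing $(\nabla^s_\xi\si)$ both collapse to
$$
\tilde\xi_0\cdot(f\o\overline{s})+\overline{s}^*\om_0(\tilde\xi_0)\cdot f(u),
$$
using $\om_0(T\overline{s}\cdot\tilde\xi_0)=\overline{s}^*\om_0(\tilde\xi_0)$ and the chain rule $(T\overline{s}\cdot\tilde\xi_0)\cdot f=\tilde\xi_0\cdot(f\o\overline{s})$. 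Hence both sides produce the same element of $\gr(\Cal VM)_x$, as claimed.

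For part (2), I would decompose $Ts\cdot\xi_x=(Ts\cdot\xi_x)^-+(Ts\cdot\xi_x)^+\in L^-_{s(x)}\oplus L^+_{s(x)}$. Since $L^+=\ker T\pi$ by \cref{prop2.3} and $\pi\o s=\id_M$, the $L^-$-component must satisfy $T\pi\cdot (Ts\cdot\xi_x)^-=\xi_x$, so under the identification $L^-\cong\pi^*TM$ from \cref{thm2.4}(1) it equals $\tilde\xi(s(x))$. Therefore
$$
s^*(D\tilde\si)(\xi)=s^*(D^-_{\tilde\xi}\tilde\si)+s^*\bigl(D^+_{(Ts\cdot\xi)^+}\tilde\si\bigr),
$$
and \cref{thm2.4}(2) gives $D^+_\ph\tilde\si=-\ph\bullet\tilde\si$, so the second summand equals $-s^*\bigl((Ts\cdot\xi)^+\bullet\tilde\si\bigr)$. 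The component $(Ts\cdot\xi_x)^+$ is represented at $u$ by the $\frak p_+$-part of $\overline{s}^*\om(\tilde\xi_0)$, which by definition encodes the Rho tensor $\Rho(\xi)$ of the Weyl structure $s$. Combining this with part~(1) and recalling that $\nabla^{\Rho}_\xi\si=\nabla^s_\xi\si+\Rho(\xi)\bullet\si$ (modulo the sign conventions of Section~5.1.9 of \cite{book}) yields the claim after rearrangement.

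The main obstacle I expect is not conceptual but careful bookkeeping of signs and normalizations: the sign of the infinitesimal action $\bullet$ (which enters as $D^+_\ph\tilde\si=-\ph\bullet\tilde\si$ in \cref{thm2.4}(2)), the normalization of the Rho tensor as $\Rho=\overline{s}^*\om_+$, and the precise form of the Rho-corrected derivative from \cite{book}. These must be matched so that the rearrangement step in part~(2) produces $\nabla^{\Rho}_\xi\si$ on the nose rather than up to an ambiguous sign or factor.
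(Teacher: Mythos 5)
Your argument is correct, and for part (1) it is essentially the paper's proof: the paper identifies the $\bar s^*\om_0$-horizontal lift of $\xi$ in $\Cal G_0$ with the $\om_0$-horizontal lift of $T_xs\cdot\xi$ in $\Cal G$ along the image of $\bar s$, which is exactly what your explicit formula $\tilde\xi_0\cdot(f\o\overline{s})+\overline{s}^*\om_0(\tilde\xi_0)\cdot f(u)$ encodes once the fundamental-vector-field correction is written out. For part (2), however, you take a genuinely different route. The paper argues directly at the level of horizontal lifts: the $D$-horizontal lift of $\tilde\xi(s(x))$ is characterized by being sent to $\frak g_-$ by $\om$ and projecting to $\xi(x)$, which is precisely the defining property of the lift used in Section 5.1.9 of \cite{book} to \emph{define} $\nabla^{\Rho}$, so the two representing functions agree on $\bar s(\Cal G_0)$ with no further computation. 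You instead decompose $Ts\cdot\xi$ into $L^\pm$-components, convert the $D^+$-term via \cref{thm2.4}(2) into $-\Rho(\xi)\bullet\si$ (identifying the $L^+$-component with the Rho tensor, which is the content of \cref{prop2.6} — stated later in the paper but logically independent, so no circularity), and then invoke the derived formula $\nabla^{\Rho}_\xi\si=\nabla^s_\xi\si+\Rho(\xi)\bullet\si$ from \cite{book}. The trade-off: the paper's version is shorter and matches the definition of $\nabla^{\Rho}$ on the nose, with no sign bookkeeping; yours requires the sign conventions for $\bullet$, $\Rho=\bar s^*\om_+$, and the Rho-correction formula to be aligned (they are, with the paper's conventions), but in exchange it exhibits transparently how $D=D^-\oplus D^+$ restricted along $s$ splits the Weyl connection into the Rho-corrected derivative and the algebraic action of the Rho tensor, and it proves \cref{prop2.6} as a byproduct.
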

\begin{proof}
  Let $\bar s:\Cal G_0\to\Cal G$ be the $G_0$-equivariant section corresponding to
  $s$. For a point $x\in M$, an element $u\in\Cal G$ with $p(u)=x$ lies in the image
  of $\bar s$ if and only if $u$ projects to $s(x)\in A$. Assuming this, put
  $u_0=q(u)$ where $q:\Cal G\to\Cal G_0$ is the projection, so $u=\bar s(u_0)$. To
  compute $\nabla^s$, we need the horizontal lift $\hat\xi\in\frak X(\Cal G_0)$ of
  $\xi$ for the principal connection $\bar s^*\om_0$. This is characterized by the fact that $\hat\xi(u_0)$ projects onto $\xi(x)$ and that
  $\om(u)(T_{u_0}\bar s\cdot\hat\xi(u_0))$ has vanishing $\frak g_0$-component. But
  by construction $T_{u_0}\bar s\cdot\hat\xi(u_0)$ projects onto $T_xs\cdot\xi(x)$
  and so vanishing of the $\frak g_0$-component implies that this is the horizontal
  lift of $T_xs\cdot\xi(x)$ in $u$ corresponding to the principal connection $\om_0$
  that induces $D$. From this, (1) follows immediately.

  The argument for (2) is closely similar. By definition, $\tilde\xi(s(x))$ is the
  unique tangent vector that lies in $L^-$ and projects onto $\xi(x)$. The
  $D$-horizontal lift of this tangent vector in $u$, by construction, is mapped to
  $\frak g_-$ by $\om$ and projects onto $\xi(x)\in T_xM$. But this is exactly the
  characterizing property of the horizontal lift with respect to the principal
  connection $\ga^{\bar s}$ used in Section 5.1.9 of \cite{book} to define the
  Rho-corrected derivative. Thus the restriction of the $G_0$-equivariant function
  $\Cal G\to\Bbb V$ representing $D^-_{\tilde\xi}\tilde\si$ to $\bar s(\Cal G_0)$
  coincides with the restriction of the $P$-equivariant function representing
  $\nabla^{\Rho}_\xi\si$ and the claim follows from \cref{thm2.4}.
\end{proof}

\subsection{The universal Rho-tensor}\label{2.6}
Using the pullback of bundle valued forms, we can also describe the Rho tensor in our
picture. Recall that we use the convention of \cite{Weyl} and \cite{book} for
Rho tensors in the setting of general parabolic geometries, which differs by sign from
the standard conventions for projective and conformal structures. 

\begin{prop}\label{prop2.6}
Let us view the projection $TA\to L^+$ as $\Rho\in\Om^1(A,L^+)$. Then for each smooth
section $s:M\to A$, the pullback $s^*\Rho\in\Om^1(M,\gr(T^*M))$ coincides with the
Rho-tensor of the Weyl structure determined by $s$ as defined in Section 5.1.2 of
\cite{book}.
\end{prop}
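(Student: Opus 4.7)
The plan is to represent both $s^*\Rho$ and the standard Rho-tensor of the Weyl structure determined by $s$ by the same $G_0$-equivariant function on $\Cal G_0$, and then invoke the identifications of \cref{thm2.4}. The strategy is parallel to the proof of \cref{thm2.5}.

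First I would unpack $\Rho$ concretely. Under the identification $TA\cong\Cal G\x_{G_0}(\frak g_-\oplus\frak p_+)$ from \cref{prop2.3}, the decomposition $TA=L^-\oplus L^+$ is the vector space decomposition of $\frak g_-\oplus\frak p_+$, so the projection $\Rho:TA\to L^+$ just picks out the $\frak p_+$-component. Explicitly, given $y\in A$ and $X\in T_yA$, one picks $u\in\Cal G$ over $y$ and forms the horizontal lift $\tilde X\in T_u\Cal G$ of $X$ for the principal connection $\om_0$ on $\Cal G\to A$; the $G_0$-equivariant function $\Cal G\to\frak p_+$ representing $\Rho(X)\in L^+_y$ then takes the value $\om_+(\tilde X)$ at $u$.

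Next I would pull back along $s$ exactly as in the proof of \cref{thm2.5}. Let $\bar s:\Cal G_0\to\Cal G$ be the Weyl structure corresponding to $s$, pick $x\in M$, $\xi\in T_xM$, $u_0\in\Cal G_0$ over $x$, and set $u:=\bar s(u_0)$, so that $u$ projects to $s(x)$. The same argument used there shows that $T_{u_0}\bar s\cdot\hat\xi(u_0)\in T_u\Cal G$, with $\hat\xi$ the horizontal lift of $\xi$ for $\bar s^*\om_0$, has vanishing $\frak g_0$-component under $\om$ and projects to $T_xs\cdot\xi$; hence it is the $\om_0$-horizontal lift of $T_xs\cdot\xi$ to $\Cal G\to A$. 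Consequently, the $G_0$-equivariant function representing $s^*\Rho(\xi)$, restricted to $\bar s(\Cal G_0)$, is $u_0\mapsto(\bar s^*\om_+)(\hat\xi(u_0))$.

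Finally I would match this with the standard definition. According to Section 5.1.2 of \cite{book}, the Rho-tensor of $\bar s$ is exactly the section of $T^*M\otimes\gr(T^*M)$ corresponding to the horizontal $G_0$-equivariant form $\bar s^*\om_+\in\Om^1(\Cal G_0,\frak p_+)$, which evaluated on $\xi$ is represented by the same function $(\bar s^*\om_+)(\hat\xi)$. Under the identification $s^*L^+\cong\gr(T^*M)$ from \cref{thm2.4}(3), the two sections therefore coincide. The only real obstacle is bookkeeping of the chain $L^+\cong\pi^*T^*M\cong\pi^*\gr(T^*M)$ and the way \cref{thm2.4}(3) uses $s$ to turn an equivariant function on the fiber of $\Cal G\to A$ over $s(x)$ into an element of $\gr(T^*M)_x$; once this is pinned down, the verification is a direct transcription of the argument of \cref{thm2.5}.
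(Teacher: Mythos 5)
Your proposal is correct and follows essentially the same route as the paper: identify the $L^+$-component of $T_xs\cdot\xi$ with $\om_+$ evaluated on a lift through $\bar s$, and observe that this is exactly $\bar s^*\om_+$, which by definition induces the Rho-tensor. The only (harmless) difference is that you insist on using the $\om_0$-horizontal lift as in the proof of \cref{thm2.5}; this is not needed here, since the $\frak p_+$-component of $\om$ applied to a lift of a tangent vector of $A$ is independent of the choice of lift (different lifts differ by vectors mapped to $\frak g_0$), so the paper simply takes an arbitrary $\hat\xi\in T_{u_0}\Cal G_0$ projecting to $\xi$.
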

\begin{proof}
  Take a point $x\in M$, a tangent vector $\xi\in T_xM$ and consider
  $T_xs\cdot\xi\in T_{s(x)}A$. Choose a point $u\in\Cal G$ over $s(x)$ and consider
  its image $u_0=q(u)\in\Cal G_0$. Since $u$ projects to $s(x)$, it lies in the image
  of the $G_0$-equivariant section $\bar s:\Cal G_0\to\Cal G$ determined by $s$, so
  $u=\bar s(u_0)$. Taking a tangent vector $\hat\xi\in T_{u_0}\Cal G_0$, the
  tangent vector $T_{u_0}\bar s\cdot\hat\xi\in T_u\Cal G$, by construction, projects
  onto $T_xs\cdot\xi\in T_{s(x)}A$. But then, by definition, the $L^+$ component of
  $T_xs\cdot\xi$ is obtained by projecting
  $\om(u)^{-1}(\om_+(T_{u_0}\bar s\cdot\hat\xi))$ to $T_{s(x)}A$, where $\om_+$
  denotes the $\frak p_+$-component of the Cartan connection $\om$. But the
  Rho-tensor of $\bar s$ is defined as the $\gr(T^*M)$-valued form induced by the
  $G_0$-equivariant form $\bar s^*\om_+$, which completes the argument.
\end{proof}

\begin{definition}\label{def2.5}
  The form $\Rho\in\Om^1(A,L^+)$ defined by the projection $TA\to L^+$ is called the
  \textit{universal Rho-tensor} of the parabolic geometry $(p:\Cal G\to M,\om)$.
\end{definition}

\subsection{Curvature and torsion quantities}\label{2.7}
The curvature $K\in\Om^2(\Cal G,\frak g)$ of the Cartan connection $\om$ is defined
by $K(\xi,\eta)=d\om(\xi,\eta)+[\om(\xi),\om(\eta)]$ for $\xi,\eta\in\frak X(\Cal
G)$. Since $K$ is horizontal and $P$-equivariant, it can be interpreted as
$\ka\in\Om^2(M,\Cal AM)$, where $\Cal AM=\Cal G\x_P\frak g$ is the adjoint tractor
bundle. In the same way, we can interpret it as a two-form on $A$ with values in the
associated bundle $\Cal G\x_{G_0}\frak g$. Since $K$ is horizontal over $M$, it
follows that this two form vanishes upon insertion of one tangent vector from
$L^+\subset TA$. In view of the $G_0$-invariant decomposition $\frak g=\frak
g_-\oplus\frak g_0\oplus\frak p_+$ we can decompose that two-form further. To do
this, we denote by $\End_0(TA)$ the associated bundle $\Cal G\x_{G_0}\frak g_0$. Via
the adjoint action, this can naturally be viewed as a subbundle of $L(TA,TA)$.

\begin{definition}\label{def2.7}
  The components $T\in \Om^2(A,L^-)$, $W\in\Om^2(A,\End_0(TA))$ and
  $Y\in \Om^2(A,L^+)$ of the two form on $A$ induced by $K$ are called the
  \textit{universal torsion}, the \textit{universal Weyl curvature} and the
  \textit{universal Cotton-York tensor} of the parabolic geometry
  $(p:\Cal G\to M,\om)$.
\end{definition}

The following result follows directly from the definitions. 

\begin{prop}\label{prop2.7}
  For any smooth section $s:M\to A$, the pullbacks $s^*T\in\Om^2(M,\gr(TM))$, 
  $s^*W\in\Om^2(M,\End_0(TM))$ and $s^*Y\in\Om^2(M,\gr(T^*M))$ correspond to the
  components of the Cartan curvature $\ka\in\Om^2(M,\Cal AM)$ under the isomorphism
  $\Cal AM\cong\gr(\Cal AM)\cong \gr(TM)\oplus\End_0(TM)\oplus\gr(T^*M)$ induced by
  the Weyl structure determined by $s$. 
\end{prop}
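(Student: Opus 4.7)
The plan is to reduce the claim to the identity $K(u)(T\bar s\cdot\hat\xi,T\bar s\cdot\hat\eta)=(\bar s^*K)(u_0)(\hat\xi,\hat\eta)$, paralleling the arguments used in \cref{thm2.5} and \cref{prop2.6}. The comparison to the standard description of $\ka\in\Om^2(M,\Cal AM)$ via the Weyl structure then handles both sides simultaneously.

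First, I would fix $x\in M$ and tangent vectors $\xi,\eta\in T_xM$, pick $u_0\in\Cal G_0$ over $x$, and set $u:=\bar s(u_0)\in\Cal G$, where $\bar s:\Cal G_0\to\Cal G$ is the $G_0$-equivariant section corresponding to $s$. Then $u$ projects to $s(x)\in A$. For any lifts $\hat\xi,\hat\eta\in T_{u_0}\Cal G_0$ of $\xi,\eta$, the vectors $T_{u_0}\bar s\cdot\hat\xi$ and $T_{u_0}\bar s\cdot\hat\eta\in T_u\Cal G$ project to $T_xs\cdot\xi$ and $T_xs\cdot\eta\in T_{s(x)}A$.

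Next, by construction the two-form on $A$ induced by $K$ is computed at $s(x)$ by taking any lifts in $T_u\Cal G$ and evaluating $K$; horizontality of $K$ over $M$ makes this independent of the lift. Using the specific lifts above gives
$$K(u)(T_{u_0}\bar s\cdot\hat\xi,T_{u_0}\bar s\cdot\hat\eta)=(\bar s^*K)(u_0)(\hat\xi,\hat\eta)\in\frak g.$$
Decomposing via the $G_0$-invariant splitting $\frak g=\frak g_-\oplus\frak g_0\oplus\frak p_+$ and using the associated bundles $L^-=\Cal G\x_{G_0}\frak g_-$, $\End_0(TA)=\Cal G\x_{G_0}\frak g_0$, $L^+=\Cal G\x_{G_0}\frak p_+$, the three components of the left-hand side represent, by \cref{def2.7}, the values $s^*T(\xi,\eta)$, $s^*W(\xi,\eta)$ and $s^*Y(\xi,\eta)$ at $x$. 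On the other hand, $\bar s^*K\in\Om^2(\Cal G_0,\frak g)$ is horizontal and $G_0$-equivariant, so it represents $\ka\in\Om^2(M,\Cal AM)$ once $\Cal AM$ is identified with $\gr(\Cal AM)\cong\Cal G_0\x_{G_0}\frak g$ via the Weyl structure---this is the standard description recalled in Section 5.1.3 of \cite{book} and already exploited in \cref{thm2.4}(3). The $G_0$-decomposition of $\frak g$ matches the three summands of $\bar s^*K$ with the components of $\ka$ under $\gr(\Cal AM)\cong\gr(TM)\oplus\End_0(TM)\oplus\gr(T^*M)$.

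No substantive obstacle arises: the entire content is the identification of the pullback bundle isomorphisms $s^*L^-\cong\gr(TM)$, $s^*\End_0(TA)\cong\End_0(TM)$, $s^*L^+\cong\gr(T^*M)$ from \cref{thm2.4}(1) with the corresponding summands of the Weyl-structure isomorphism $\Cal AM\cong\gr(\Cal AM)$. The only step requiring care is verifying that these two identifications agree on each $G_0$-summand, which is an unraveling of definitions, exactly as explicitly remarked before the proposition statement.
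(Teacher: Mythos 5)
Your argument is correct and is exactly the unwinding of definitions that the paper has in mind: the paper offers no written proof beyond the remark that the result ``follows directly from the definitions,'' and your computation with the lifts $T_{u_0}\bar s\cdot\hat\xi$, the horizontality of $K$, and the identification of $\bar s^*K$ with $\ka$ under the Weyl-structure isomorphism of \cref{thm2.4}(3) is precisely that verification. Nothing further is needed.
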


These quantities are related to data associated to the Weyl structure determined by
$s$ in Section 5.2.9 of \cite{book} and these results can be easily recovered in the
current context.  

\medskip

On the level of $A$, the best way to interpret the components of the Cartan curvature
is via the torsion and curvature of the canonical connection $D$. This interpretation
will also be crucial for the analysis of the intrinsic geometric structure on $A$ in
\cref{3} below. To formulate the result, we need a bit more notation. The Lie bracket
is a $G$-equivariant, skew symmetric bilinear map $\frak g\x\frak g\to\frak g$. Now
we can restrict this to entries from $\frak g_-\oplus\frak p_+$ and then decompose the
values according to $\frak g=(\frak g_-\oplus\frak p_+)\oplus\frak g_0$, and the
result will still be $G_0$-equivariant. The first component induces a two-form on $A$
with values in $TA$ which we denote by $\{\ ,\ \}$. Similarly, the $\frak
g_0$-component of the bracket defines a two-form $\{\ ,\ \}_0$ on $A$ with values in
$\End_0(TA)$. Using this, we formulate

\begin{thm}\label{thm2.7}
  Let $A\to M$ be the bundle of Weyl structures associated to a parabolic geometry,
  and let $D$ be the canonical connection on $TA$. Let $\tau\in\Om^2(A,TA)$ be the
  torsion and $\rho\in\Om^2(A,L(TA,TA))$ be the curvature of $D$. Then we have:

  (1) The $TA$-valued two form $\tau+\{\ ,\ \}$ vanishes upon insertion of one
  section of $L^+$. On $\La^2L^-$, its components in $L^-$ and $L^+$ are the tensors
  $T$ and $Y$ from \cref{def2.7}, respectively.

  (2) The curvature $\rho$ has values in $\End_0(TA)\subset L(TA,TA)$. Moreover,
  $\rho+\{\ ,\ \}_0$ vanishes upon insertion of one section of $L^+$ and coincides
  with the tensor $W$ from \cref{def2.7} on $\La^2L^-$.
\end{thm}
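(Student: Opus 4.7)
The plan is to exploit the fact, established in \cref{prop2.3}, that $(\Cal G\to A,\om)$ is a reductive Cartan geometry of type $(G,G_0)$. In this picture, $D$ is the linear connection on $TA\cong\Cal G\x_{G_0}V$ with $V:=\frak g_-\oplus\frak p_+$ induced by the principal connection $\om_0$, while $\theta:=\om_-+\om_+$ plays the role of the associated solder form. The key tool is the Maurer--Cartan structure equation $K=d\om+\tfrac12[\om,\om]$, projected onto the $G_0$-invariant summands $\frak g_-$, $\frak g_0$, and $\frak p_+$ of $\frak g$, together with the horizontality of $K$ over $M$.

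A standard computation in the principal bundle picture shows that on $\om_0$-horizontal lifts $\tilde\xi,\tilde\eta$ of vector fields $\xi,\eta\in\frak X(A)$, the torsion $\tau(\xi,\eta)$ is represented by the $V$-valued function $d\theta(\tilde\xi,\tilde\eta)$ on $\Cal G$. Projecting the structure equation to $V$ and noting that on horizontal arguments the $\om_0$-dependent cross terms drop out while $[\frak g_0,\frak g_0]\cap V=\{0\}$, one obtains
\[
d\theta(\tilde\xi,\tilde\eta)=K_V(\tilde\xi,\tilde\eta)-[\theta(\tilde\xi),\theta(\tilde\eta)]_V,
\]
which on $A$ reads $\tau+\{\ ,\ \}=\ka_V$, where $\ka_V\in\Om^2(A,TA)$ is the form induced by the $V$-component of $K$. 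The identical procedure applied to the $\frak g_0$-component, using that $[\om_0,\theta]$ has no $\frak g_0$-part, yields $\rho+\{\ ,\ \}_0=\ka_{\frak g_0}$; and since $\frak g_0$ acts on $V$ by $\ad$ with image precisely $\End_0(TA)\subset L(TA,TA)$, the curvature $\rho$ automatically takes values in $\End_0(TA)$, which is the first claim of (2).

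To finish, I would show that $\ka_V$ and $\ka_{\frak g_0}$ both vanish whenever one argument is a section of $L^+$. Given $\phi\in L^+_y$, corresponding via $\theta$ at some $u\in q^{-1}(y)$ to an element $Z\in\frak p_+$, the $\om_0$-horizontal lift of $\phi$ at $u$ is characterised by $\om(\cdot)=Z$; but the fundamental vector field $\ze_Z(u)$ of the $P$-action on $\Cal G$ satisfies $\om(\ze_Z(u))=Z$ by the defining property of the Cartan connection, and thus coincides with this horizontal lift. Since $K$ is horizontal for $\Cal G\to M$, it annihilates fundamental vector fields of all elements of $\frak p=\frak g_0\oplus\frak p_+$; in particular $\ka_V$ and $\ka_{\frak g_0}$ vanish on $L^+$. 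On $\La^2L^-$, \cref{def2.7} identifies the $\frak g_-$-, $\frak p_+$-, and $\frak g_0$-components of $K$ with $T$, $Y$, and $W$ respectively, which yields the remaining identifications.

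The main obstacle is the algebraic bookkeeping when projecting $[\om,\om]$ onto the $G_0$-invariant summands: one has to check carefully that the only cross terms surviving in the $V$-projection on horizontal arguments are the ``internal'' $[\theta,\theta]_V$, and analogously that the $\frak g_0$-projection of $[\om,\om]$ on horizontal arguments reduces to $[\theta,\theta]_{\frak g_0}$. Once this is clean, the geometric content -- that $\om_0$-horizontal lifts of vectors in $L^+$ coincide with $\frak p_+$-fundamental vector fields -- reduces the vanishing statements immediately to the horizontality of $K$ over $M$, which is built into the definition of a Cartan connection on the $P$-bundle $\Cal G\to M$.
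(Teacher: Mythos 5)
Your proposal is correct and follows essentially the same route as the paper: both interpret $D$ through the reductive Cartan geometry $(\Cal G\to A,\om)$ of \cref{prop2.3}, compute $\tau$ and $\rho$ on $\om_0$-horizontal lifts via the $\frak g_-\oplus\frak p_+$- and $\frak g_0$-components of the structure equation for $K$, and obtain the vanishing on $L^+$ from horizontality of $K$ over $M$ combined with the identification of horizontal lifts of $L^+$-vectors with fundamental fields $\ze_Z$, $Z\in\frak p_+$ (an observation the paper records in the proof of \cref{thm2.4}). The only difference is presentational: you spell out the bookkeeping of the cross terms in $[\om,\om]$, which the paper leaves implicit in the phrase ``well known results on the affine connection induced by a reductive Cartan geometry.''
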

\begin{proof}
  This follows from well known results on the curvature and torsion of the affine
  connection induced by a reductive Cartan geometry. For $\xi\in\frak X(A)$, let
  $\xi^h\in\frak X(\Cal G)$ be the horizontal lift. Then $\om(\xi^h):\Cal G\to (\frak
  g_-\oplus\frak p_+)$ is the equivariant function corresponding to $\xi$. Taking a
  second field $\eta$, the bracket $[\xi^h,\eta^h]$ lifts $[\xi,\eta]$ so
  $\om_{\pm}([\xi^h,\eta^h])$ is the equivariant function representing $[\xi,\eta]$.

  (1) From these considerations and the definition of the exterior derivative,  it
  follows readily that $d\om_{\pm}(\xi^h,\eta^h)$ is the equivariant function
  representing $\tau(\xi,\eta)$. On the other hand, the component of
  $[\om(\xi^h),\om(\eta^h)]$ in $\frak g_-\oplus\frak p_+$ of course represents
  $\{\xi,\eta\}$, so the claim follows from the definition of the curvature of a
  Cartan connection.

  (2) It is also well known that $-\om_0([\xi^h,\eta^h])$ is the function representing
  $\rho(\xi,\eta)$. Since the $\frak g_0$-component of $[\om(\xi^h),\om(\eta^h)]$
  clearly represents $\{\xi,\eta\}_0$, the result again follows from the definition
  of the Cartan curvature. 
\end{proof}

\section{The natural almost bi-Lagrangian structure}\label{3}

From here on, we take a different point of view. We study the geometry on the total
space of the bundle of Weyl structures associated to a parabolic geometry from an
intrinsic point of view, using the relation to parabolic geometries and Weyl
structures as technical input. We shall see below that these structures become rather
exotic in the case of general gradings, so we will restrict to parabolic geometries
associated to $|1|$-gradings soon.

\subsection{The almost bi-Lagrangian structure and torsion freeness}\label{3.1}
Consider a parabolic geometry $(p:\Cal G\to M,\om)$ of some type $(G,P)$ and let
$\pi:A\to M$ the associated bundle of Weyl structures. As we have noted in
\cref{2.2}, the tangent bundle $TA$ decomposes as $L^-\oplus L^+$, where $L^-=\Cal
G\x_{G_0}\frak g_-$ and $L^+=\Cal G\x_{G_0}\frak p_+$. It is also well known that
$\frak g_-$ and $\frak p_+$ are dual as representations of $G_0$ via the restriction
of the Killing form of $\frak g$. Thus we obtain a non-degenerate pairing $B$ mapping
$L^-\x L^+$ to the trivial real line bundle $M\x\Bbb R$. This pairing can be extended
as either a skew symmetric or a symmetric bilinear bundle map on $TA$, thus defining
$\Om\in\Om^2(A)$ and $h\in\Ga(S^2T^*A)$. By construction, for each $y\in A$ both
values $\Om(y)$ and $h(y)$ are non-degenerate bilinear forms on $T_yA$ for which $L^+_y$ and $L^-_y$ are isotropic. The resulting
structure $(\Om,L^+,L^-)$ is called an \textit{almost bi-Lagrangian structure}.

In particular, $\Om\in\Om^2(A)$ is an almost symplectic structure and an obvious
first question is when this structure is symplectic, i.e.~when $d\Om=0$.
\begin{thm}\label{thm3.1}
   Let $(p:\Cal G\to M,\om)$ be a parabolic geometry of type $(G,P)$ and $\pi:A\to M$ its
  associated bundle of Weyl structures. Then the natural 2-form $\Om\in\Om^2(A)$ is closed if and only if $(G,P)$ corresponds to a $|1|$-grading
  and the Cartan geometry $(p:\Cal G\to M,\om)$ is torsion-free.
\end{thm}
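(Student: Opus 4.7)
The plan is to exploit the canonical connection $D$ on $TA$ from \cref{prop2.3}. Because the underlying pairing $B$ is the restriction of the Killing form and is therefore $G_0$-invariant, $\Om$ is $D$-parallel, so the standard formula for the exterior derivative of a $2$-form reduces to
\[
d\Om(\xi,\eta,\zeta)\;=\;\sum_{\mathrm{cyc}} \Om(\tau(\xi,\eta),\zeta),
\]
where $\tau$ is the torsion of $D$. By \cref{thm2.7}, $\tau=-\{\ ,\ \}$ everywhere, with a correction supported on $\La^2 L^-$ equal to $T+Y$ (universal torsion and universal Cotton--York tensor). What is left is a case analysis of $d\Om(\xi,\eta,\zeta)$ according to the $L^\pm$-type of the three arguments.

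For the sufficient direction, assume a $|1|$-grading and torsion-freeness. All brackets $\{U,V\}$ on $L^-\oplus L^+$ vanish identically, since $[\frak g_{\pm 1},\frak g_{\pm 1}]\subset\frak g_{\pm 2}=0$ and $[\frak g_{-1},\frak g_1]\subset\frak g_0$ projects trivially to $\frak g_-\oplus\frak p_+$. With $T=0$ the only case to check is all three arguments in $L^-$, where $d\Om(\xi,\eta,\zeta)=-\sum_{\mathrm{cyc}} B(\zeta,Y(\xi,\eta))$. I would dispose of this residual cyclic sum via the algebraic Bianchi identity: in the torsion-free case $\sum_{\mathrm{cyc}}[Y(\xi,\eta),\zeta]=0$ in $\frak g_0$; pairing this identity against the grading element $E\in\frak g_0$ through the Killing form, and using $[E,Y(\xi,\eta)]=Y(\xi,\eta)$ together with $\ad$-invariance of $B$, converts it into the scalar identity $\sum_{\mathrm{cyc}} B(\zeta,Y(\xi,\eta))=0$.

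For the converse, suppose $d\Om=0$. To exclude $|k|$-gradings with $k\ge 2$, take arguments $\xi\in\frak g_{-2}$ in $L^-$ and $\eta,\zeta\in\frak g_1$ in $L^+$. None of the cyclic terms then receives a contribution from $T+Y$ (each pair contains an $L^+$ argument), and a short manipulation using $\ad$-invariance of $B$ reduces the purely algebraic expression to $d\Om(\xi,\eta,\zeta)=-B(\xi,[\eta,\zeta])$. Since $\frak p_+$ is generated by $\frak g_1$ we have $[\frak g_1,\frak g_1]=\frak g_2\ne 0$, and the Killing pairing $\frak g_{-2}\otimes\frak g_2\to\Bbb R$ is non-degenerate, so $\xi,\eta,\zeta$ can be chosen making this non-zero---a contradiction. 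Hence $k=1$. With the $|1|$-grading now in hand, $\{\ ,\ \}$ vanishes identically and taking $\xi,\eta\in L^-$ and $\zeta\in L^+$ yields $d\Om(\xi,\eta,\zeta)=B(T(\xi,\eta),\zeta)$; vanishing for all $\zeta$ forces $T=0$ by non-degeneracy of $B$.

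The main obstacle I expect is the Cotton--York term in the sufficient direction: the vanishing of $\sum_{\mathrm{cyc}} B(\zeta,Y(\xi,\eta))$ is not transparent from the definitions and depends on combining the algebraic Bianchi identity in $\frak g_0$ with the pairing trick provided by the grading element. Everything else is direct algebra involving the graded bracket structure and the Killing form.
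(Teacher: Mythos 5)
Your overall strategy --- exploiting $D\Om=0$ together with the description of the torsion of $D$ from \cref{thm2.7}, then running a case analysis by $L^\pm$-type --- is exactly the paper's, and your converse direction is correct: the test elements $\xi\in\frak g_{-2}$, $\eta,\ze\in\frak g_1$ give a clean and valid way to see that the invariant $3$-form $(X,Y,Z)\mapsto B([X,Y],Z)$ does not vanish on $\frak g_-\oplus\frak p_+$ when $k>1$, and the reduction of the remaining component to $T=0$ matches the paper. (You should still record, as the paper does, that for a $|1|$-grading the pullback of the universal torsion $T$ along any Weyl structure is the torsion of the Cartan geometry, so $T=0$ is indeed equivalent to torsion-freeness of $(p:\Cal G\to M,\om)$.)

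The genuine gap is in the sufficiency direction, exactly at the point you flag as the main obstacle. The identity you invoke, $\sum_{\mathrm{cycl}}[Y(\xi,\eta),\ze]=0$ in $\frak g_0$, is not the Bianchi identity and is false in general. The actual Bianchi identity (equation (1.25) in Proposition 1.5.9 of \cite{book}) equates this cyclic bracket sum to further terms built from $\ka$ and its covariant derivative, and these do not vanish for a torsion-free geometry; the asserted identity would force the vanishing of the full $\frak g_0$-valued expression, whereas its $\frak g_0^{ss}$-part is governed by derivatives of $\ka_0$ and is non-zero in general (already for non-flat torsion-free projective structures). What the proof actually needs is only that the $\frak z(\frak g_0)$-component of the cyclic sum vanishes --- your pairing against the grading element $E$ extracts precisely this component, since $B(E,\frak g_0^{ss})=0$. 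Establishing that vanishing is where normality must enter: for a torsion-free normal geometry, $\ka_0$ is the lowest non-vanishing homogeneous component of $\ka$ and therefore takes values in $\frak g_0^{ss}$ (Theorem 4.1.1 of \cite{book}), so $\ka$ and its covariant derivatives take values in $\frak g_0^{ss}\oplus\frak g_1$ and the extra Bianchi terms contribute nothing to $\frak z(\frak g_0)$. Without this input your argument does not close; with it, your $E$-pairing step becomes the paper's observation that the $\frak z(\frak g_0)$-component of $[X,Z]$, for $X\in\frak g_{-1}$ and $Z\in\frak g_1$, is a non-zero multiple of $B(X,Z)E$, so that the surviving identity is exactly the vanishing of the complete alternation of $Y$.
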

\begin{proof}
  Let $D$ be the canonical connection on $TA$ from \cref{2.3}. Since $\Om$ is induced
  by a $G_0$-invariant pairing on $\frak g_-\oplus\frak p_+$, it satisfies
  $D\Om=0$. If $D$ were torsion-free, then $d\Om$ would coincide with the complete
  alternation of $D\Om$ and thus would vanish, too. In the presence of torsion, there
  still is a relation as follows. Expanding $D\Om=0$ by inserting vector fields
  $\xi,\eta,\ze\in\frak X(A)$, we obtain
$$
0=\xi\cdot\Om(\eta,\ze)-\Om(D_\xi\eta,\ze)-\Om(\eta,D_\xi\ze).
$$
Now one takes the sum of the right hand side over all cyclic permutations of the
arguments and uses skew symmetry of $\Om$ to bring all derivatives of vector fields
into the first component. Then one may use the definition of the torsion $\tau$ of
$D$ to rewrite $D_\xi\eta-D_\eta\xi$ as $[\xi,\eta]+\tau(\xi,\eta)$ and similarly for
other combinations of the fields. Then the terms in which one field differentiates
the value of $\Om$ together with the terms involving a Lie bracket add up to the
exterior derivative. One concludes that $D\Om=0$ implies
$$
d\Om(\xi,\eta,\ze)=\textstyle\sum_{\text{cycl}}\Om(\tau(\xi,\eta),\ze),
$$
where in the right hand side we have the sum over all cyclic permutations of the
arguments. Now let us assume that $(G,P)$ corresponds to a $|k|$-grading with
$k>1$. Then $L^-$ and $L^+$ decompose into direct sums of subbundles according to the
grading of $\frak g_-$ and $\frak p_+$, respectively. Now we take $\xi\in L^-$ of
degree $-1$, $\eta\in L^+$ of degree $i>1$ and $\ze\in L^+$ of degree $i-1$. Then by
\cref{thm2.7}, $\tau$ coincides with $\{\ ,\ \}$ on any two of these three
fields. The restriction of $d\Om$ to the subbundles corresponding to these three
degrees is induced by the trilinear map
$\frak g_{-1}\x \frak g_i\x\frak g_{i-1}\to\Bbb R$ given by
$(X,Y,Z)\mapsto \sum_{\text{cycl}}B([X,Y],Z)$, where $B$ denotes the Killing form of
$\frak g$. But $B([X,Y],Z)$ is already totally skew, so $d\Om=0$ would imply that
$B([X,Y],Z)=0$ for all elements of the given homogeneities. But non-degeneracy of $B$
shows that $B([X,Y],Z)=0$ for all $Z$ implies $[X,Y]=0$ while for $Y\in\frak g_i$,
the equation $[X,Y]=0$ for all $X\in\frak g_{-1}$ implies $Y=0$, see Proposition
3.1.2 in \cite{book}.

Thus we may assume from now on that $(G,P)$ corresponds to a $|1|$-grading. In this
case, the bracket $\{\ ,\ \}$ is identically zero, so by \cref{thm2.7}, $\tau$
vanishes upon insertion of one element from $L^+$. Hence we see that $d\Om$ vanishes
upon insertion of two elements of $L^+$. Decomposing $\La^3T^*A$ according to
$TA=L^-\oplus L^+$, the only potentially non-zero components of $d\Om$ thus are the
ones in $\La^3(L^-)^*$ and in $\La^2(L^-)^*\otimes (L^+)^*$.

Now if $\xi,\eta\in\Ga(L^-)$ and $\ze\in\Ga(L^+)$, then we simply obtain
$d\Om(\xi,\eta,\ze)=\Om(T(\xi,\eta),\ze)$, where $T$ is defined in \cref{def2.7}. Non-degeneracy of $\Om$ shows that this
vanishes for all $\xi,\eta,\ze$ if and only if $T=0$. This shows that vanishing of
$T$ is a necessary condition for $\Om$ being closed. In the case of a $|1|$-grading, the
pullback of $T$ along a Weyl structure as in \cref{prop2.7} is independent of
the Weyl structure and gives the torsion of the Cartan geometry $(p:\Cal G\to
M,\om)$.

To complete the proof, we thus have to show that (still in the case of a
$|1|$-grading) vanishing of $T$ implies that the component of $d\Om$ in
$\La^3(L^-)^*$ vanishes identically. As above, \cref{thm2.7} shows that this
component is given by the sum of $\Om(Y(\xi,\eta),\ze)$ over all cyclic permutations
of its arguments.  But by construction, this is simply the complete alternation of
$Y$, viewed as a section of $\La^2L^+\otimes L^+$ via the identification
$(L^-)^*\cong L^+$. In terms of the Cartan geometry $(p:\Cal G\to M)$, it thus
suffices to show that the component $\ka_+$ of the Cartan curvature in $\frak p_+$
always has trivial complete alternation.

To do this, we first observe that for a $|1|$-graded Lie algebra $\frak g$, the
subalgebra $\frak g_0$ always splits into its center $\frak z(\frak g_0)$, which has
dimension one, and a semisimple part $\frak g_0^{ss}$. For a torsion-free geometry,
the component $\ka_0$ of $\ka$ with values in $\frak g_0$ is the lowest non-vanishing
homogeneous component of $\ka$, which implies that its values have to lie in $\frak
g_0^{ss}$, compare with Theorem 4.1.1 in \cite{book}. Thus we conclude that, viewed
as a function $\Cal G\to \frak g$, $\ka$ has values in the subspace $\frak
g_0^{ss}\oplus\frak g_1$.

Now we can apply the Bianchi identity in the form of equation (1.25) in Proposition
1.5.9 of \cite{book}. This contains four terms, three of which are evaluations of the
function $\ka$ or its derivative along some vector field, so these have values in
$\frak g_0^{ss}\oplus\frak g_1$, too. Formulated in terms of functions, the Bianchi
identity thus implies that for $X_1,X_2,X_3$ the cyclic sum over the arguments of
$[X_1,\ka(\om^{-1}(X_2),\om^{-1}(X_3))]$ has trivial component in
$\frak z(\frak g_0)$. Now we can replace the $X_i$ by their components in $\frak g_-$
without changing the $\frak g_0$-component of
$[X_1,\ka(\om^{-1}(X_2),\om^{-1}(X_3))]$, which in addition depends only on the
$\frak g_1$-component of $\ka$. Now it is well known that for $X\in\frak g_{-1}$ and
$Z\in\frak g_1$, the component of $[X,Z]$ in $\frak z(\frak g_0)$ is a non-zero
multiple of $B(X,Z)$, where $B$ denotes the Killing form. But this exactly shows
that, up to a non-zero factor, the $\frak z(\frak g_0)$-component of
$\sum_{\text{cycl}}[X_1,\ka(\om^{-1}(X_2),\om^{-1}(X_3))]$ represents the action of
the complete alternation of $\ka_+$ on the three vector fields corresponding to the
$X_i$. Thus this complete alternation vanishes identically.
\end{proof}

\begin{remark}\label{rem3.1}
  (1) The failure of closedness of $\Om$ for $|k|$-gradings with $k>1$ can be
  described more precisely. The map $(X,Y,Z)\mapsto B([X,Y],Z)$ that shows up in the
  proof defines a $G$-invariant element in $\La^3\frak g^*$ and hence a bi-invariant
  $3$-form on $G$. Restricting this to $\La^3(\frak g_-\oplus\frak p_+)^*$, one
  obtains a $G_0$-invariant trilinear form, which is non-zero provided that
  $k>1$. This in turn induces a natural $3$-form on each manifold endowed with a
  Cartan Geometry of type $(G,G_0)$. On a bundle of Weyl structures, the proof of
  \cref{thm3.1} shows that this form always is a component of $d\Om$.

\smallskip

  (2) The parabolic geometries corresponding to $|1|$-gradings form a very
interesting class of structures. For a $|1|$-grading, the subalgebras $\frak g_-$ and
$\frak p_+$ become Abelian, whence the name ``Abelian parabolic geometries'' is
sometimes used for these structures. The classification of $|1|$-gradings of simple
Lie algebras is well known from the theory of Hermitian symmetric spaces, which
motivates the more common name ``AHS structures'' where AHS is shorthand for ``almost
Hermitian symmetric''.

  Suppose that $(G,P)$ corresponds to a $|1|$-grading on $\frak g$. As noted in
  \cref{2.1}, the underlying structure $p_0:\Cal G_0\to M$ of a Cartan geometry
  $(p:\Cal G\to M)$ simply becomes a reduction of the linear frame bundle of $M$ to
  the structure group $G_0\subset GL(\frak g_{-1})$. Thus AHS structures are a
  special class of G-structures, whose relevance is explained by the classification
  results by S.~Kobayashi and T.~Nagano in \cite{KN1}. They prove that these are the
  only structures for which the group acts irreducibly, and which have the property
  that any automorphism is determined by a finite jet in a point but not by the
  one-jet in a point. In fact, automorphisms are always determined by the two-jet in a
  point and the equivalent canonical Cartan geometry of type $(G,P)$ is the most
  effective description for these structures.

  The torsion-freeness condition that shows up in \cref{thm3.1} has a natural
  interpretation in the language of $G_0$-structures. As noted in the proof, the
  torsion $T$ associated to a Weyl structure in this case is independent of the Weyl
  structure. It turns out that this coincides with the intrinsic torsion of the
  $G_0$-structure (i.e.~the component of the torsion that is independent of the
  choice of connection). Thus torsion-freeness of the Cartan geometry corresponds to
  the usual notion of integrability in the language of $G_0$-structures.

\smallskip

(3) For some types of AHS-structures, torsion-freeness implies local
flatness. Locally flat structures can be equivalently be characterized as being
obtained from local charts with values in the homogeneous model $G/P$, for which the
transition functions are given by restrictions of left actions of elements of
$G$. This case anyway plays a very important role in the results we are going to prove,
so our results are also relevant to these types of AHS structures.
\end{remark}

\subsection{Local frames}\label{3.2}
From this point on, we restrict the discussion to torsion-free geometries of some
type $(G,P)$ that corresponds to a $|1|$-grading of $\frak g$, so that by
\cref{thm3.1} $\Om$ defines a symplectic structure on $A$. Recall from \cref{2.4}
that any vector field $\xi\in\frak X(M)$ determines a section $\tilde\xi\in\Ga(L^-)$,
and since $\frak g_-$ is a completely reducible representation in the $|1|$-graded
case, we get $D^+\tilde\xi=0$. Similarly, a one-form $\al\in\Om^1(M)$ defines a
section $\tilde\al\in\Ga(L^+)$ such that $D^+\tilde\al=0$. We further know that
$L^-\cong\pi^*TM$ and $L^+=\pi^*T^*M$. This implies that starting with local frames
for $TM$ and $T^*M$ defined on some open set $U\subset M$, the lifts form local
frames for $L^\pm$ defined on $\pi^{-1}(U)$, so together, these form a local frame
for $TA$. One may in particular use dual local frames for $TM$ and $T^*M$ in which
case the resulting local frame for $TA$ is nicely adapted to the almost bi-Lagrangian
structure and thus both to $\Om$ and to $h$. As a preparation for the following
computations, we next compute the Lie brackets of such sections.

\begin{prop}\label{prop3.2}
  Consider a torsion-free AHS structure $(p:\Cal G\to M,\om)$ and let $\pi:A\to M$ be
  the corresponding bundle of Weyl structures. Let $\xi,\eta\in\frak X(M)$ be vector
  fields and $\al,\be\in\Om^1(M)$ be one-forms on $M$ and consider the corresponding
  sections $\tilde\xi,\tilde\eta\in\Ga(L^-)$ and $\tilde\al,\tilde\be\in\Ga(L^+)$.

  Then for the Lie brackets on $A$, we get $[\tilde\al,\tilde\be]=0$ and
  $[\tilde\xi,\tilde\al]=D_{\tilde\xi}\tilde\al\in\Ga(L^+)$. Finally, the
  $L^-$-component of $[\tilde\xi,\tilde\eta]$ coincides with
  $\widetilde{[\xi,\eta]}$, while its $L^+$-component coincides with
  $-Y(\tilde\xi,\tilde\eta)$, see \cref{def2.7}.
\end{prop}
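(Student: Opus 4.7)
The plan is to exploit the torsion identity $[\tilde X,\tilde Y] = D_{\tilde X}\tilde Y - D_{\tilde Y}\tilde X - \tau(\tilde X,\tilde Y)$ for the canonical connection $D$, combined with \cref{thm2.7}, the torsion-freeness assumption, and the abelianness of $\frak g_\pm$ in the $|1|$-graded setting. Two algebraic facts drive everything. First, in a $|1|$-grading one has $[\frak g_{-1},\frak g_{-1}]\subset\frak g_{-2}=0$, $[\frak g_1,\frak g_1]\subset\frak g_2=0$, and $[\frak g_{-1},\frak g_1]\subset\frak g_0$, so the $TA$-valued form $\{\ ,\ \}$ of \cref{thm2.7} (which picks out the $\frak g_-\oplus\frak p_+$ part of the Lie bracket) vanishes identically on $L^-\oplus L^+$. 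Hence by \cref{thm2.7}(1) and torsion-freeness (so $T=0$), the torsion $\tau$ of $D$ vanishes upon insertion of any section of $L^+$, while $\tau|_{\La^2 L^-}=Y\in\Om^2(A,L^+)$. Second, the paragraph preceding the statement records $D^+\tilde\xi=0$ and $D^+\tilde\al=0$, so any derivative $D_\ph$ with $\ph\in\Ga(L^+)$ annihilates the lifts $\tilde\xi$ and $\tilde\al$.

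Parts (1) and (2) then fall out immediately. For (1), both $\tilde\al,\tilde\be$ lie in $L^+$, so $D_{\tilde\al}\tilde\be=D_{\tilde\be}\tilde\al=0$ and $\tau(\tilde\al,\tilde\be)=0$, whence the torsion identity forces $[\tilde\al,\tilde\be]=0$. For (2), $\tilde\al\in\Ga(L^+)$ gives $D_{\tilde\al}\tilde\xi=0$, while $\tau(\tilde\xi,\tilde\al)=0$ by the $L^+$-insertion property; thus $[\tilde\xi,\tilde\al]=D_{\tilde\xi}\tilde\al$. Since $D$ preserves the splitting $TA=L^-\oplus L^+$ (it arises from the $G_0$-invariant decomposition $\frak g/\frak g_0=\frak g_-\oplus\frak p_+$), this derivative takes values in $L^+$.

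For part (3), the derivatives $D_{\tilde\xi}\tilde\eta$ and $D_{\tilde\eta}\tilde\xi$ lie in $\Ga(L^-)$ while $\tau(\tilde\xi,\tilde\eta)=Y(\tilde\xi,\tilde\eta)\in\Ga(L^+)$, so the $L^+$-component of $[\tilde\xi,\tilde\eta]$ is exactly $-Y(\tilde\xi,\tilde\eta)$. To identify the $L^-$-component I would pass to $M$: by construction $\tilde\xi$ is $\pi$-related to $\xi$ and $\tilde\eta$ to $\eta$, hence $[\tilde\xi,\tilde\eta]$ is $\pi$-related to $[\xi,\eta]$; since $L^+=\ker T\pi$ by \cref{prop2.3} and $T\pi$ restricts to a fiberwise isomorphism $L^-\to\pi^*TM$, the $L^-$-component of $[\tilde\xi,\tilde\eta]$ is the unique section of $L^-$ covering $[\xi,\eta]$, which is $\widetilde{[\xi,\eta]}$. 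The only real care needed is in verifying that $\{\ ,\ \}$ vanishes on all of $L^-\oplus L^+$ and in correctly extracting the residual $Y$-contribution from \cref{thm2.7}; the remainder is bookkeeping against the torsion identity.
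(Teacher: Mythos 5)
Your proof is correct and follows essentially the same route as the paper's: both rest on the torsion identity for $D$, the vanishing of $\{\ ,\ \}$ in the $|1|$-graded case combined with \cref{thm2.7} and torsion-freeness, the $D^+$-parallelism of the lifted sections, and $\pi$-relatedness for the $L^-$-component of $[\tilde\xi,\tilde\eta]$. No gaps.
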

\begin{proof}
  By definition of the torsion
  \begin{equation}\label{tors}
    \tau(X,Z)=D_XZ-D_ZX-[X,Z]
  \end{equation}
  for all $X,Z\in\frak X(A)$. If at least one of the two fields is a section of
  $L^+$, then the left hand side of \eqref{tors} vanishes by
  \cref{thm2.7}. Moreover, all the sections coming from $M$ are parallel in
  $L^+$-directions. This immediately shows that $[\tilde\al,\tilde\be]=0$ and
  $0=D_{\tilde\xi}\tilde\al-[\tilde\xi,\tilde\al]$. In view of torsion-freeness,
  \cref{thm2.7} further tells us that
  $\tau(\tilde\xi,\tilde\eta)=Y(\tilde\xi,\tilde\eta)\in\Ga(L^+)$. Inserting
  $X=\tilde\xi$ and $Z=\tilde\eta$ into the right hand side of \eqref{tors}, the
  first two terms are sections of $L^-$, so the claim about the $L^+$-component
  of $[\tilde\xi,\tilde\eta]$ follows. Finally, since $\tilde\xi$ and $\tilde\eta$
  are lifts to $A$ of $\xi$ and $\eta$, the bracket
  $[\tilde\xi,\tilde\eta]\in\frak X(A)$ is a lift of $[\xi,\eta]$. Since
  $\widetilde{[\xi,\eta]}$ is the unique section of $L^-$ that projects onto
  $[\xi,\eta]$, it has to coincide with the $L^-$-component of that lift.
\end{proof}

In particular, we see that, while $L^+$ always defines an involutive distribution,
$L^-$ is only involutive if the curvature component $Y$ from \cref{def2.7} vanishes
identically. From the interpretation via the Cartan curvature, one easily concludes
that this is equivalent to local vanishing of the Cartan curvature. Thus our
structure is bi-Lagrangian (in the sense that both subbundles $L^\pm$ are integrable)
if and only if the initial parabolic geometry is locally flat.

\subsection{The canonical metric}\label{3.3}
We next study the pseudo-Riemannian metric $h$ induced on $A$. By
definition, the subbundles $L^\pm$ are isotropic for $h$, so this metric always has
split signature $(n,n)$, where $n=\dim(M)$. Our main next aim will be to prove that
the metric $h$ is always Einstein. As a first step in this direction, consider the
canonical connection $D$ and its curvature $\rho\in\Om^2(A,\End_0(TA))$ as described
in \cref{2.7}.

\begin{lemma}\label{lem3.3}
  The Ricci-type contraction of $\rho$ is a non-zero multiple of $h$.
\end{lemma}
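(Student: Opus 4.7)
I would work fiberwise at each $y\in A$ and compute
$\operatorname{Ric}(Y,Z)=\operatorname{tr}\bigl(X\mapsto\rho(X,Y)Z\bigr)$
block by block with respect to $T_yA=L^-_y\oplus L^+_y$. Fix dual bases $\{e_a\}\subset L^-_y\cong\frak g_{-1}$ and $\{f^a\}\subset L^+_y\cong\frak g_1$ satisfying $B(e_a,f^b)=\delta_a^b$ for $B$ the Killing form. Since $\rho$ takes values in $\End_0(TA)$, it preserves the splitting $L^-\oplus L^+$, and the trace decomposes as a sum of four block contributions. Reading off the components of $\rho$ from \cref{thm2.7}(2) together with the identities $[\frak g_{\pm1},\frak g_{\pm1}]=0$ gives $\rho|_{\La^2L^-}=W$, $\rho|_{\La^2L^+}=0$, and $\rho(\xi,\eta)=-[\xi,\eta]\in\frak g_0$ on mixed arguments (acting on the remaining factor of $TA$ via the adjoint representation). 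In particular the $(L^+,L^+)$-block of $\operatorname{Ric}$ is zero, matching $h|_{L^+\times L^+}=0$.

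\textbf{Mixed blocks.} For $Y\in L^-$ and $Z\in L^+$ only the trace over $L^+$ survives, yielding $\operatorname{Ric}(Y,Z)=-\sum_a B\bigl([[f^a,Y],Z],e_a\bigr)$. Applying Jacobi to the triple $(f^a,Y,Z)$---the term $[[Z,f^a],Y]$ vanishes because $[\frak g_1,\frak g_1]=0$---rewrites the inner double bracket as $[f^a,[Y,Z]]$, and invariance of $B$ then collapses the sum to $B\bigl(\sum_a[f^a,e_a],[Y,Z]\bigr)$. The element $\sum_a[e_a,f^a]\in\frak g_0$ is $\Ad(G_0)$-invariant, as a short calculation using contragredience of the $G_0$-actions on $\frak g_{\pm1}$ shows, hence equals $cE$ for the grading element $E$ and a nonzero constant $c$ depending only on $\frak g$. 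Using $[E,X]=jX$ on $\frak g_j$ then produces $\operatorname{Ric}(Y,Z)=c\,h(Y,Z)$. The same calculation on $L^+\times L^-$ yields the identical constant $c$.

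\textbf{The $(L^-,L^-)$-block.} Here the computation reduces to
$\operatorname{Ric}(Y,Z)=B\bigl(Z,\sum_a[f^a,W(e_a,Y)]\bigr),$
a Ricci-type trace of the Weyl curvature $W=\ka_0$. I would show this vanishes by identifying the map $Y\mapsto\sum_a[f^a,W(e_a,Y)]$ with the appropriate homogeneous component of Kostant's codifferential $\partial^*\ka$---the abelianness of $\frak g_-$ in a $|1|$-grading eliminates the ``bracket correction'' term that would otherwise appear in the formula for $\partial^*$---and then invoking the normality $\partial^*\ka=0$ built into the definition of a parabolic geometry (cf.\ Section~3.1.11 of \cite{book}). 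Combining the four blocks yields $\operatorname{Ric}=c\,h$.

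\textbf{Main obstacle.} The $(L^+,L^+)$-block and the two mixed blocks are routine consequences of \cref{thm2.7}(2) and the Lie-algebraic structure of a $|1|$-graded simple Lie algebra, and present no real difficulty beyond careful bookkeeping with Jacobi and invariance. The genuinely non-formal step is the vanishing on $L^-\times L^-$, for which one must match the explicit trace $\sum_a[f^a,W(e_a,\cdot)]$ with a component of $\partial^*\ka$ before the normality hypothesis can be brought to bear.
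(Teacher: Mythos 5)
Your proposal is correct, and it follows the same overall skeleton as the paper's proof: decompose the Ricci-type contraction into blocks using the splitting $TA=L^-\oplus L^+$, note that $\End_0(TA)$ preserves this splitting so the $(L^+,L^+)$-block vanishes for free, and then treat the mixed block and the $(L^-,L^-)$-block separately via \cref{thm2.7}. The two non-trivial sub-steps, however, are handled by genuinely different arguments. For the mixed block, the paper expands the same double-bracket trace but concludes by observing that $X\mapsto\sum_i[e_i,[X,e^i]]$ is a $\frak g_0$-equivariant endomorphism of the irreducible module $\frak g_{-1}$ and hence a multiple of the identity; your Jacobi-plus-grading-element argument (identifying $\sum_a[e_a,f^a]$ with a multiple of $E$ via the one-dimensionality of $\frak z(\frak g_0)$) reaches the same conclusion with the bonus of an explicit, visibly nonzero constant. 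The more substantive divergence is in the $(L^-,L^-)$-block: the paper quotes the fact that, for torsion-free geometries, the Weyl curvature takes values in an irreducible $G_0$-module occurring with multiplicity one in $\La^\bullet\frak g_-^*\otimes\frak g$, so that \emph{every} contraction of $W$ vanishes; you instead recognize the one specific trace $\sum_a[f^a,W(e_a,\cdot)]$ as the $\frak g_1$-component of $\partial^*\ka$ (the bracket-correction term in $\partial^*$ indeed drops out since $\frak p_+$ is abelian) and kill it by normality alone. Your route is more economical in that it uses only normality rather than the harmonicity of the lowest curvature component, at the price of having to verify the identification with Kostant's codifferential carefully, including the duality conventions relating $\La^2(\frak g/\frak p)^*$ to $\La^2\frak p_+$; both arguments are sound.
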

\begin{proof}
  By \cref{thm2.7}, $\rho+\{\ ,\ \}_0$ vanishes upon insertion of one section of
  $L^+$ and coincides with $W$ on $\La^2(L^-)^*$. Decomposing $\La^2TA^*$ according
  to $TA=L^+\oplus L^-$, we conclude that the component of $\rho$ in $\La^2(L^+)^*$
  vanishes, its component in $(L^-)^*\otimes (L^+)^*$ is induced by $-\{\ ,\ \}_0$,
  and the component in $\La^2(L^-)^*$ is induced by $W$.  On the other hand,
  $\End_0(TA)=\Cal G\x_{G_0}\frak g_0$, so this is a subbundle of $((L^-)^*\otimes
  L^-)\oplus ((L^+)^*\otimes L^+)\subset TA^*\otimes TA$. Thus we conclude that the
  Ricci-type contraction of $\rho$ vanishes on $L^+\x L^+$, while its components on
  $L^-\x L^+$ and $L^-\x L^-$ are induced by the Ricci-type contractions of
  $-\{\ ,\ \}_0$ and $W$, respectively. By \cref{prop2.7}, the pullback of $W$ along
  any section $s:M\to A$ represents the Weyl curvature of the Weyl structure
  determined by $s$. In the torsion-free case, this is well known to have values in
  an irreducible representation of $G_0$ that occurs with multiplicity one in
  $\La^*\frak g_-^*\otimes\frak g$, which implies that any contraction of $W$
  vanishes identically.

  Hence we see that the Ricci type contraction of $\rho$ has values in
  $(L^-)^*\otimes (L^+)^*$ and is induced by the Ricci type contraction of
  $-\{\ ,\ \}_0$, so this is a natural bundle map, and we can compute it on the
  inducing representations.  Take a basis $\{e_i\}$ of $\frak g_{-1}$ and let
  $\{e^i\}$ be the dual basis of $\frak g_{-1}^*\cong\frak g_1$, which means that for
  the Killing form $B$, we get $B(e_i,e^j)=\delta_i^j$. Now we have to view the
  $\frak g_0$ component of the bracket $[\ ,\ ]$ in $\frak g$ as a map sending
  $(\frak g_-\oplus\frak p_+)^2$ to an endomorphism of $\frak g_-\oplus\frak p_+$ via
  the adjoint action. Hence for $X,Y\in\frak g_{-1}$ and $Z,W\in\frak g_1$, the Ricci
  type contraction sends $\binom{X}{Z}$ and $\binom{Y}{W}$ to
\[
\textstyle\sum_iB\big(\big[\big[\binom{X}{Z},\binom{e_i}{0}\big],\binom{Y}{W}\big],
\binom0{e^i})+\sum_i B\big(\big[\big[\binom{X}{Z},\binom{0}{e^i}\big],
\binom{Y}{W}\big],\binom{e_i}{0}\big).
\]
Expanding the first sum using invariance of the Killing form and the fact that $\frak
g_{-1}$ is Abelian, we obtain
\[
\textstyle\sum_iB([[Z,e_i],Y],e^i)=\sum_iB(Z,[e_i,[Y,e^i]])=\sum_iB(Z,[Y,[e_i,e^i]]), 
\]
and in the same way the second sum gives $\sum_iB([X,[e_i,e^i]],W)$. But the element
$\sum_i[e_i,e^i]\in\frak g_0$ is obtained from the identity map in $\frak
g_{-1}\otimes\frak g_{-1}^*$ via the isomorphism to $\frak g_{-1}\otimes\frak g_1$
and the bracket in $\frak g$. Since these both are $\frak g_0$-equivariant,
$\sum_i[e_i,e^i]$ is $\frak g_0$-invariant and thus contained in the center of $\frak
g_0$. In the $|1|$-graded case, this center is spanned by the grading element $E$. In
addition, $B(E,\textstyle\sum_i[e_i,e^i])=\sum_iB([E,e_i],e^i)=-\dim(\frak g_{-1})$,
so $\sum_i[e_i,e^i]$ is a non-zero multiple of $E$. Hence the whole contraction gives
a non-zero multiple of $B(Z,Y)+B(X,W)=h(\binom{X}{Z},\binom{Y}{W})$.
\end{proof}

Now by construction, the canonical connection $D$ satisfies $Dh=0$, so $D$ is metric
for $h$. This implies that the Levi-Civita connection $\nabla$ of $h$ can be computed
from $D$ and its torsion $\tau$. Indeed, we claim that for
$\xi,\eta,\ze\in\frak X(A)$, $h(\nabla_\xi\eta,\ze)$ is given by
\begin{equation}\label{L-C}
  h(D_\xi\eta,\ze)-\tfrac12h(\tau(\xi,\eta),\ze)+\tfrac12h(\tau(\xi,\ze),\eta)+ 
  \tfrac12h(\tau(\eta,\ze),\xi). 
\end{equation}
This evidently defines a linear connection $\nabla$ on $TA$. Moreover, the last three
terms in \eqref{L-C} are visibly skew symmetric in $\eta$ and $\ze$, whence the fact
that $D$ is metric with respect to $h$ implies that $\nabla$ is metric with respect
to $h$, too. On the other hand, since the last two terms in \eqref{L-C} are symmetric
in $\xi$ and $\eta$, and $\tau$ is the torsion of $D$, one immediately verifies that
$\nabla$ is torsion-free. Let us write $C\in\Ga(\otimes^2T^*A\otimes TA)$ for the
contorsion tensor between $\nabla$ and $D$, so $C(\xi,\eta)=\nabla_\xi\eta-D_\xi\eta$
and the last three terms in \eqref{L-C} explicitly express
$h(C(\xi,\eta),\zeta)$. Using this, we prove the following result

\begin{thm}\label{thm3.3}
  For any torsion-free AHS structure, the pseudo-Riemannian metric $h$ induced by the
  canonical almost bi-Lagrangian structure on the bundle $A$ of Weyl structures is an
  Einstein metric with non-zero scalar curvature.
\end{thm}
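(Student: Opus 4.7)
The plan is to compute the Ricci tensor of $\nabla$ by writing $\nabla = D + C$, invoking \cref{lem3.3} to handle the $D$-contribution, and showing that the correction from $C$ vanishes in the relevant projections. First I would establish the key structural fact that $C$ takes values in $L^+$ and vanishes whenever one of its arguments lies in $L^+$. This is immediate from the explicit formula defining $h(C(\xi,\eta),\zeta)$ combined with the analogous property of $\tau$ in the torsion-free AHS case (\cref{thm2.7} together with torsion-freeness). Consequently, in the standard change-of-connection identity, the terms $C\o C$ and $C\o\tau$ both vanish identically, leaving
\[
R^\nabla(\xi,\eta)\zeta = R^D(\xi,\eta)\zeta + (D_\xi C)(\eta,\zeta) - (D_\eta C)(\xi,\zeta).
\]

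Next I take the Ricci trace. By \cref{lem3.3}, $\operatorname{tr}(\xi\mapsto R^D(\xi,\eta)\zeta) = \la\, h(\eta,\zeta)$ for a universal constant $\la$. Working in a local frame $\{e_i,e^i\}$ adapted to the splitting $TA=L^-\oplus L^+$ with $h(e_i,e^j) = \de_i^j$, a straightforward parity argument (using that $(D_X C)(-,-)$ is $L^+$-valued and vanishes on $L^+$-inputs) shows that $\operatorname{tr}(\xi\mapsto (D_\eta C)(\xi,\zeta)) = 0$ identically, while $T(\eta,\zeta) := \operatorname{tr}(\xi\mapsto (D_\xi C)(\eta,\zeta))$ is supported on $L^-\times L^-$. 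Since $h$ also vanishes on $L^-\times L^-$, the Einstein condition $\operatorname{Ric}^\nabla = \la h$ reduces to the single identity $T|_{L^-\times L^-}\equiv 0$.

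To prove this last vanishing, I would first exploit the cyclic Bianchi identity for $Y$ established in the proof of \cref{thm3.1} to rewrite the contorsion as $h(C(\xi,\eta),\zeta) = h(Y(\eta,\zeta),\xi)$ on $L^-\times L^-\times L^-$. Combined with metricity of $D$, this translates $T(\eta,\zeta)$ into the contraction $\sum_i h\bigl((D_{e^i}Y)(\zeta,e_i),\eta\bigr)$, a divergence-type trace of the universal Cotton-York tensor in the $L^+$-direction. I would then evaluate $D^+Y$ explicitly via the $P$-equivariance of the Cartan curvature function $K$: torsion-freeness ($K_{-1}=0$) together with $[\frak g_1,\frak g_1]=0$ gives $\Ad(\exp(-tZ))(W+Y) = W + Y - t[Z,W]$, hence $(D^+_\ph Y)(X,X') = -[\ph,W(X,X')]$, where the bracket is the natural $G_0$-equivariant map $L^+\otimes\End_0(TA)\to L^+$ induced by $[\frak g_1,\frak g_0]\subset\frak g_1$. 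Substituting and applying invariance of the Killing form, $T(\eta,\zeta)$ becomes a Ricci-type contraction of $W$. As invoked in \cref{lem3.3}, $W$ lies in an irreducible $G_0$-subrepresentation appearing with multiplicity one in $\La^2\frak g_-^*\otimes\frak g$, so any natural contraction of $W$ vanishes, yielding $T\equiv 0$.

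The main technical obstacle I anticipate is the derivation of the formula $D^+_\ph Y = -[\ph,W]$. It is essentially algebraic, flowing from $P$-equivariance of $K$, but the translation from the fundamental vector field on $\Cal G$ to the horizontal lift with respect to the canonical connection of \cref{prop2.3} and then to sections on $A$ requires careful bookkeeping, and one must verify that no extra terms arise from derivatives of the $L^-$-frame in the $L^+$-direction. Once this identity is in hand, the reduction to a vanishing trace of $W$ closes the argument, and the Einstein constant is forced to equal the universal constant of \cref{lem3.3}.
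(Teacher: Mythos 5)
Your proposal is correct and follows essentially the same route as the paper's proof: split $\nabla=D+C$, use \cref{lem3.3} for the $D$-part, observe that $C$ is $L^+$-valued and kills $L^+$-inputs so that only the $d^DC$-term survives in the Ricci contraction, reduce that term via the vanishing alternation of $Y$ to the $L^+$-divergence $\sum_i h\bigl((D^+_{e^i}Y)(\zeta,e_i),\eta\bigr)$, compute $D^+_\ph Y=W(\ph)$ from $P$-equivariance of the Cartan curvature (the paper packages this as \cref{thm2.4}(2)), and conclude by complete trace-freeness of the Weyl curvature. The only differences are organizational, e.g.\ you establish the clean identity $h(C(\xi,\eta),\zeta)=h(Y(\eta,\zeta),\xi)$ up front where the paper simplifies the three-term contorsion formula at the end.
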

\begin{proof}
  \cref{thm2.7} in the torsion-free case shows that $\tau$ vanishes upon
  insertion of one section of $L^+$ and has values in $L^+$. Thus equation
  \eqref{L-C} shows that $h(C(\xi,\eta),\ze)$ vanishes if one of the three fields is
  a section of $L^+$. This shows that the only non-zero component of $C$ is the one
  mapping $L^-\x L^-$ to $L^+$. Now it is standard how to compute the curvature of
  $\nabla$ from $C$ and the curvature $\rho$ of $D$ via differentiating the equation
  defining $C$. The result contains terms in which $C$ is differentiated as well as
  terms in which values of $C$ are inserted into $C$. From the form of $C$ we have
  just deduced, it follows that the latter terms vanish identically.

  Using this, one computes that for $\xi,\eta,\zeta\in\frak X(A)$ the difference
  $R(\xi,\eta)(\zeta)-\rho(\xi,\eta)(\zeta)$ is given by
  \begin{equation}
    \label{C-term}
    D_\xi(C(\eta,\zeta))-D_\eta(C(\xi,\zeta))+C(\xi,D_\eta\zeta)-C(\eta,D_\xi\zeta)-C([\xi,\eta],\zeta). 
  \end{equation}
  (This is just the covariant exterior derivative of $C$ with respect to $D$
  evaluated on $\xi$ and $\eta$ and then applied to $\ze$.)  In view of
  \cref{lem3.3}, it suffices to prove that the Ricci-type contraction of this
  expression vanishes. To compute this contraction, we leave $\xi$ and $\zeta$ as
  entries, insert the elements of a local frame of $TA$ for $\eta$ and hook the
  result into $h$ together with the elements of the dual frame. First of all,
  \eqref{C-term} visibly vanishes for $\zeta\in\Ga(L^+)$. If we insert for $\eta$ an
  element of a frame for $L^-$, then the element of the dual frame will sit in
  $L^+$. Since $C$ has values in $L^+$, these summands do not contribute to the
  contraction. Thus we only have to take into account the case where we insert
  elements of a frame for $L^+$ for $\eta$, and then the first and fourth term of
  \eqref{C-term} visibly vanish. The remaining three terms vanish if $\xi$ is a
  section of $L^+$, so what we have to compute is
\[
\textstyle\sum_ih\left(-D^+_{e^i}(C(\xi,\zeta))+C(\xi,D^+_{e^i}\zeta)-C([\xi,e^i],\zeta),e_i\right) 
\]
for a smooth local frame $\{e^i\}$ for $L^+$ with dual frame $\{e_i\}$ for $L^-$ and
local sections $\xi,\zeta\in\Ga(L^-)$. Now we can take $\xi$ and $\zeta$ and the
local frames to be obtained from vector fields respectively one-forms on $M$. Then
$D^+_{e^i}\zeta=0$, while $[\xi,e^i]\in\Ga(L^+)$ by \cref{prop3.2} and
thus $C([\xi,e^i],\zeta)=0$.

Thus we are left with computing $\sum_ih(D^+_{e^i}(C(\xi,\zeta)),e_i)$ with the
frames, $\xi$ and $\zeta$ all coming from $M$. In particular, $e_i$ is parallel for
$D^+$ so since $D$ is metric for $h$, we may rewrite this as $\sum_ie^i\cdot
h(C(\xi,\zeta),e_i)$. We can then insert the formula for $h(C(\xi,\zeta),e_i)$
resulting from \eqref{L-C}, taking into account that on entries from $L^-$ the
torsion $\tau$ is determined by the tensor $Y$ from \cref{def2.7}. Viewing $Y$ as a
section of $\La^2(L^-)^*\otimes (L^-)^*$, this leads to
\[
\textstyle\sum_ie^i\cdot h(C(\xi,\zeta),e_i)=\tfrac12\sum_ie^i\cdot
(-Y(\xi,\zeta,e_i)+Y(\xi,e_i,\zeta)+Y(\zeta,e_i,\xi)).
\]
From the proof of \cref{thm3.1}, we know that the complete alternation of $Y$
vanishes, which allows us to rewrite this as $\sum_ie^i\cdot Y(\zeta,e_i,\xi)$.  Under
the standing assumption that all sections come from $M$, they are parallel for $D^+$,
so we can complete the proof by showing that $\sum_i(D^+_{e^i}Y)(\zeta,e_i,\xi)=0$.

Now by definition, $Y$ is a component of the Cartan curvature, which descends to a
well defined section of the bundle $\La^2T^*M\otimes\Cal AM$, where
$\Cal AM=\Cal G\x_P\frak g$. By torsion freeness, the full Cartan curvature has the
form $(0,W,Y)$ with respect to the decomposition
$\frak g=\frak g_{-1}\oplus\frak g_0\oplus\frak g_1$. Hence by \cref{thm2.4},
we get
\[
D^+_\ph(0,W,Y)=(0,D^+_\ph W,D^+_\ph,Y)=-\ph\bullet (0,W,Y),
\] 
and the action $\bullet$ is induced by the Lie bracket on $\frak g$. Since this
bracket vanishes on $\frak g_1\x\frak g_1$ and defines the action of $\frak g_0$ on
$\frak g_{\pm 1}$, we conclude that $D^+W=0$ and $D^+_\ph Y=W(\ph)$, where we view
$W$ as an section of $\La^2(L^--)^*\otimes\End(L^+)\cong
\La^2(L^--)^*\otimes(L^-)^*\otimes (L^+)^*$ in the right hand side. But this implies
that $\sum_i(D^+_{e^i}Y)(\zeta,e_i,\xi)$ is given by evaluating a trace of $W$ on
$\zeta$ and $\xi$ and thus vanishes.
\end{proof}

\begin{remark}
In the special case of a projective structure on a surface $\Sigma$, the resulting Einstein metric on the four-manifold $A$ is also anti-self-dual, see~\cite{Dunajski-Mettler} and also~\cite{MR3210600}.
\end{remark}

\begin{remark}
The automorphisms of a projective structure on a smooth manifold $M$ lift to become isometric symplectomorphisms of $(A,\Omega,h)$, see~\cite{Dunajski-Mettler}. For background about automorphisms of a parabolic geometry, see~\cite{MR2545240,book,Frances}.
\end{remark}

\begin{remark}
A pair $(h,\Omega)$, consisting of a split-signature metric $h$ and a symplectic form $\Omega$ that are related by an endomorphism which squares to become the identity map, is also known as an \textit{almost para-K\"ahler structure}. Here, following~\cite{MR1824987}, we refer to such a pair, or rather its associated triple $(\Omega,L_+,L_-)$, as an almost bi-Lagrangian structure. 
\end{remark}

\subsection{Geometry of Weyl structures}\label{3.4}
Viewed as a section of $\pi:A\to M$, any Weyl structure defines an embedding of $M$
into $A$, and we can now study this embedding via submanifold geometry related to the
almost bi-Lagrangian structure. In particular, we can pull back the
two-form $\Om$ and the pseudo-Riemannian metric to $M$ along $s$, and this naturally
leads to the following definitions.

\begin{definition}
  Let $(p:\Cal G\to M,\om)$ be a torsion-free AHS structure and let $s:M\to A$ be a
  smooth section. 

(1) The Weyl structure corresponding to $s$ is called \textit{Lagrangian} if and only
if $s^*\Om=0$ and thus $s(M)\subset A$ is a Lagrangian submanifold. 

(2) The Weyl structure corresponding to $s$ is called \textit{non-degenerate} if and
only if $s^*h\in\Ga(S^2T^*M)$ is non-degenerate and thus defines a pseudo-Riemannian
metric on $M$. 
\end{definition}

These properties can easily be characterized in terms of the Rho tensor. 

\begin{prop}\label{prop3.4}
  A Weyl structure is Lagrangian if and only if its Rho tensor is symmetric and
  it is non-degenerate if and only if the symmetric part of its Rho tensor is
  non-degenerate.
\end{prop}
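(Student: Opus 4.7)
The plan is to compute $s^*\Omega$ and $s^*h$ explicitly in terms of the Rho tensor of the Weyl structure, from which both equivalences become immediate. The first step is to decompose the tangent map $Ts$ according to $TA=L^-\oplus L^+$. Since $s$ is a section of $\pi$, we have $T\pi\circ Ts=\id_{TM}$, and because $L^+=\ker T\pi$ while $L^-\to\pi^*TM$ is a fiberwise isomorphism, for any $\xi\in T_xM$ the $L^-$-component of $Ts\cdot\xi$ is forced to be the natural lift $\tilde\xi|_{s(x)}$. The $L^+$-component is then, by \cref{def2.5}, exactly $\Rho(Ts\cdot\xi)$, which by \cref{prop2.6} is the value on $\xi$ of the Rho tensor $P:=s^*\Rho$ of the Weyl structure determined by $s$ (viewed in $T^*_xM$ via the identification $L^+\cong\pi^*T^*M$ from \cref{thm2.4}).

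Next, I would recall the construction of $\Omega$ and $h$ from the pairing $B:L^-\times L^+\to\mathbb{R}$ induced by the Killing form, which under the identifications $L^-\cong\pi^*TM$ and $L^+\cong\pi^*T^*M$ agrees with the natural duality pairing. The skew resp.\ symmetric extensions to $TA$ then give, for $X_i\in L^-$ and $Z_i\in L^+$,
\[
\Omega(X_1+Z_1,X_2+Z_2)=B(X_1,Z_2)-B(X_2,Z_1),\qquad h(X_1+Z_1,X_2+Z_2)=B(X_1,Z_2)+B(X_2,Z_1).
\]
Substituting the decomposition of $Ts\cdot\xi$ found in the first step, and viewing $P$ as the $(0,2)$-tensor $P(\xi,\eta):=P(\xi)(\eta)$ on $M$, yields
\[
s^*\Omega(\xi,\eta)=P(\xi,\eta)-P(\eta,\xi),\qquad s^*h(\xi,\eta)=P(\xi,\eta)+P(\eta,\xi),
\]
so $s^*\Omega=2\operatorname{Alt}(P)$ and $s^*h=2\operatorname{Sym}(P)$.

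From here both claims are immediate: $s^*\Omega=0$ iff $P$ is symmetric, giving the Lagrangian characterization; and $s^*h$ is a non-degenerate symmetric bilinear form on $M$ iff $\operatorname{Sym}(P)$ is non-degenerate, giving the non-degeneracy characterization. There is no substantive obstacle — the only delicate point is bookkeeping the identifications $L^-\cong\pi^*TM$, $L^+\cong\pi^*T^*M$ and checking that the Killing form pairing $B$ really corresponds to the natural duality pairing under these. Once this is verified, the statement is a one-line consequence of the decomposition of $Ts$.
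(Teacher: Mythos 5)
Your proposal is correct and follows essentially the same route as the paper's proof: decompose $T_xs\cdot\xi$ into its $L^-$-component $\tilde\xi(s(x))$ and its $L^+$-component $\Rho(x)(\xi)$ via \cref{prop2.6}, then pull back the pairing defining $\Om$ and $h$ to obtain the alternation and symmetrization of the Rho tensor. The only difference is that you write out the skew and symmetric extensions explicitly, which the paper leaves to "the definitions of $\Om$ and $h$."
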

\begin{proof}
  For a point $x\in M$ and a tangent vector $\xi\in T_xM$ consider
  $T_xs\cdot\xi\in T_{s(x)}(A)$. Since this is a lift of $\xi$, its $L^-$-component
  has to coincide with $\tilde\xi(s(x))$. On the other hand, by
  \cref{prop2.6}, the $L^+$-component of $T_xs\cdot\xi$ corresponds to
  $\Rho(x)(\xi)\in T^*_xM$. Pulling back the pairing between $L^-$ and $L^+$, one
  thus obtains the map $(\xi,\eta)\mapsto \Rho(x)(\eta)(\xi)$ and thus the result
  follows from the definitions of $\Om$ and $h$.
\end{proof}

\begin{remark}
(1) For any type of parabolic geometry, there are natural line bundles called
  \textit{bundles of scales}, an example in the AHS-case is provided by the
  bundle $\Cal EM$ in \cref{thm4.1} below. If $\Cal EM$ is any bundle of scales
  on $M$, then mapping a Weyl structure to the induced Weyl connection on $\Cal EM$
  induces a bijective correspondence between Weyl structures and linear connections
  on $\Cal EM$. Fixing $\Cal EM$, one calls a Weyl structure \textit{closed} if the
  corresponding linear connection on $\Cal EM$ is flat and \textit{exact} if in
  addition there is a global parallel section of $\Cal EM$, see \cite{Weyl} and
  Section 5.1 of \cite{book}. For general types of geometries there is a larger
  freedom of choice of bundles of scales, but for AHS-structures all bundles of
  scales lead to the same subclasses of closed and exact Weyl structures.

  Together with the general theory of Weyl structures, \cref{prop3.4}
  implies that, on a torsion-free AHS-structure, a Weyl structure is Lagrangian if
  and only if it is closed. This follows from the relation between curvature and
  torsion of a Weyl connection and the Cartan curvature as discussed in Example 5.2.3
  of \cite{book} in the setting of AHS structures. By Theorem 5.2.3 of that
  reference, the curvature $R\in\Om^2(M,\Cal G\x_g\frak g_0)$ of the Weyl connection
  corresponding to $s\in\Ga(A)$ can be computed as $s^*W+\partial s^*\Rho$ for the
  quantities from \cref{prop2.7} and a certain natural bundle map
  $\partial$. For the action on a bundle of scales, the component of $R$ with values
  in the center $\frak z(\frak g_0)$ is relevant. For a torsion free geometry, $W$ is
  the lowest non-zero component of the Cartan curvature and hence by general results
  has values in an irreducible subrepresentation which cannot meet this center. Hence
  the curvature of the Weyl connection is only induced by $\partial s^*\Rho$, and the
  component of this in $\frak z(\frak g_0)$ is immediately seen to be the skew part
  of the Rho tensor up to a non-zero factor.

  This result nicely corresponds to the fact that for the canonical symplectic
  structure on any cotangent bundle $T^*N$, the image of a one-form $\al\in\Om^1(N)$
  in $T^*N$ is a Lagrangian submanifold if and only if $d\al=0$.

  \smallskip

(2) In the case of an AHS-structure, the cotangent bundle $T^*M$ coincides with the
  associated graded bundle, so \cref{prop2.2a} shows that a Weyl structure
  $s$ determines a diffeomorphism $\phi_s:T^*M\to A$. Now we can use this to pull
  back the geometric structures on $A$ to $T^*M$ and in particular, in the
  torsion-free case, compare the pullback of the symplectic form $\Om\in\Om^2(A)$ to the
  canonical symplectic structure on $T^*M$. Recall that the diffeomorphism $\phi_s$
  is induced by $\Phi_s:\Cal G_0\x\frak p_+\to\Cal G$,
  $\Phi_s(u_0,Z)=\si(u_0)\cdot\exp(Z)$, where $\si:\Cal G_0\to\Cal G$ is the
  equivariant section determined by $s$.

  Equivariancy of the Cartan connection $\om\in\Om^1(\Cal G,\frak g)$ then implies
  that the pullback $\Phi_s^*\om$ can be easily expressed explicitly in terms of
  $\si^*\om$. Denoting by $q:\Cal G_0\x\frak p_+\to T^*M$ the canonical projection,
  the definition of $\Om$ in \cref{3.1} shows that
  $q^*\phi_s^*\Om=\Phi_s^*\Om$ sends tangent vectors $\xi,\eta$ to the alternation of
  the pairing between $(\Phi_s^*\om)_-(\xi)\in\frak g_{-1}$ and
  $(\Phi_s^*\om)_+(\eta)\in\frak g_1$. On the other hand, it is easy to explicitly
  describe $q^*\al\in\Om^1(\Cal G_0\x\frak p_+)$, where $\al\in\Om^1(T^*M)$ is the
  canonical one-form. From this, one can explicitly compute the pullback $-q^*d\al$
  of the canonical symplectic form on $T^*M$ and show that it equals the sum of
  $\Phi_s^*\Om$ and the pullback of the alternation of the Rho-tensor. In particular,
  generalizing a result from \cite{Mettler:MinLag} in the projective case, we
  conclude that $\phi_s:T^*M\to A$ is a symplectomorphism if and only if the
  Weyl-structure $s$ is Lagrangian. Indeed, it turns out that also the
  split-signature metric $\phi_s^*h$ on $T^*M$ can be computed explicitly in terms of
  the underlying AHS-structure. All this will be taken up in more detail elsewhere.
\end{remark}

\smallskip

To start the geometric study of Lagrangian Weyl structures, we can characterize when
$s$ has the property that the submanifold $s(M)\subset A$ is totally geodesic.

\begin{thm}\label{thm3.4.1}
Let $(p:\Cal G\to M,\om)$ be a torsion-free AHS structure and $\pi:A\to M$ its bundle
of Weyl structures. Let $s:M\to A$ be a smooth section corresponding to a Lagrangian
Weyl structure, let $\nabla^s$ denote the corresponding Weyl connections and $\Rho^s$
the corresponding Rho-tensor. Then the following conditions are equivalent:

(1) The submanifold $s(M)\subset A$ is totally geodesic for the canonical
connection $D$.

(2) The  submanifold $s(M)\subset A$ is totally geodesic for the Levi-Civita
connection of $h$.

(3) $\nabla^s\Rho^s=0$
\end{thm}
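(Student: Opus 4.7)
The plan is to compute the second fundamental forms of $s(M)\subset A$ with respect to $D$ and $\nabla$ directly in terms of the Weyl data $(\nabla^s,\Rho^s)$, and read off the equivalence with~(3). The basic input is \cref{prop2.6}: for $\xi\in T_xM$ the tangent vector $T_xs\cdot\xi\in T_{s(x)}A$ decomposes as $\tilde\xi(s(x))+\widetilde{\Rho^s(\xi)}(s(x))$ with respect to $TA=L^-\oplus L^+$. In particular $L^+\cap T(s(M))=0$ everywhere, so $L^+|_{s(M)}$ serves as a natural transverse bundle, and the projection onto $L^+$ along $T(s(M))$ is $v_-+v_+\mapsto v_+-\widetilde{\Rho^s(v_-)}$.

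For (1)$\Leftrightarrow$(3), extend $T_xs\cdot\xi$ to the vector field $X_\xi := \tilde\xi+\widetilde{\Rho^s(\xi)}\in\frak X(A)$. Since $TM$ and $T^*M$ are completely reducible $G_0$-modules, both $\tilde\eta$ and $\widetilde{\Rho^s(\eta)}$ are $D^+$-parallel by \cref{thm2.4}(2). Therefore $D_{X_\xi}X_\eta|_{s(x)}=D_{\tilde\xi}\tilde\eta|_{s(x)}+D_{\tilde\xi}\widetilde{\Rho^s(\eta)}|_{s(x)}$, and by \cref{thm2.5}(1) these two summands pull back along $s$ to $\nabla^s_\xi\eta\in T_xM$ and $\nabla^s_\xi(\Rho^s(\eta))\in T_x^*M$ respectively. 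Applying the projection to $L^+$ along $T(s(M))$ gives the second fundamental form
\[
II^D(\xi,\eta)=\nabla^s_\xi(\Rho^s(\eta))-\Rho^s(\nabla^s_\xi\eta)=(\nabla^s_\xi\Rho^s)(\eta),
\]
so (1) is equivalent to (3).

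For (3)$\Rightarrow$(2), observe that $\nabla^s\Rho^s=0$ forces $Y^s=0$: in a $|1|$-grading $\frak p_+$ is abelian, so the Cotton--York tensor is recovered from $\nabla^s\Rho^s$ by antisymmetrization in its first two slots. Then by \cref{thm2.7} the torsion $\tau$ of $D$ vanishes at every point of $s(M)$, and the Koszul-type formula~\eqref{L-C} gives that the contorsion $C=\nabla-D$ also vanishes pointwise on $s(M)$. Hence $\nabla$ and $D$ agree along $s(M)$, and (2) follows from the equivalence (1)$\Leftrightarrow$(3) established above.

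The delicate direction is (2)$\Rightarrow$(3). Since $C$ has values in $L^+$ and vanishes whenever either entry lies in $L^+$ (as $\tau$ does and $L^+$ is $h$-isotropic), one has $C(T_xs\cdot\xi,T_xs\cdot\eta)|_{s(x)}=C(\tilde\xi,\tilde\eta)|_{s(x)}$. Combining~\eqref{L-C} with the vanishing of the complete alternation of $Y^s$ established in the proof of \cref{thm3.1}, I obtain $h(C(\tilde\xi,\tilde\eta),\tilde\zeta)|_{s(x)}=Y^s(\eta,\zeta)(\xi)$, whence
\[
II^\nabla(\xi,\eta)(\zeta)=(\nabla^s_\xi\Rho^s)(\eta,\zeta)+Y^s(\eta,\zeta)(\xi).
\]
The main obstacle is to extract $\nabla^s\Rho^s=0$ from $II^\nabla=0$: the interplay of the symmetry of $II^\nabla$ in $(\xi,\eta)$, the defining antisymmetrization relating $Y^s$ to $\nabla^s\Rho^s$, and the complete alternation identity is exactly the Bianchi-type content that ultimately forces both $Y^s=0$ and $\nabla^s\Rho^s=0$.
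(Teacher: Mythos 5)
Your handling of (1)$\Leftrightarrow$(3) is correct and essentially identical to the paper's argument (lift the frame fields $X_\eta=\tilde\eta+\widetilde{\Rho^s(\eta)}$, use $D^+$-parallelism and \cref{thm2.5} to pull the derivative back, and project onto $L^+$ along $T(s(M))$). Your route to (3)$\Rightarrow$(2) --- noting that $\nabla^s\Rho^s=0$ kills $Y$, hence the torsion and the contorsion $C$, along $s(M)$, so that $D$ and the Levi-Civita connection agree there --- is a small but legitimate shortcut compared to the paper. Your formula for the Levi-Civita second fundamental form is also correct: with $C_{ijk}=\tfrac12(-Y_{ijk}+Y_{ikj}+Y_{jki})=Y_{jki}$ and $Y_{ijk}=\nabla^s_i\Rho_{jk}-\nabla^s_j\Rho_{ik}$, one gets $\mathrm{II}^\nabla_{ijk}=\nabla^s_i\Rho_{jk}+\nabla^s_j\Rho_{ik}-\nabla^s_k\Rho_{ij}$, which is exactly equation \eqref{LC-geod} of the paper.

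The genuine gap is the last step of (2)$\Rightarrow$(3). You end by saying that the ``Bianchi-type content \dots ultimately forces both $Y^s=0$ and $\nabla^s\Rho^s=0$''; that is an assertion, not an argument, and it points in the wrong direction: no further curvature identity is needed or available here --- the extraction is pure multilinear algebra on a tensor with the stated symmetries. Concretely, set $A_{ijk}:=\nabla^s_i\Rho_{jk}+\nabla^s_j\Rho_{ik}-\nabla^s_k\Rho_{ij}$ and suppose $A=0$. Summing $A_{ijk}$ over the cyclic permutations of $(i,j,k)$ and using the symmetry of $\Rho^s$ (available because the Weyl structure is Lagrangian, which is the standing assumption in this subsection) yields $0=\nabla^s_i\Rho_{jk}+\nabla^s_j\Rho_{ki}+\nabla^s_k\Rho_{ij}$, i.e.\ the total symmetrization of $\nabla^s\Rho^s$ vanishes; subtracting this expression from $A_{ijk}$ leaves $-2\nabla^s_k\Rho_{ij}$, which must therefore vanish. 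This three-line symmetrization argument is precisely how the paper closes the loop, and without it your implication (2)$\Rightarrow$(3) is unproved.
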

\begin{proof}
  We will use abstract index notation to carry out the computations and denote the
  Rho tensor of $s$ just by $\Rho$, so this has the form $\Rho_{ij}$ and is symmetric
  by assumption. Since for each $y\in A$ and $x:=\pi(y)\in M$, we can identify
  $L^-_y$ with $T_xM$ and $L^+_y$ with $T^*_xM$, we can use the index notation also
  on $A$, but here tangent vectors have the form $(\xi^i,\al_j)$. In this language
  the proof of \cref{prop3.4} shows that for $x\in M$ the tangent space
  $T_{s(x)}s(M)$ consists of all pairs of the form $(\xi^i,\Rho_{ja}\xi^a)$. The
  condition that $s(M)$ is totally geodesic with respect to $D$ means that for vector
  field $(\xi,\al)$ on $A$ that is tangent to $s(M)$ along $s(M)$, also the covariant
  derivative in directions tangent to $s(M)$ is tangent to $s(M)$.

  In particular, for a vector field $\eta\in\frak X(M)$, we know from above that
  $(\widetilde{\eta^j},\widetilde{\Rho_{kb}\eta^b})\in\frak X(A)$ is tangent to
  $s(M)$ along $s(M)$. Since these fields are parallel for $D^+$, we see that $s(M)$
  is totally geodesic for $D$ if and only if all derivatives with respect to $D$ of
  that field are tangent to $s(M)$ along $s(M)$. But this can be checked by pulling
  back the components of $D(\widetilde{\eta^j},\widetilde{\Rho_{kb}\eta^b})$ along
  $s$, which by \cref{thm2.5} leads to $\nabla^s_i\eta^j$ and
\begin{equation}\label{L+-comp}
\nabla^s_i\Rho_{ka}\eta^a=\eta^a\nabla^s_i\Rho_{ka}+\Rho_{ka}\nabla^s_i\eta^a,
\end{equation}
  respectively. So evidently, the result is tangent to $s(M)$ if and only if
  $\eta^a\nabla^s_i\Rho_{ka}=0$ and since this has to hold for each $\eta$, we
  conclude that (1) is equivalent to (3).

  To deal with (2), we use the information on the contorsion tensor $C$ from
  \cref{3.3}. As observed in the proof of \cref{thm3.3}, the only non-zero component
  of $C$ maps $L^-\x L^-$ to $L^+$. This means that
  $(\widetilde{\eta^j},\widetilde{\Rho_{kb}\eta^b})$ is also parallel in
  $L^+$-directions for the Levi-Civita connection, so as above, we can use the pull
  back of the full derivative along $s$ and the result has to be tangent to $s(M)$. We write the pullback of $C$ along $s$ as $C_{ijk}$ using the convention
  that $C(\xi,\eta)_k=\xi^i\eta^jC_{ijk}$. Now formula \eqref{L-C} from \cref{3.3}
  expresses $C$ in terms of the torsion $\tau$ of $D$ (with $h$ just playing the role
  of identifying $L^+$ with the dual of $L^-$) and we know that the torsion
  corresponds to the Cartan curvature quantity $Y$, see \cref{thm2.7}. Writing the
  pullback of this along $s$ in abstract index notion as $Y_{ijk}$, we conclude form
  formula \eqref{L-C} that $C_{ijk}=\tfrac12(-Y_{ijk}+Y_{ikj}+Y_{jki})$. The pullback
  of the derivative along $s$ again has first component $\nabla^s_i\eta^j$ but for
  the second component, we have to add $C_{iak}\eta^a$ to the right hand side of
  \eqref{L+-comp}.

  But it is a well known fact (see Theorem 5.2.3 of \cite{book}) that the pullback of
  $Y$ along $s$ is given by the covariant exterior derivative of the Rho-tensor of
  $s$ and since $\nabla^s$ is torsion-free, this is expressed in abstract index
  notation as $Y_{ijk}=\nabla^s_i\Rho_{jk}-\nabla^s_j\Rho_{ik}$. Inserting this into
  the formula for $C_{ijk}$, we immediately conclude that we have to add
  $\eta^a\nabla^s_a\Rho_{ik}-\eta^a\nabla^s_k\Rho_{ia}$ to \eqref{L+-comp}. As above,
  this implies that (2) is equivalent to
  \begin{equation}\label{LC-geod}
  \nabla^s_i\Rho_{ka}+\nabla^s_a\Rho_{ik}-\nabla^s_k\Rho_{ia}=0. 
  \end{equation}
  Of course, \eqref{LC-geod} is satisfied if $\nabla^s\Rho=0$. Conversely, if
  \eqref{LC-geod} holds, then summing over all cyclic permutations of the indices
  shows that the total symmetrization of $\nabla^s\Rho$ has to vanish. But
  subtracting three times this total symmetrization from the left hand side of
  \eqref{LC-geod}, one obtains $-2\nabla^s_k\Rho_{ia}$, so this has to vanish, too.
\end{proof}

The result of \cref{thm3.4.1} is particularly interesting if $s$ is non-degenerate. By
\cref{prop3.4} this implies that $\Rho^s$ defines a pseudo-Riemannian metric on $M$
and since $\nabla^s$ is torsion-free, $\nabla^s\Rho^s=0$ implies that $\nabla^s$ is
the Levi-Civita connection of $\Rho^s$. On the other hand, $\Rho^s$ is always related
to the Ricci-type contraction of the curvature of $\nabla^s$, see Section 4.1.1 of
\cite{book}. In particular, for projective structures, symmetry of $\Rho^s$ implies
that it is a non-zero multiple of the Ricci curvature of $\nabla^s$, see \cite{BEG},
so in this case $\Rho^s$ defines an Einstein metric on $M$. The condition that a
projective structure contains the Levi-Civita connection of an Einstein metric can be
expressed as a reduction of projective holonomy, see \cite{Einstein} and
\cite{hol-red}.

\medskip

For a non-degenerate Lagrangian Weyl structure $s$, there is a well defined second
fundamental form of $s(M)$ with respect to any linear connection on $TA$ which is
metric for $h$. Extending the result of \cref{thm3.4.1} in this case, we can next
explicitly compute the second fundamental forms for $D$ and for the Levi-Civita
connection. To formulate the result, we use abstract index notation as in the proof
of \cref{thm3.4.1}. 

Fix the section $s:M\to A$ corresponding to a
non-degenerate, Lagrangian Weyl structure. By non-degeneracy, the Rho tensor
$\Rho_{ij}$ of $s$ admits an inverse $\Rho^{ij}\in\Ga(S^2TM)$ which is characterized
by $\Rho^{ij}\Rho_{jk}=\delta^i_k$. In the proof of \cref{thm3.4.1}, we have seen
that $T_{s(x)}s(M)$ is spanned by all elements of the form
$(\eta^i,\Rho_{ka}\eta^a)$ with $\eta^i\in T_xM$. The definition of $h$ readily
implies that the normal space $T^\perp_{s(x)}s(M)$ consists of all pairs of the form
$(\eta^i,-\Rho_{jk}\eta^k)$, so we can identify both the tangent and the normal space
in $s(x)$ with $T_xM$ via projection to the first component. Correspondingly, the
second fundamental form of $s(M)$ (with respect to any connection on $TA$ which is
metric for $h$) can be viewed as a $\binom12$-tensor field on $M$. We denote the
second fundamental form of $s : M \to (A,h)$ with respect to $D$ by
$\mathrm{II}^s_{D}$ and with respect to the Levi-Civita connection of $h$ by
$\mathrm{II}^s_h$.

\begin{thm}\label{thm3.4.2}
Let $(p:\Cal G\to M,\om)$ be a torsion-free AHS-structure with bundle of Weyl
structures $\pi:A\to M$. Let $s:M\to A$ be a non-degenerate, Lagrangian Weyl
structure with Weyl connection $\nabla^s$ and Rho-tensor $\Rho\in\Ga(S^2T^*M)$, then
\[
\mathrm{II}^s_{D}=-\tfrac12 \Rho^{ka}\nabla^s_i\Rho_{ja} \qquad\text{and}\qquad 
\mathrm{II}^s_{h}=-\tfrac12
\Rho^{ka}(\nabla^s_i\Rho_{ja}+\nabla^s_j\Rho_{ia}-\nabla^s_a\Rho_{ij}).\] 
\end{thm}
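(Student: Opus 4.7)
The plan is to extend the computation carried out in the proof of \cref{thm3.4.1} from a vanishing criterion to an explicit formula by tracking what the normal component of the covariant derivative actually is. Since $\Rho_{ij}$ is symmetric and non-degenerate under our hypotheses, we can decompose tangent vectors to $A$ at $s(x)$ explicitly using $\Rho^{ij}$.

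The preparatory linear-algebraic step is to note that a general pair $(\xi^j,\phi_k)\in T_{s(x)}A$ splits uniquely as a sum of a tangent vector $(\alpha^j,\Rho_{ka}\alpha^a)$ and a normal vector $(\beta^j,-\Rho_{ka}\beta^a)$ to $s(M)$, with $\alpha^a=\tfrac12(\xi^a+\Rho^{ab}\phi_b)$ and $\beta^a=\tfrac12(\xi^a-\Rho^{ab}\phi_b)$. Under the identification of the normal space with $T_xM$ via projection to the first component, the normal part of $(\xi^j,\phi_k)$ is therefore $\tfrac12(\xi^a-\Rho^{ab}\phi_b)$. This is the single ingredient that does all the work.

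For $\mathrm{II}^s_D$: given $\eta\in\frak X(M)$, the lift $(\widetilde{\eta^j},\widetilde{\Rho_{kb}\eta^b})$ is tangent to $s(M)$ along $s(M)$ and is parallel in $L^+$-directions, so by \cref{thm2.5} its $D$-derivative in the direction $T_xs\cdot\xi$ has components $\xi^i\nabla^s_i\eta^j$ in $L^-$ and $\xi^i(\eta^b\nabla^s_i\Rho_{kb}+\Rho_{kb}\nabla^s_i\eta^b)$ in $L^+$. Applying the projection formula above, the $\nabla^s_i\eta$ terms cancel using $\Rho^{ab}\Rho_{bc}=\delta^a_c$, and I am left with $-\tfrac12\Rho^{ab}\xi^i\eta^c\nabla^s_i\Rho_{bc}$. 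Relabeling and using symmetry of $\Rho$ yields the first formula of the theorem.

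For $\mathrm{II}^s_h$: write $\nabla=D+C$, where the contorsion $C$ (from \cref{3.3}) has its only nonzero component sending $L^-\times L^-$ to $L^+$. Since the $L^-$-parts of $T_xs\cdot\xi$ and of the tangent field are $\xi^i$ and $\eta^j$, the only additional contribution to the $L^+$-component of the derivative is $\xi^i\eta^j C_{ijk}$. Combining the formula $s^*Y_{ijk}=\nabla^s_i\Rho_{jk}-\nabla^s_j\Rho_{ik}$ for the pullback of $Y$ (as recalled in the proof of \cref{thm3.4.1}) with $C_{ijk}=\tfrac12(-Y_{ijk}+Y_{ikj}+Y_{jki})$ from \eqref{L-C}, and exploiting symmetry of $\Rho$, one finds that the naive six terms collapse to $C_{icb}=\nabla^s_c\Rho_{ib}-\nabla^s_b\Rho_{ic}$. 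Feeding this through the same normal projection then produces the symmetric three-term combination $-\tfrac12\Rho^{ka}(\nabla^s_i\Rho_{ja}+\nabla^s_j\Rho_{ia}-\nabla^s_a\Rho_{ij})$ that appears in the theorem.

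The substantive content is entirely contained in \cref{thm2.5} (interpretation of $D$ along $s$ as $\nabla^s$), the tangent/normal characterization from the proof of \cref{thm3.4.1}, and the expression of $C$ in terms of $Y$ and hence $\nabla^s\Rho$. The only delicate point is index bookkeeping: one must use the symmetry of $\Rho$ precisely when contracting with $\Rho^{ab}$ so that the three derivatives of $\Rho$ in $\mathrm{II}^s_h$ appear in the symmetric pattern displayed, rather than in a pattern that merely has the correct antisymmetrization.
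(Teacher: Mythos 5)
Your proposal is correct and follows essentially the same route as the paper: the paper's proof likewise just projects the derivatives already computed in the proof of \cref{thm3.4.1} to the normal space, using exactly the projection formula $\tfrac12(\xi^i-\Rho^{ia}\al_a)$ for the $L^-$-component of the normal part. You have merely written out explicitly the index bookkeeping (the cancellation of the $\nabla^s\eta$ terms via $\Rho^{ab}\Rho_{bc}=\delta^a_c$ and the collapse of $C_{ijk}$ to $\nabla^s_j\Rho_{ik}-\nabla^s_k\Rho_{ij}$) that the published proof leaves to the reader, and all of these details check out.
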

\begin{proof}
  We only have to project the derivatives computed in the proof of \cref{thm3.4.1}
  to the normal space. From the description of tangent and normal spaces, it follows
  readily that projecting $(\xi^i,\al_j)$ to the normal space and taking the
  $L^-$-component of the result, one obtains $\tfrac12(\xi^i-\Rho^{ia}\al_a)$. Using
  this, the formulae follow directly from the the proof of \cref{thm3.4.1}.
\end{proof}

\section{Relations to non-linear invariant PDE}\label{4}
We conclude this article by discussing a relation between the geometry on the bundle
$A$ of Weyl structures and non-linear invariant PDE associated to AHS structures. We
will mainly consider the prototypical example of a projectively invariant PDE of
Monge-Amp\`ere type. We briefly discuss analogs of this and other invariant
non-linear PDE for specific types of AHS structures, but this will be taken up in detail
elsewhere.

\subsection{A tractorial description of \texorpdfstring{$A$}{A}}\label{4.1}
We start by deriving an alternative description of the bundle $A\to M$ of Weyl
structures associated to an AHS structure $(p:\Cal G\to M,\om)$ based on tractor
bundles. As mentioned in \cref{2.4}, these are bundles associated to representations
of $P$ that are restrictions of representations of $G$. An important feature of these
bundles is that they inherit canonical linear connections from the Cartan connection
$\om$. Together with some algebraic ingredients, these form the basis for the
machinery of BGG sequences that was developed in \cite{CSS-BGG} and
\cite{Calderbank-Diemer}, which will provide input to some of the further
developments. We have also met the canonical invariant filtration on representations
of $P$ in \cref{2.4} and the corresponding filtration of associated bundles by
smooth subbundles. For representations of $G$, these admit a simpler description,
that we derive first. In most of the examples we need below, this description is
rather obvious, so readers not interested in representation theory aspects can safely
skip the proof of this result.

Recall that for any $|k|$-grading on $\mathfrak g$, there is a unique grading element $E$, 
such that for $i=-k,\dots,k$ the subspace $\mathfrak g_i$ is the eigenspace with eigenvalue $i$ for
the adjoint action of $E$. In particular, $E$ has to lie in the center of the subalgebra $\mathfrak g_0$.
In the case of a $|1|$-grading, this center has dimension $1$ and thus is spanned by $E$. 

\begin{lemma}\label{lem4.1}
  Consider a Lie group $G$ with simple Lie algebra $\frak g$ that is endowed with a
  $|1|$-grading with grading element $E$, and let $G_0\subset P\subset G$ be subgroups associated to this
  grading. Let $\Bbb V$ be a representation of $G$ which is irreducible as a
  representation of $\frak g$. Then there is a $G_0$-invariant decomposition $\Bbb
  V=\Bbb V_0\oplus\dots\oplus\Bbb V_N$ such that
  \begin{itemize}
    \item Each $\Bbb V_j$ is an eigenspace of $E$. 
    \item For $i\in\{-1,0,1\}$ and each $j$, we have $\frak g_i\cdot\Bbb
      V_j\subset\Bbb V_{i+j}$.
    \item For each $j>0$, restriction of the representation defines a surjection
      $\frak g_1\otimes\Bbb V_{j-1}\to\Bbb V_j$.
    \item The canonical $P$-invariant filtration on $\Bbb V$ is given by $\Bbb
      V^j=\oplus_{\ell\geq j}\Bbb V_\ell$. 
  \end{itemize}
\end{lemma}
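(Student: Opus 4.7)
The plan is to realize the decomposition $\mathbb{V} = \mathbb{V}_0 \oplus \cdots \oplus \mathbb{V}_N$ as the $E$-eigenspace decomposition of $\mathbb{V}$, with indexing shifted so that $\mathbb{V}_0$ corresponds to the smallest $E$-eigenvalue. Since $\ad(E)$ diagonalizes $\mathfrak{g}$ with integer eigenvalues, $E$ is semisimple as an element of $\mathfrak{g}$ and hence acts diagonalizably on any finite-dimensional $\mathfrak{g}$-module. I would first decompose $\mathbb{V} = \bigoplus_\lambda \mathbb{V}_{(\lambda)}$ into $E$-eigenspaces and verify the shift property via the one-line calculation $E(Xv) = [E,X]v + X(Ev) = (i+\lambda)Xv$ for $X \in \mathfrak{g}_i$ and $v \in \mathbb{V}_{(\lambda)}$. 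For the first bullet, I would observe that any $g_0 \in G_0$ preserves the grading on $\mathfrak{g}$, hence $\Ad(g_0)E$ acts on every $\mathfrak{g}_i$ with eigenvalue $i$; since $\mathfrak{g}$ is simple and thus has trivial centre, this forces $\Ad(g_0)E = E$, so $g_0$ commutes with $E$ on $\mathbb{V}$ and preserves each eigenspace.

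Next, letting $\lambda_0$ be the smallest eigenvalue of $E$ on $\mathbb{V}$, I would consider $\mathbb{W} := \sum_{k \geq 0} \mathfrak{g}_1^k \cdot \mathbb{V}_{(\lambda_0)}$ and prove it is a $\mathfrak{g}$-submodule. It is manifestly $\mathfrak{g}_1$-stable, it is $\mathfrak{g}_0$-stable because $[\mathfrak{g}_0, \mathfrak{g}_1] \subset \mathfrak{g}_1$ and $\mathfrak{g}_0 \cdot \mathbb{V}_{(\lambda_0)} \subset \mathbb{V}_{(\lambda_0)}$, and a brief induction on $k$ using $[\mathfrak{g}_{-1}, \mathfrak{g}_1] \subset \mathfrak{g}_0$ together with $\mathfrak{g}_{-1} \cdot \mathbb{V}_{(\lambda_0)} \subset \mathbb{V}_{(\lambda_0 - 1)} = 0$ shows it is $\mathfrak{g}_{-1}$-stable. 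Irreducibility of $\mathbb{V}$ then forces $\mathbb{W} = \mathbb{V}$, which simultaneously shows that the $E$-eigenvalues form an unbroken consecutive block $\lambda_0, \lambda_0 + 1, \ldots, \lambda_0 + N$ and that $\mathfrak{g}_1^k \cdot \mathbb{V}_{(\lambda_0)} = \mathbb{V}_{(\lambda_0 + k)}$ for every $k$. Setting $\mathbb{V}_j := \mathbb{V}_{(\lambda_0 + j)}$ therefore delivers the decomposition of the first three bullets.

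For the identification of the canonical $P$-invariant filtration in the fourth bullet, I would argue by downward induction on $j$ that $\mathbb{V}^j = \bigoplus_{\ell \geq j} \mathbb{V}_\ell$. One inclusion is immediate from the shift property. For the reverse inclusion, the key subclaim is that a non-zero element $v \in \mathbb{V}_\ell$ killed by $\mathfrak{g}_1$ must satisfy $\ell = N$. This I would prove via the PBW factorisation $U(\mathfrak{g}) = U(\mathfrak{g}_{-1}) U(\mathfrak{g}_0) U(\mathfrak{g}_1)$: since $\mathfrak{g}_1 \cdot v = 0$, the cyclic submodule $U(\mathfrak{g}) \cdot v$ reduces to $U(\mathfrak{g}_{-1}) U(\mathfrak{g}_0) \cdot v \subset \bigoplus_{k \leq \ell} \mathbb{V}_k$, and irreducibility of $\mathbb{V}$ then forces this to equal $\mathbb{V}$, so $\ell = N$. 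The inductive step then follows by decomposing $v \in \mathbb{V}^j$ into its $E$-eigenspace components $v_\ell$, noting that the requirement $\mathfrak{g}_1 v \subset \mathbb{V}^{j+1} = \bigoplus_{m \geq j+1} \mathbb{V}_m$ splits eigenspace-by-eigenspace into the conditions $\mathfrak{g}_1 v_\ell = 0$ for $\ell < j$, each of which forces $v_\ell = 0$ by the subclaim (since $\ell < j \leq N$).

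The main (mild) obstacle is the subclaim that the $\mathfrak{g}_1$-kernel of $\mathbb{V}$ coincides with its top $E$-eigenspace: it is precisely where the hypothesis that $\mathbb{V}$ is irreducible as a $\mathfrak{g}$-module (rather than merely as a $P$- or $G_0$-module) enters in an essential way. Everything else is a direct consequence of $E$ spanning the centre of $\mathfrak{g}_0$ together with the eigenvalue shifts produced by the three graded pieces of $\mathfrak{g}$.
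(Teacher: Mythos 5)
Your proof is correct, and its overall skeleton coincides with the paper's: diagonalize the grading element $E$ on $\Bbb V$, observe the eigenvalue shift $\frak g_i\cdot\Bbb V_{(\la)}\subset\Bbb V_{(\la+i)}$, deduce $G_0$-invariance from $\Ad(g_0)(E)=E$, use irreducibility to show that the eigenvalues form a consecutive block generated from the bottom eigenspace by $\frak g_1$, and identify the filtration by downward induction. (Your merging of the ``consecutive block'' and ``surjectivity'' steps into the single submodule $\Bbb W=\sum_k\frak g_1^k\cdot\Bbb V_{(\la_0)}$ is a cosmetic streamlining of the paper's two-step argument with $\tilde{\Bbb V}$.) The one place where you genuinely diverge is the key subclaim that a nonzero vector annihilated by $\frak g_1$ must lie in the top eigenspace $\Bbb V_N$. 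The paper proves this by complexifying, invoking the uniqueness up to scale of the highest weight vector, and showing that a $\frak g_1$-annihilated vector in $\Bbb V_j$ with $j<N$ would produce a second highest weight vector via the decomposition of positive root spaces into those of $\frak g_0$ and those in $\frak g_1$. You instead use the PBW factorization $U(\frak g)=U(\frak g_{-1})U(\frak g_0)U(\frak g_1)$ to conclude that the cyclic module generated by such a vector is contained in $\oplus_{k\le\ell}\Bbb V_k$, which contradicts irreducibility unless $\ell=N$. Your argument is more elementary (no weight theory, no complexification at that step) and, to my mind, cleaner; the paper's argument has the merit of exhibiting where the unique highest weight vector sits, which is reused informally elsewhere in Section 4. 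Both are complete; the only shared soft spot, which neither treatment belabors, is the justification that $E$ acts diagonalizably with real eigenvalues on the real vector space $\Bbb V$ (this follows from $E$ being a rational combination of coroots, so all its weights are rational), but you are no less rigorous here than the paper.
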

\begin{proof}
Note first that the $|1|$-grading of $\frak g$ induces a $|1|$-grading on the
complexification $\frak g^{\Bbb C}$ of $\frak g$, which has the same grading element
$E$ as $\frak g$. It is also well known that there is a Cartan subalgebra of $\frak
g^{\Bbb C}$ that contains $E$. Complexifying $\Bbb V$ and $\frak g$ if necessary and
then passing back to the $E$-invariant subspace $\Bbb V$, we conclude that $E$ acts
diagonalizably on $\Bbb V$. Denoting the $\la$-eigenspace for $E$ in $\Bbb V$ by
$\Bbb V_\la$, it follows readily that $\frak g_i\cdot\Bbb V_\la\subset\Bbb V_{\la+i}$
for $i\in\{-1,0,1\}$. Now take an eigenvalue $\la_0$ with minimal real part, let $N$
be the smallest positive integer such that $\la_0+N+1$ is not an eigenvalue of $E$
and put $\Bbb V_j:=\Bbb V_{\la_0+j}$ for $j=0,\dots,N$. Then, by construction, $\frak
g_{-1}$ acts trivially on $\Bbb V_0$, $\frak g_1$ acts trivially on $\Bbb V_N$, and
each $\Bbb V_j$ is $\frak g_0$-invariant. This shows that $\Bbb
V_0\oplus\dots\oplus\Bbb V_N$ is $\frak g$-invariant and hence has to coincide with
$\Bbb V$ by irreducibility.

By definition, the adjoint action of each element $g_0\in G_0$ preserves the grading
of $\frak g$, which easily implies that $\Ad(g_0)(E)$ acts on $\frak g_i$ by
multiplication by $i$ for $i\in\{-1,0,1\}$. This means that $\Ad(g_0)(E)-E$ lies in
the center of $\frak g$, so $\Ad(g_0)(E)=E$. But then for $v\in\Bbb V$, we can
compute $E\cdot g_0\cdot v$ as $\Ad(g_0)(E)\cdot g_0\cdot v=g_0\cdot E\cdot v$. This
shows that each $\Bbb V_j$ is $G_0$-invariant and it only remains to prove the last
two claimed properties of the decomposition.

We put $\tilde{\Bbb V}_0:=\Bbb V_0$ and for $j>0$, we inductively define $\tilde{\Bbb
  V}_j$ as the image of the map $\frak g_1\otimes\tilde{\Bbb V}_{j-1}\to\Bbb
V_j$. Then, by construction, each $\tilde{\Bbb V}_j$ is a $\frak g_0$-invariant
subspace of $\Bbb V_j$, so $\tilde{\Bbb V}:=\oplus_{j=0}^N\tilde{\Bbb V}_j\subset\Bbb
V$ is invariant under the actions of $\frak g_0$ and $\frak g_1$. But for $X\in\frak
g_{-1}$ and $Z\in\frak g_1$, we have $[X,Z]\in\frak g_0$ and for $v\in\Bbb V$ we get
\[
X\cdot Z\cdot v=Z\cdot X\cdot v+[Z,X]\cdot v. 
\] This inductively shows that $\tilde{\Bbb V}$ is invariant under the action of
$\frak g_{-1}$. Thus it is $\frak g$-invariant and hence has to coincide with $\Bbb
V$ by irreducibility, so it remains to verify the claimed description of the
canonical $P$-invariant filtration.

To do this, we first claim that an element $v\in\Bbb V$ such that $Z\cdot v=0$ for
all $Z\in\frak g_1$ has to be contained in $\Bbb V_N$. It suffices to prove this for
the complexification, so we may assume that both $\frak g$ and $\Bbb V$ are complex
and so there is a highest weight vector $v_0\in\Bbb V$ which is unique up to scale by
irreducibility. It is then well known that $\Bbb V$ is spanned by vectors obtained
from $v_0$ by the iterated action of elements in negative root spaces of $\frak
g$. Since on such elements $E$ has non-positive eigenvalues, we conclude that
$v_0\in\Bbb V_N$. Now assume that for some $j<N$, the space $W:=\{v\in\Bbb V_j:\frak
g_1\cdot v=\{0\}\}$ is non-trivial. Then, by construction, this is a $\frak
g_0$-invariant subspace of $\Bbb V_j$ on which the center of $\frak g_0$ acts by a
scalar, so it must contain a vector that is annihilated by all elements in positive
root spaces of $\frak g_0$. But since any positive root space of $\frak g$ either is
a positive root space of $\frak g_0$ or is contained in $\frak g_1$, this has to be a
highest weight vector for $\frak g$, which contradicts uniqueness of $v_0$ up to
scale.

Having proved the claim, we can first interpret it as showing that $\Bbb V^N=\Bbb
V_N$. From this the description of the $P$-invariant filtration follows by backwards
induction: Suppose that that we have shown that $\Bbb V^{N-j}=\Bbb
V_{N-j}\oplus\dots\oplus\Bbb V_N$ and let $w\in\Bbb V$ be such that for all
$Z\in\frak g_1$, we have $Z\cdot w\in\Bbb V^{N-j}$. Decomposing $w=w_0+\dots+w_N$, we
conclude that for all $i<N-j-1$ we must have $Z\cdot w_i=0$ and hence $w\in\Bbb
V_{N-j-1}\oplus\dots\oplus\Bbb V_N$. Together with the obvious fact that
$\oplus_{\ell\geq N-j-1}\Bbb V_\ell\subset\Bbb V^{N-j-1}$, this implies the
description of the $P$-invariant filtration.
\end{proof}

Using this, we can now prove an alternative description of the bundle of Weyl
structures that, as we shall see in the examples below, generalizes the construction
of \cite{Dunajski-Mettler}.

\begin{thm}\label{thm4.1}
Suppose that $(G,P)$ corresponds to a $|1|$-grading of the Lie algebra $\frak g$ of
$G$. Let $\Bbb V$ be a representation of $G$, which is non-trivial and irreducible as
a representation of $\frak g$, with natural $P$-invariant filtration $\{\Bbb
V^j:j=0,\dots,N\}$ such that $\Bbb V/\Bbb V^1$ has real dimension $1$. For a
parabolic geometry $(p:\Cal G\to M,\om)$ of type $(G,P)$ let $\Cal VM$ be the tractor
bundle induced by $\Bbb V$, $\Cal V^jM$ the subbundle corresponding to $\Bbb V^j$,
and define $\Cal EM$ to be the real line bundle $\Cal VM/\Cal V^1M$.

Then the bundle $A\to M$ of Weyl structures can be naturally identified with the open
subbundle in the projectivization $\Cal P(\Cal VM/\Cal V^2M)$ formed by all lines
that are transversal to the subbundle $\Cal V^1M/\Cal V^2M$ of hyperplanes. This in
turn leads to an identification of $A\to M$ with the bundle of all linear connections
on the line bundle $\Cal EM\to M$.
\end{thm}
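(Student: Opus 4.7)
The plan is to prove the two identifications in sequence, both realized at the level of the fiber representation $\mathbb V/\mathbb V^2$, and then passed to associated bundles over $M$.

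For the first identification, I would work with the $G_0$-decomposition $\mathbb V/\mathbb V^2\cong\mathbb V_0\oplus\mathbb V_1$ supplied by \cref{lem4.1}, where $\mathbb V_0$ is one-dimensional by assumption and $\mathbb V^1/\mathbb V^2\cong\mathbb V_1$. A direct computation then gives the $P_+$-action: for $Z\in\frak g_1$ and $v=v_0+v_1\in\mathbb V/\mathbb V^2$, since $\frak g_1\cdot\mathbb V_1\subset\mathbb V^2$, one has $\exp(Z)\cdot v = v_0+Z\cdot v_0+v_1$ modulo $\mathbb V^2$. Fix $v_0\in\mathbb V_0\setminus\{0\}$; the strategy is to show that the map $P/G_0\to\mathcal P(\mathbb V/\mathbb V^2)$, $pG_0\mapsto p\cdot[v_0]$, is a $P$-equivariant bijection onto the set of lines transversal to $\mathbb V^1/\mathbb V^2$. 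Well-definedness is clear because $G_0$ scales $v_0$ (via its character on $\mathbb V_0$) and so preserves $[v_0]$. For surjectivity onto the transversal lines, the key observation is that the $G_0$-equivariant map $\frak g_1\to\mathbb V_1$, $Z\mapsto Z\cdot v_0$, is surjective by the third bullet of \cref{lem4.1}, hence the $P_+$-orbit of $[v_0]$ exhausts all lines of the form $[v_0+w]$ with $w\in\mathbb V_1$, which are exactly the transversal ones. For injectivity, the stabilizer of $[v_0]$ in $P_+$ consists of $\exp(Z)$ with $Z\cdot v_0=0$; since $\frak g_1$ is $G_0$-irreducible in the AHS case (\cref{rem3.1}), the $G_0$-equivariant surjection $\frak g_1\to\mathbb V_1$ is an isomorphism, so the stabilizer is trivial and the stabilizer of $[v_0]$ in all of $P$ reduces to $G_0$. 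This yields a $P$-equivariant diffeomorphism between $P/G_0$ and the transversal lines, and applying $\mathcal G\times_P(-)$ identifies $A$ with the claimed open subbundle of $\mathcal P(\mathcal VM/\mathcal V^2M)$.

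For the second identification, I would observe that a transversal line subbundle $\ell\subset\mathcal VM/\mathcal V^2M$ is the same as a splitting of the short exact sequence
\begin{equation*}
0\to\mathcal V^1M/\mathcal V^2M\to\mathcal VM/\mathcal V^2M\to\mathcal EM\to 0,
\end{equation*}
because the quotient projection restricts to an isomorphism $\ell\to\mathcal EM$. Both the space of such splittings and the space of linear connections on the line bundle $\mathcal EM\to M$ are affine bundles over $M$ modeled on $T^*M$: splittings are modeled on $\mathrm{Hom}(\mathcal EM,\mathcal V^1M/\mathcal V^2M)\cong\mathcal EM^*\otimes\mathcal V_1M$, which under the $G_0$-equivariant isomorphism $\mathbb V_1\cong\frak g_1$ coming from $v_0$ becomes $\mathcal EM^*\otimes\mathcal EM\otimes T^*M\cong T^*M$; connections on a line bundle are trivially modeled on $T^*M$. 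The canonical map between the two is provided by the normal tractor connection $\nabla^{\mathcal V}$ on $\mathcal VM$: given a splitting $\ell$, one lifts a section $\sigma\in\Gamma(\mathcal EM)$ to the unique section of $\ell$ that covers it, further lifts to $\mathcal VM$, applies $\nabla^{\mathcal V}$, reduces mod $\mathcal V^2M$, and projects onto $\ell$ using the decomposition $\mathcal VM/\mathcal V^2M=\ell\oplus(\mathcal V^1M/\mathcal V^2M)$; the composition with the isomorphism $\ell\cong\mathcal EM$ yields the connection $\nabla^\ell$. I would verify that changing $\ell$ by an element of $T^*M$ (in the affine structure on splittings) changes $\nabla^\ell$ by the same 1-form, so that the map is affine with constant (indeed the identity) linear part, hence a bijection.

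The main obstacle will be the final check that the constructed map from splittings to connections is well-defined and an affine isomorphism. Well-definedness requires verifying that although the further lift to $\mathcal VM$ is non-canonical, the ambiguity lies in $\mathcal V^1M$ whose image under $\nabla^{\mathcal V}$ modulo $\mathcal V^2M$ projects to zero in $\ell$ once one uses the transversal splitting; this is precisely where the filtration properties of the normal tractor connection (namely that $\nabla^{\mathcal V}$ shifts the filtration by at most one step when the torsion vanishes, and the component responsible for the shift is algebraic) must be invoked. Identifying the induced change in $\nabla^\ell$ when $\ell$ is varied, and matching it with the affine structure on connections, is the calculation requiring the most care. Everything else is formal once \cref{lem4.1} and the $G_0$-irreducibility of $\frak g_1$ are in hand.
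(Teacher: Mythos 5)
Your first identification is essentially the paper's own argument: the same use of the decomposition from \cref{lem4.1}, the same computation of the $\exp(Z)$-action modulo $\Bbb V^2$, surjectivity of $\frak g_1\otimes\Bbb V_0\to\Bbb V_1$ for transitivity of the $P$-action on transversal lines, and irreducibility of $\frak g_1$ for the stabilizer computation (the paper phrases the injectivity via irreducibility of $\frak g_1\otimes\Bbb V_0$, which is the same point up to the character twist you suppress). For the second identification the routes genuinely diverge. The paper identifies $\Cal VM/\Cal V^2M$ with the jet prolongation $J^1\Cal EM$ using the BGG splitting operator $S$ and the result of \cite{BCEG} that the induced bundle map $J^i\Cal EM\to\Cal VM/\Cal V^{i+1}M$ is an isomorphism for $i\leq i_0$ with $i_0\geq 1$; transversal lines then become splittings of the jet exact sequence, which classically correspond to linear connections on $\Cal EM$. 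You instead avoid the jet-bundle identification entirely and construct the connection directly from a transversal line subbundle via the normal tractor connection, then check that the correspondence is an affine bijection modeled on $T^*M$. Your route is more self-contained (no appeal to the prolongation theorem of \cite{BCEG}) at the price of the hands-on verification you defer; in that verification it helps to observe that the composite ``project onto $\ell$ along $\Cal V^1M/\Cal V^2M$, then identify $\ell\cong\Cal EM$ via the quotient map'' is just the quotient projection itself, so the $\ell$-dependence enters only through the choice of lift, and the linear part of your map is induced by the algebraic action $\frak g_{-1}\otimes(\Bbb V^1/\Bbb V^2)\to\Bbb V/\Bbb V^1$, whose nondegeneracy follows from nondegeneracy of the Killing form on $\frak g_{-1}\x\frak g_1$. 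One index slip in your well-definedness discussion: the ambiguity in lifting from $\Cal VM/\Cal V^2M$ to $\Cal VM$ lies in $\Cal V^2M$ (not $\Cal V^1M$); its image under $\nabla^{\Cal V}$ lies in $\Cal V^1M$ because the tractor connection always shifts the filtration by at most one step (no torsion-freeness hypothesis is needed for this), and that is what dies after reducing modulo $\Cal V^2M$ and projecting along the hyperplane.
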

\begin{proof}
By assumption $\Bbb V^1\subset\Bbb V$ is a $P$-invariant hyperplane, so this descends
to a $P$-invariant hyperplane $\Bbb V^1/\Bbb V^2$ in $\Bbb V/\Bbb V^2$. Passing to
the projectivization, the complement of this hyperplane is a $P$-invariant open
subset $U\subset\Cal P(\Bbb V/\Bbb V^2)$ and hence defines a natural open subbundle in the
associated bundle $\Cal P(\Cal VM/\Cal V^2M)$.

Now take the decomposition $\Bbb V=\oplus_{j=0}^N\Bbb V_j$ from \cref{lem4.1}. Then
$\Bbb V_0$ is a line in $\Bbb V$ transversal to $\Bbb V^1$ and hence defines a point
$\ell_0\in U$. We claim that the $P$-orbit of $\ell_0$ is all of $U$, while its
stabilizer subgroup in $P$ coincides with $G_0$. This shows that $U\cong P/G_0$ and
thus implies the first claimed description of $A\to M$. As observed in \cref{2.4}, an
element $g\in P$ can be written uniquely as $\exp(Z)g_0$ for $g_0\in G_0$ and
$Z\in\frak g_1$. We know that $\Bbb V_0$ is $G_0$-invariant, so for $w\in\ell_0$ we
get $g_0\cdot w=aw$ for some nonzero element $a\in\Bbb R$. On the other hand,
$\exp(Z)\cdot w=w+Z\cdot w+\tfrac12Z\cdot Z\cdot w+\dots$, and all but the first two
summands lie in $\Bbb V^2$. This shows that the action of $\exp(Z)g_0$ sends $\ell_0$
to the line in $\Bbb V/\Bbb V^2$ spanned by $w+Z\cdot w+\Bbb V^2$. But from
\cref{lem4.1}, we know that the action defines a surjection $\frak g_1\otimes\Bbb
V_0\to\Bbb V_1$, which shows that $P\cdot\ell_0=U$. On the other hand, it is well
known that $\frak g_1$ is an irreducible representation of $\frak g_0$. Thus also
$\frak g_1\otimes\Bbb V_0$ is irreducible, so $Z\cdot w=0$ if and only $Z=0$, which
shows that the stabilizer of $\ell_0$ in $P$ coincides with $G_0$.

For the second description, we need some input from the machinery of BGG
sequences. There is a natural invariant differential operator $S:\Ga(\Cal
EM)\to\Ga(\Cal VM)$ which splits the tensorial map $\Ga(\Cal VM)\to\Ga(\Cal EM)$
induced by the quotient projection $\Cal VM\to\Cal EM$. It turns out that the operator $\Gamma(\mathcal EM)\to\Gamma(\mathcal VM/\mathcal V^{i+1}M)$ induced by $S$ has
order $i$, so there is an induced vector bundle map from the jet prolongation
$J^i\Cal EM$ to $\Cal VM/\Cal V^{i+1}M$. As proved in \cite{BCEG}, the representation
$\Bbb V$ determines an integer $i_0$ such that this is an isomorphism for all $i\leq
i_0$. For a non-trivial representation $i_0\geq 1$, so we conclude that
$J^1\Cal EM\cong \Cal VM/\Cal V^2M$. Since $S$ splits the tensorial projection, we
see that, in a point $x\in M$, the hyperplane $\Cal V^1_xM/\Cal V^2_xM$ corresponds
to the jets of sections vanishing in $x$. Thus lines in $\Cal VM/\Cal V^2M$ that are
transversal to $\Cal V^1M/\Cal V^2M$ exactly correspond to lines in $J^1\Cal EM$ that
are transversal to the kernel of the natural projection to $\Cal EM$. Choosing such a
line is equivalent to choosing a splitting of this projection and thus of the jet
exact sequence for $J^1\Cal EM$. It is well known that the choice of such a splitting
is equivalent to the choice of a linear connection on $\Cal EM$.
\end{proof}

\begin{example}\label{ex4.1}
(1) \textbf{Oriented projective structures}. For $n\geq 2$, put $G:=SL(n+1,\Bbb R)$
  and let $P$ be the stabilizer of the ray in $\Bbb R^{n+1}$ spanned by the first
  element $e_0$ in the standard basis. Taking the complementary hyperplane spanned by
  the remaining basis vectors, one obtains a $|1|$-grading on the Lie algebra $\frak
  g$ of $G$ by decomposing into blocks of sizes $1$ and $n$ as in
  $\left(\begin{smallmatrix}\frak g_0 & \frak g_1\\ \frak g_{-1} & \frak g_0
  \end{smallmatrix}\right)$. The subgroup $G_0\subset P$ is then easily seen to
  consist of all block diagonal matrices in $P$.

  Now we define $\Bbb V:=\Bbb R^{(n+1)*}$, the dual of the standard representation of
  $G$. In terms of the dual of the standard basis, this decomposes as the sum of
  $\Bbb V_0:=\Bbb R\cdot e_0^*$ and $\Bbb V_1$ spanned by the remaining basis
  vectors. All properties claimed in \cref{lem4.1} are obviously satisfied in this
  case. The tractor bundle $\Cal VM$ corresponding to $\Bbb V$ is usually called the
  (standard) cotractor bundle $\Cal T^*M$ and the line bundle $\Cal EM$ is the bundle
  $\Cal E(1)$ of projective $1$-densities. Since $\Bbb V^2=\{0\}$ in this case,
  \cref{thm4.1} realizes $A$ as an open subbundle in $\Cal P(\Cal
  T^*M)$, and this is exactly the construction from \cite{Dunajski-Mettler}. In fact,
  it is well known that $\Cal T^*M\cong J^1\Cal E(1)$ in this case, this is even used
  as a definition in \cite{BEG}.

  More generally, for $k\geq 2$, we can take $\Bbb V$ to be the symmetric power
  $S^k\Bbb R^{(n+1)*}$. This visibly decomposes as $\Bbb V_0\oplus\dots\oplus\Bbb
  V_k$, where $\Bbb V_i$ is spanned by the symmetric products of $(e_0^*)^{k-i}$ with
  $i$ other basis elements. This corresponds to the tractor bundle $S^k\Cal T^*M$,
  while the quotient $\Bbb V/\Bbb V^1$ induces the $k$th power of $\Cal E(1)$, which
  is usually denoted by $\Cal E(k)$ and called the bundle of projective
  $k$-densities. Again, all properties claimed in \cref{lem4.1} are obviously
  satisfied.

  \medskip

  (2) \textbf{Conformal structures}. For $p+q=n\geq 3$, we put $G:=SO(p+1,q+1)$ and
  we take a basis $e_0,\dots,e_{n+1}$ for the standard representation $\Bbb R^{n+2}$
  of $G$ such that the non-trivial inner products are $\langle e_0,e_{n+1}\rangle=1$,
  and $\langle e_i,e_i\rangle=1$ for $1\leq i\leq p$ and $\langle e_i,e_i\rangle=-1$
  for $p+1\leq i\leq n$. Splitting matrices into blocks of sizes $1$, $n$, and $1$
  defines a $|1|$-grading of $\frak g$ according to $\left(\begin{smallmatrix} \frak
    g_0 & \frak g_1 & 0\\ \frak g_{-1} & \frak g_0 & * \\ 0 & * &
    * \end{smallmatrix}\right)$, where entries marked by $*$ are determined by other
  entries of the matrix. Again $G_0$ turns out to consist of block diagonal matrices
  and is isomorphic to the conformal group $CO(p+1,q+1)$ via the adjoint action on
  $\frak g_{-1}$. In view of \cref{rem3.1} we conclude that a parabolic
  geometry of type $(G,P)$ on a manifold $M$ is equivalent to a conformal
  structure. 

  The standard representation $\Bbb V$ of $G$ now decomposes as $\Bbb V=\Bbb
  V_0\oplus\Bbb V_1\oplus\Bbb V_2$, with the subspaces spanned by $e_0$,
  $\{e_1,\dots,e_n\}$, and $e_{n+1}$, respectively. This induces the \textit{standard
    tractor bundle} $\Cal TM$ on conformal manifolds, and the bundle induced by $\Bbb
  V/\Bbb V^1$ is the bundle $\Cal E[1]$ of \textit{conformal $1$-densities}. Hence
  \cref{thm4.1} in this case realizes $A$ as an open subbundle of the
  projectivization of the quotient of $\Cal TM$ by its smallest filtration
  component. Again, it is well known that this quotient is isomorphic to $J^1\Cal
  E[1]$.

  Alternatively, for $k\geq 2$, we can take $\Bbb V$ to be the trace-free part
  $S^k_0\Bbb R^{n+2}$ in the symmetric power of the standard representation. This
  leads to the bundle $S^k_0\Cal TM$ and the line bundle $\Cal E[k]$ of conformal
  densities of weight $k$. Here the decomposition from \cref{lem4.1} becomes a
  bit more complicated, since the individual pieces $\Bbb V_j$ are not irreducible
  representations of $G_0$ in general. Still the properties claimed in
  \cref{lem4.1} are obvious via the construction from the decomposition of the
  standard representation.

  \medskip

  (3) \textbf{Almost Grassmannian structures}. Here we choose integers $2\leq p\leq
  q$, put $n=p+q$, take  $G:=SL(n,\Bbb R)$ and $P\subset G$ the stabilizer of the
  subspace spanned by the first $p$ vectors of the standard basis of the standard
  representation $\Bbb R^n$ of $G$. Fixing the complementary subspace spanned by the
  remaining $q$ vectors in that basis, one obtains a decomposition of the Lie algebra
  $\frak g$ of $G$ into blocks of sizes $p$ and $q$, which defines a $|1|$-grading as
  in the projective case. Then $G_0$ again turns out to consist of block diagonal
  matrices and hence is isomorphic to $S(GL(p,\Bbb R)\x GL(q,\Bbb R))$, while $\frak
  g_{-1}$ can be identified with the space of $q\x p$-matrices endowed with the
  action of $G_0$ defined by matrix multiplication from both sides.

  Hence the corresponding geometries exist in dimension $pq$ and they are essentially
  given by an identification of the tangent bundle with a tensor product of two
  auxiliary bundles of rank $p$ and $q$, respectively, see Section 4.1.3 of
  \cite{book}. There it is also shown that for these types of structures the Cartan
  curvature has two fundamental components, but their nature depends on $p$ and
  $q$. For $p=q=2$, such a structure is equivalent to a split-signature conformal
  structure, so we will not discuss this case here. If $p=2$ and $q>2$, then one of
  these quantities is the intrinsic torsion of the structure, but the second is a
  curvature, so this is a case in which there are non-flat, torsion-free
  examples. For $p>3$, the intrinsic torsion splits into two components, and
  torsion-freeness of a geometry implies local flatness.

  The basic choice of a representation $\Bbb V$ that \cref{thm4.1} can be
  applied to is given by $\La^p\Bbb R^{n*}$, the $p$th exterior power of the dual of
  the standard representation. This decomposes as $\Bbb V=\Bbb
  V_0\oplus\dots\oplus\Bbb V_p$, where $\Bbb V_j$ is spanned by wedge products of
  elements of the dual of the standard basis that contain $p-j$ factors from
  $\{e_1^*,\dots,e_p^*\}$. The properties claimed in \cref{lem4.1} can be easily
  deduced from the construction from the dual of the standard representation.
\end{example}

\subsection{The projective Monge-Amp\`ere equation}\label{4.2}
This is the prototypical example of the non-linear PDE that we want to study. In the
setting of projective geometry, we have met the density bundles $\Cal E(k)$ for $k>0$
in \cref{ex4.1}. We define $\Cal E(-k)$ to be the line bundle dual to $\Cal E(k)$ and
use the convention that adding ``$(k)$'' to the name of a bundle indicates a tensor
product with $\Cal E(k)$ for $k\in\Bbb Z$. The first step towards the construction of
the projective Monge-Amp\`ere equation is that there is a projectively invariant,
linear, second order differential operator $H:\Ga(\Cal E(1))\to \Ga(S^2T^*M(1))$
called the \textit{projective Hessian}. Indeed, this is the first operator in the BGG
sequence determined by the standard cotractor bundle, see \cite{Proj-Comp}.

Now for a section $\si\in\Ga(\Cal E(1))$, $H(\si)$ defines a symmetric bilinear form
on each tangent space of $M$, and such a form has a well defined determinant. In projective geometry, this determinant admits an interpretation as a
density as follows. In the setting of part (1) of \cref{ex4.1}, the top
exterior power $\La^{n+1}\Bbb R^{(n+1)*}$ is a trivial representation, which implies
that the bundle $\La^{n+1}\Cal T^*M$ is canonically trivial. Identifying $\Cal T^*M$
with $J^1\Cal E(1)$, the jet exact sequence $0\to T^*M(1)\to J^1\Cal E(1)\to\Cal
E(1)\to 0$ implies that $\La^{n+1}\Cal T^*M\cong\La^nT^*M(n+1)$, so
$\La^nT^*M\cong\Cal E(-n-1)$. This isomorphism can be encoded as a tautological
section of $\La^nTM(-n-1)$. To form the determinant of $H(\si)$, one now takes the
tensor product of two copies of this canonical section and of $n$ copies of $H(\si)$
and forms the unique (potentially) non-trivial complete contraction of the result (so
the two indices of each copy of $H(\si)$ have to be contracted into different copies
of the tautological form). This shows that $\det(H(\si))$ can be naturally
interpreted as a section of $\Cal E(-n-2)$.

Assuming that $\si\in\Ga(\Cal E(1))$ is nowhere vanishing, we can form
$\si^k\in\Ga(\Cal E(k))$ for any $k\in\Bbb Z$, and hence 
\begin{equation}\label{eq:projmongeampere}
\det(H(\si))=\pm\si^{-n-2}
\end{equation} 
is a projectively invariant, fully non-linear PDE on nowhere vanishing sections of
$\Cal E(1)$. Observe that multiplying $\si$ by a constant, the two sides of the
equation scale by different powers of the constant, so allowing a constant factor instead of just a sign in the right hand side of the equation would only be a trivial
modification.

\subsection{Interpretation in terms of Weyl structures}\label{4.3}
Let us first observe that a nowhere vanishing section $\si\in\Cal E(1)$ uniquely
determines a Weyl structure. In the language of \cref{thm4.1} this can be
either described as the structure corresponding to the flat connection on $\Cal E(1)$
determined by $\si$ or as the one corresponding to the line in $\Cal T^*M$ spanned by
$S(\si)$, where $S$ denotes the BGG splitting operator. From either interpretation it
is clear that this Weyl structure remains unchanged if $\si$ is multiplied by a
non-zero constant. Alternatively, one can easily verify that any projective class on
$M$ contains a unique connection such that $\si$ is parallel for the induced
connection on $\Cal E(1)$.

To deal with non-flat cases in the following theorem, we use a concept of mean
curvature tailored to the case of connections compatible with an almost bi-Lagrangian
structure that was introduced in \cite{Smoczyk}. That article uses the terminology of
(almost) para-K\"ahler structures, which is slightly different from ours, but it is
easy to translate between the two.

\begin{thm}\label{thm4.3}
Let $M$ be an oriented smooth manifold of dimension $n$ which is endowed with a
projective structure. Let $\si\in\Ga(\Cal E(1))$ be a nowhere-vanishing section and
let us denote by $\nabla^\si$ the Weyl connections of the Weyl structure determined
by $\si$ and by $\Rho^{\si}$ its Rho tensor. Then we have:

(1) An appropriate constant multiple of $\si$ satisfies \eqref{eq:projmongeampere} if
and only if $\nabla^\si(\det(\Rho^{\si}))=0$ and $\det(\Rho^\si)$ is nowhere
vanishing.

(2) The Weyl structure determined by $\si$ is always Lagrangian. If $M$ is projectively
flat, then an appropriate constant multiple of $\si$ satisfies
\eqref{eq:projmongeampere} if and only if this Weyl structure is non-degenerate and
the image of the corresponding section $s:M\to A$ is a minimal submanifold.
 This extends to curved projective structures provided that minimality of $s(M)$ is
 defined as vanishing of the mean curvature form associated to the canonical
 connection $D$ via the definition in \cite[p.\ 120]{Smoczyk}.
\end{thm}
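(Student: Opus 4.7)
The plan is to translate both parts into statements about the Rho tensor $\Rho^\si$ of the Weyl connection $\nabla^\si$ determined by $\si$ (the unique connection in the projective class satisfying $\nabla^\si\si=0$). For (1), I would start from the standard description of the projective Hessian as the first BGG operator associated to the cotractor bundle, which for any connection $\nabla$ in the class takes the form $H(\tau)_{ij}=\nabla_i\nabla_j\tau+\Rho_{ij}\tau$. Evaluating with $\nabla=\nabla^\si$ and using $\nabla^\si\si=0$ collapses this to $H(\si)=\Rho^\si\si$. Using the canonical isomorphism $\La^nT^*M\cong\Cal E(-n-1)$ recalled in \cref{4.2}, one then obtains $\det(H(\si))=\si^n\det(\Rho^\si)$, so equation \eqref{eq:projmongeampere} becomes $\det(\Rho^\si)=\pm\si^{-2n-2}$. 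Since $\si^{-2n-2}$ is parallel for $\nabla^\si$, this is equivalent to $\det(\Rho^\si)$ being nowhere vanishing and $\nabla^\si$-parallel; any such $\det(\Rho^\si)$ then equals a non-zero constant multiple of $\si^{-2n-2}$, whose magnitude can be normalized to $1$ by rescaling $\si$ by a real constant, leaving the sign stated in the equation.

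For (2), the Lagrangian assertion follows from \cref{prop3.4}: the Weyl structure coming from a parallel scale is exact, and exact Weyl structures have symmetric Rho tensor (the projective incarnation of the classical fact that a connection in a projective class preserving a volume density has symmetric Ricci, and hence symmetric Schouten, tensor; see Section 5.1 of \cite{book}). Assume now that $M$ is projectively flat, so in particular the universal Cotton--York tensor $Y$ vanishes. By \cref{thm2.7} and Section 5.2.9 of \cite{book}, $Y$ pulls back along any Weyl structure to the covariant exterior derivative of its Rho tensor, giving $\nabla^\si_i\Rho^\si_{jk}-\nabla^\si_j\Rho^\si_{ik}=0$; combined with the symmetry of $\Rho^\si$, this forces $\nabla^\si\Rho^\si$ to be totally symmetric in all three indices.

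Feeding this total symmetry into the formula for $\mathrm{II}^s_h$ from \cref{thm3.4.2} collapses the three summands to a single term, yielding
\[
\mathrm{II}^s_h{}^k{}_{ij}=-\tfrac12(\Rho^\si)^{ka}\nabla^\si_a\Rho^\si_{ij}.
\]
The induced metric on $s(M)$ coincides with $\Rho^\si$ up to an overall sign by \cref{prop3.4}, so tracing $\mathrm{II}^s_h$ with $(\Rho^\si)^{ij}$ and using the standard identity $(\Rho^\si)^{ij}\nabla^\si_a\Rho^\si_{ij}=\nabla^\si_a\log|\det\Rho^\si|$ produces a mean curvature vector proportional to $(\Rho^\si)^{ka}\nabla^\si_a\log|\det\Rho^\si|$. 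Non-degeneracy of $s$ amounts to $\det(\Rho^\si)\ne 0$ by \cref{prop3.4}, and minimality is then equivalent to $\nabla^\si\det(\Rho^\si)=0$, i.e., by (1), to $\si$ satisfying \eqref{eq:projmongeampere} after rescaling by a real constant. The main obstacle I anticipate is the density bookkeeping in (1): pinning down the precise BGG formula for $H(\si)$ and the identification $\La^nT^*M\cong\Cal E(-n-1)$ with correct signs so that \eqref{eq:projmongeampere} really does translate into the parallelism condition $\nabla^\si\det(\Rho^\si)=0$. Once this is in place, the submanifold computation in (2) is driven entirely by \cref{thm3.4.2} and the total symmetry of $\nabla^\si\Rho^\si$ produced by the flatness hypothesis.
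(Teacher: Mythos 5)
Your proposal is correct and follows essentially the same route as the paper's proof: part (1) via the Weyl-structure formula for the projective Hessian collapsing to $H(\si)=\pm\Rho^\si\si$ and the identification of $\nabla^\si$-parallel densities with constant multiples of powers of $\si$, and part (2) via symmetry of $\Rho^\si$ from the Bianchi identity, total symmetry of $\nabla^\si\Rho^\si$ from vanishing of the Cotton--York tensor, and the second fundamental form of \cref{thm3.4.2}. The only cosmetic difference is that you simplify $\mathrm{II}^s_h$ by total symmetry before tracing and invoke the Jacobi identity for $\det$, whereas the paper first derives the minimality condition \eqref{eq:minlag} and separately computes $\nabla^\si\det(\Rho^\si)$ via the tautological section; these are the same computation in a different order.
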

\begin{proof}
  It is well known how to decompose the curvature of a linear connection on $TM$ into
  the projective Weyl curvature and the Rho-tensor, see Section 3.1 of \cite{BEG}
  (taking into account the sign conventions mentioned in \cref{2.6}). It is also
  shown there that the Bianchi identity shows that the skew part of the Rho tensor is
  a non-zero multiple of the trace of the curvature tensor, which describes the
  action of the curvature on the top exterior power of the tangent bundle and thus on
  density bundles. This readily implies that $\Rho^\si$ is symmetric, so the Weyl
  structure defined by $\si$ is Lagrangian by \cref{prop3.4}.

  (1) It is well known that the projective Hessian in terms of a linear connection
  $\nabla$ in the projective class and its Rho tensor $\Rho$ is given by the
  symmetrization of $\nabla^2\si-\Rho\si$, see Section 3.2 of \cite{Proj-Comp}. (The
  different sign is caused by different sign conventions for the Rho-tensor.) But by
  definition $\nabla^\si\si=0$ and $\Rho^{\si}$ is symmetric, so we conclude that
  $H(\si)=-\Rho^\si\si$ and hence $\det(H(\si))=(-1)^n\si^n\det(\Rho^{\si})$. Now for
  each $k\neq 0$, the sections of $\Cal E(k)$ that are parallel for $\nabla^\si$ are
  exactly the constant multiples of $\si^k$, so (1) follows readily.

  (2) Since $\Rho^\si$ is symmetric, $\det(\Rho^\si)$ is nowhere vanishing if and
  only if $\Rho^\si$ is non-degenerate. By \cref{prop3.4}, this is
  equivalent to non-degeneracy of the Weyl structure determined by $\si$, and we
  assume this from now on. Let us write $\det(\Rho^{\si})$ in terms of the tautological
  section $\ep$ of $\La^nTM(-n-1)$ as above as
  $C(\ep\otimes\ep\otimes(\Rho^{\si})^{\otimes^n})$, where $C$ denotes the appropriate
  contraction. Applying $\nabla^\si$ to this, we observe that $C$ and $\ep$ are
  projectively invariant bundle maps and thus parallel for any Weyl connection. Thus
  we conclude that
  \[
  \nabla^\si\det(\Rho^{\si})=(n-1)C\left(\ep\otimes\ep\otimes(\Rho^{\si})^{\otimes^{n-1}}
  \otimes\nabla^\si\Rho^\si\right).
  \] Here we have used that the contraction is symmetric in the bilinear forms we
  enter and in the right hand side the form index of $\nabla^{\sigma}$ remains
  uncontracted. Since we assume that $\Rho^\si$ is invertible, linear algebra tells
  us that contracting two copies of $\ep$ with $(\Rho^{\si})^{\otimes^{n-1}}$ gives
  $\det(\Rho^\sigma)\Phi$, where $\Ph\in\Ga(S^2TM)$ is the inverse of
  $\Rho^\si$. Returning to the abstract index notation used in \cref{thm3.4.2} and
  writing $\nabla$ and $\Rho$ instead of $\nabla^\sigma$ and $\Rho^\sigma$, we
  conclude that $\nabla\det(\Rho)=0$ is equivalent to $0=\Rho^{ab}\nabla_i\Rho_{ab}$.

  On the other hand, we know the second fundamental form of $s(M)\subset A$ from
  \cref{thm3.4.2}. To determine the mean curvature, we have to contract an
  inverse metric into this expression, and we already know that this inverse metric
  is just $\Rho^{ij}$. Vanishing or non-vanishing of the result is
  independent of the final contraction with $\Rho^{kc}$. Thus we conclude that
  $s(M)\subset A$ is minimal if and only if
  \begin{equation}\label{eq:minlag}
  0=\Rho^{ab}(2\nabla_a\Rho_{bi}-\nabla_i\Rho_{ab}).
  \end{equation}
  In the proof of \cref{thm3.4.2}, we have also noted that the Cotton-York tensor is
  given by the alternation of $\nabla_i\Rho_{jk}$ in the first two indices. It is
  well known that this vanishes for projectively flat structures (see \cite{BEG}) and
  hence in the projectively flat case, $\nabla_i\Rho_{jk}$ is completely
  symmetric. Using this, the  
  claim in the projectively flat case  follows immediately.

  In the non-flat case, we first have to determine the map $\phi$ from Lemma 4 of
  \cite{Smoczyk}. In our notation, the map $P$ used there is given by
  $P|_{L^\pm}=\pm\id$. Using this, the beginning of the proof of \cref{thm3.4.1}
  readily shows that, in the notation used there, for $\xi\in T_xM\cong L^-_{s(x)}$,
  we get $\phi(\xi^i)=(\xi^i,-\Rho_{ja}\xi^a)$. Now we can combine this with the
  formula for $II_D$ from \cref{thm3.4.2}, which describes the operator
  $\widehat{A}$ used in \cite{Smoczyk}. This easily shows that, up to a non-zero
  factor, that the operator $\widehat{h}$ from \cite{Smoczyk} is given by
    $$
    \widehat{h}(\xi^i,\eta^j,\ze^k)=\xi^i\eta^j\ze^k\Rho_{ia}\Rho^{ab}\nabla^s_j\Rho_{kb}. 
    $$
    By definition, the mean curvature form $\widehat{H}$ from \cite{Smoczyk} is the
    trace over the first and third entry of this. Thus we have to contract
    $\widehat{h}_{ijk}$ with $\Rho^{ik}$, which again leads to
    $\Rho^{ab}\nabla^s_j\Rho_{ab}$.
\end{proof}
\begin{remark}
In the special case of a two-dimensional projective structure the minimality
condition \eqref{eq:minlag} was previously obtained in \cite[Theorem 4.4]{Mettler:MinLag}.
\end{remark}

A deep relation between solutions of the projective Monge-Amp\`ere equation and
properly convex projective structures was established in the works 
\cite{MR2402597} by Labourie and \cite{MR1828223} by Loftin. Recall that a projective manifold $(M,[\nabla])$ is
called~\textit{properly convex} if it arises as a quotient of a properly convex open
set $\tilde{M}\subset \mathbb{RP}^n$ by a group $\Gamma$ of projective
transformations which acts discretely and properly discontinuously.  The projective
line segments contained in $\tilde{M}$ project to $M$ to become the geodesics of
$[\nabla]$. Therefore, locally, the geodesics of a properly convex projective
structure $[\nabla]$ can be mapped diffeomorphically to segments of straight lines,
that is, $[\nabla]$ is locally flat. Combining the work of Labourie and Loftin  with \cref{thm4.3}, we obtain:

\begin{cor}\label{cor:propconvex}
Let $(M,[\nabla])$ be a closed oriented locally flat projective manifold. Then $[\nabla]$ is
properly convex if and only if $[\nabla]$ arises from a minimal Lagrangian Weyl
structure whose Rho tensor is positive definite.
\end{cor}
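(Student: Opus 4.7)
The plan is to concatenate \cref{thm4.3} with the theorems of Labourie~\cite{MR2402597} and Loftin~\cite{MR1828223}, which characterise properly convex locally flat projective structures on closed oriented manifolds in terms of positive solutions of the projective Monge--Amp\`ere equation \eqref{eq:projmongeampere}. The bridge between Weyl structures and sections of $\Cal E(1)$ has already been set up in \cref{4.3}: a nowhere vanishing $\si\in\Ga(\Cal E(1))$ determines a Weyl structure, unchanged under rescaling of $\si$ by a nonzero constant, and on a closed oriented $M$ such sections exist and exhaust all Weyl structures this way.

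I would first apply \cref{thm4.3}~(2) to rephrase the condition ``minimal Lagrangian Weyl structure with non-degenerate Rho tensor'', in the locally flat setting, as the statement that an appropriate constant multiple of the corresponding section $\si$ solves \eqref{eq:projmongeampere}. To track the sign, I would use the identity $H(\si)=-\Rho^\si\si$ derived inside the proof of \cref{thm4.3}~(1), which gives $\det H(\si)=(-1)^n\si^n\det\Rho^\si$. Choosing the overall sign of $\si$ appropriately, this shows that \emph{positive definiteness} of $\Rho^\si$ pins down one definite branch of \eqref{eq:projmongeampere}, in combination with the orientation and with the sign convention of \cite{book} for $\Rho$ recorded just after the corollary.

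The remaining step is to invoke the Labourie--Loftin theorem: on a closed oriented locally flat projective $n$-manifold, proper convexity of $[\nabla]$ is equivalent to the existence of a nowhere vanishing $\si\in\Ga(\Cal E(1))$ satisfying \eqref{eq:projmongeampere} with the sign produced by the hyperbolic affine sphere over the properly convex cone lifting $M$. Matching this sign with the one isolated in the previous paragraph yields both directions of the corollary: proper convexity is equivalent to the existence of such a solution $\si$, which in turn is equivalent, via \cref{thm4.3}, to $[\nabla]$ arising from a minimal Lagrangian Weyl structure with positive definite Rho tensor.

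The main obstacle I foresee is purely a sign-bookkeeping issue: one must verify that the \emph{hyperbolic} (rather than elliptic) branch of the Labourie--Loftin correspondence matches \emph{positive} (rather than negative) definiteness of $\Rho^\si$ in the conventions of \cite{book}. Once this identification is locked down, the corollary is a formal concatenation of \cref{thm4.3}~(2) with the Labourie--Loftin theorem and requires no further analytic or geometric input.
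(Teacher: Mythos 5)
Your overall strategy---concatenating \cref{thm4.3} with the Labourie--Loftin correspondence---is the right one, and your converse direction (properly convex $\Rightarrow$ minimal Lagrangian Weyl structure with positive definite Rho tensor) matches the paper: Loftin's Theorem~4 in \cite{MR1828223} produces a convex solution $\si$ of \eqref{eq:projmongeampere}, \cref{thm4.3}~(2) converts it into a minimal Lagrangian Weyl structure, and positive definiteness of $H(\si)$ gives positive definiteness of $\Rho^\si$. However, there is a genuine gap in your forward direction. You assert that on a closed oriented $M$ the nowhere vanishing sections of $\Cal E(1)$ ``exhaust all Weyl structures''. That is false: by \cref{thm4.1}, Weyl structures correspond to arbitrary linear connections on $\Cal E(1)$, and only the \emph{exact} ones (those admitting a global parallel section) arise from a section $\si$. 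Since \cref{thm4.3}~(2) is stated only for Weyl structures of the form ``determined by $\si$'', you cannot apply it to an arbitrary minimal Lagrangian Weyl structure with positive definite Rho tensor until you have shown that such a Weyl structure is exact. This is repairable with material already in the paper: the minimality computation in the proof of \cref{thm4.3}~(2) does not use exactness and yields $\Rho^{ab}\nabla^s_i\Rho_{ab}=0$, i.e.\ $\nabla^s\det(\Rho^s)=0$, so $\nabla^s$ preserves a nowhere vanishing density, and on an oriented manifold an appropriate root of it gives a parallel section of $\Cal E(1)$. You must make this step explicit; it is exactly the step the paper performs before invoking Labourie.

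Beyond that gap, your route for the forward direction genuinely differs from the paper's. The paper does not pass back through the Monge--Amp\`ere equation at all: having established that $\nabla^s$ preserves a volume density, it feeds the pair $(\nabla^s,\Rho^s)$---with $\Rho^s$ symmetric (Lagrangian), positive definite (hypothesis), and $\nabla^s\Rho^s$ totally symmetric (local flatness)---directly into Theorem~3.2.1 of \cite{MR2402597}, which characterizes proper convexity in terms of such pairs. Your route through \eqref{eq:projmongeampere} can be made to work, but be aware that the equation constrains only $\det H(\si)$ and not the signature of $H(\si)$; positive definiteness is an extra datum that you must carry separately into whichever form of the Cheng--Yau/Labourie existence result you invoke, since a solution of \eqref{eq:projmongeampere} with the ``hyperbolic'' sign need not a priori have definite Hessian. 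The paper's choice of Labourie's pair formulation sidesteps this bookkeeping, because there positive definiteness enters as a hypothesis rather than as something to be extracted from a sign of a determinant.
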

\begin{proof}
  Suppose that the flat projective structure $[\nabla]$ arises from a minimal Lagrangian
  Weyl structure $s$ whose Rho tensor $\Rho^s$ is positive definite. Since $s$ is
  Lagrangian, $\Rho^s$ is a constant negative multiple of the Ricci tensor of the
  Weyl connection $\nabla^s$, see \cite{BEG}, and taking into account the sign issue
  mentioned in \cref{2.6}.  By \cref{thm3.4.2}, the nowhere vanishing density
  $\det(\Rho^s)$ is preserved by $\nabla^s$, whence an appropriate power defines a
  volume density that is parallel for $\nabla^s$. Finally, projective flatness
  implies that the pair $(\nabla^s,\Rho^s)$ satisfies the hypothesis of Theorem 3.2.1
  of \cite{MR2402597}, which then implies that $[\nabla]$ is properly convex.

Conversely, suppose that $[\nabla]$ is properly convex. By \cite[Theorem 4]{MR1828223}
there is a solution $\sigma$ to the projective Monge-Amp\`ere equation with right hand
side $(-1)^{n+2}\si^{n+2}$ and such that $\si$ is negative for the natural
orientation on $\Cal E(1)$. By \cref{thm4.3}, $[\nabla]$ arises from a minimal
Lagrangian Weyl structure. Since the Hessian of $\sigma$ is positive definite, so is
the Rho tensor.
\end{proof}

\begin{remark}
Existence and uniqueness of minimal Lagrangian Weyl structures for a given torsion-free AHS structure is an interesting fully non-linear PDE problem. In the special case of projective surfaces, some partial results regarding uniqueness have been obtained in~\cite{MR3384876} and \cite{arXiv:1804.04616}. See also \cite{MR3968880} for a connection to dynamical systems and \cite{arXiv:1510.01043} for a related variational problem on the space of conformal structures. 
\end{remark}

\subsection{Invariant non-linear PDE for other AHS structures}\label{4.4}
We conclude this article with some remarks on analogs of the projective
Monge-Amp\`ere equation for other AHS structures. The first observation is that a
small representation theoretic condition is sufficient to obtain an analog of the
projectively invariant Hessian, which again is closely related to the Rho tensor.

To formulate this, we need a bit of background. Suppose that $(G,P)$ corresponds to a
$|1|$-grading of $G$ and let $G_0\subset P$ be the subgroup determined by the
grading. Then this naturally acts on each $\frak g_i$, and there is an induced
representation on $S^2\frak g_1$. We can decompose this representation into
irreducibles and there is a unique component whose highest weight is twice the
highest weight of $\frak g_1$. This is called the \textit{Cartan square} of $\frak
g_1$ and denoted by $\ocirc^2\frak g_1$. It comes with a canonical $G_0$-equivariant
projection $\pi:\otimes^2\frak g_1\to\ocirc^2\frak g_1$.  For any parabolic geometry
of type $(G,P)$, this induces a natural subbundle $\ocirc^2T^*M\subset S^2T^*M$ and a
natural bundle map $\pi:\otimes^2T^*M\to\ocirc^2T^*M$.

\begin{prop}\label{prop4.4}
Suppose that $(G,P)$ corresponds to a $|1|$-grading on the simple Lie algebra $\frak
g$ of $G$. Suppose further, that there is a representation $\Bbb V$ of $G$ satisfying
the assumptions of \cref{thm4.1} whose complexification is a fundamental
representation of the complexification of $\frak g$, and let $\Cal EM$ denote the
natural line bundle induced by $\Bbb V/\Bbb V^1$.

(1) There is an invariant differential operator $H:\Ga(\Cal
EM)\to\Ga(\ocirc^2T^*M\otimes\Cal EM)$ of second order.

(2) For a nowhere vanishing section $\si\in\Ga(\Cal EM)$, let $\Rho^\si$ be the Rho
tensor of the Weyl structure determined by $\si$. Then $H(\si)$ is a non-zero
multiple of $\pi(\Rho^\si)\si$, where $\pi$ is the projection to the Cartan square.
\end{prop}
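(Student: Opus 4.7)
The plan is to realize $H$ as the first operator in the BGG sequence associated to the tractor bundle $\mathcal VM$. Following \cite{CSS-BGG,Calderbank-Diemer}, the canonical tractor connection $\nabla^{\mathcal V}$ induced by $\om$ together with the Kostant codifferential yields a sequence of invariant differential operators $D_i:\Gamma(\mathcal H_i)\to\Gamma(\mathcal H_{i+1})$, where $\mathcal H_i$ is the natural bundle attached to the $\frak g_0$-module $H_i(\frak g_-,\Bbb V)$. Since $H_0(\frak g_-,\Bbb V)=\Bbb V/\Bbb V^1$, the hypothesis of the proposition identifies $\mathcal H_0$ with $\mathcal EM$, and I would take $H:=D_0$.

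For part (1) the heart of the matter is then to pin down $\mathcal H_1$ as $\ocirc^2 T^*M\otimes\mathcal EM$ and to show that $D_0$ has order two. Both assertions follow from Kostant's theorem, which expresses $H^1(\frak p_+,\Bbb V)$ as a direct sum of irreducible $\frak g_0$-modules indexed by length-one elements in the Hasse diagram of $W^{\frak p}$, with the order of $D_0$ equal to that length. The hypothesis that $\Bbb V^{\Bbb C}$ is fundamental, combined with the one-dimensionality of $\Bbb V/\Bbb V^1$, forces $H^1$ to consist of a single irreducible summand whose highest weight is twice the highest weight of $\frak g_1$ tensored with that of $\Bbb V/\Bbb V^1$; by definition of the Cartan square this is $\ocirc^2\frak g_1\otimes(\Bbb V/\Bbb V^1)$, whose associated natural bundle is $\ocirc^2 T^*M\otimes\mathcal EM$. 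The corresponding Hasse element has length two, which gives the claimed order.

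For part (2) I would use the explicit presentation of $D_0$ in terms of a Weyl structure. The BGG splitting operator $S:\Gamma(\mathcal EM)\to\Gamma(\mathcal VM)$ is constructed inductively from a Weyl connection $\nabla$ and its Rho tensor $\Rho$, so $D_0(\sigma)$ admits a universal second-order expression of the form $\pi\bigl(c_1\operatorname{Sym}(\nabla\nabla\sigma)+c_2\Rho\sigma\bigr)$ with $c_2\neq 0$, where $\pi:\otimes^2 T^*M\otimes\mathcal EM\to\ocirc^2 T^*M\otimes\mathcal EM$ is the projection induced from the Cartan-square projection. This is the direct analogue of the projective Hessian formula exploited in the proof of \cref{thm4.3}. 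Specialising to the Weyl structure determined by $\sigma$, for which $\nabla^\sigma\sigma=0$ and hence also $\nabla^\sigma\nabla^\sigma\sigma=0$, all derivative terms vanish, and one is left with $H(\sigma)=c\,\pi(\Rho^\sigma)\sigma$ for a nonzero universal constant $c$.

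The principal obstacle is the Kostant-theoretic step of paragraph two: pinning $\mathcal H_1$ down as precisely the Cartan-square bundle, and verifying that the relevant Hasse element has length two. This is a finite weight calculation that makes essential use of the fundamental-representation hypothesis; in the projective and conformal cases it can be verified by direct inspection, but the general statement requires a careful analysis of the Weyl-group orbit through the highest weight of $\Bbb V$.
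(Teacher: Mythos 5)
Your proposal follows essentially the same route as the paper: $H$ is taken to be the first BGG operator of the tractor bundle induced by $\Bbb V$, and part (2) is obtained by specialising the universal Weyl-structure formula (symmetrise and Cartan-square-project $\nabla^2\si-\Rho^s\si$) to the Weyl structure determined by $\si$, for which $\nabla^\si\si=0$. The Kostant-theoretic step you flag as the principal obstacle --- identifying $H^1$ as the Cartan square twisted by $\Bbb V/\Bbb V^1$ and checking that the Hasse element has length two --- is precisely what the paper outsources to \cite{BCEG}, after first noting that $\dim(\Bbb V/\Bbb V^1)=1$ already forces $\Bbb V^{\Bbb C}$ to be the fundamental representation attached to the crossed simple root; so this is a known computation rather than a genuine gap.
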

\begin{proof}
(1) The representation $\Bbb V$ induces a tractor bundle on parabolic geometries of
  type $(G,P)$ to which the construction of BGG sequences can be applied. The first
  operator $H$ in the resulting sequence is defined on $\Gamma(\mathcal EM)$. Now the
  condition that $\dim(\Bbb V/\Bbb V^1)=1$ implies that the complexification of $\Bbb
  V$ is the fundamental representation corresponding to the simple root that induces
  the $|1|$-grading that defines $\frak p$. Since the complexification of $\Bbb V$ is
  a fundamental representation, the results of \cite{BCEG} show that the first
  operator in the BGG sequence has order two and the target space claimed in (1).

\smallskip

(2) It is also known in general (see \cite{AHS1} or \cite{CDS}) how to write out $H$
in terms of a Weyl structure with Weyl connection $\nabla^s$ and Rho tensor $\Rho^s$:
For $\si\in\Gamma(\mathcal EM)$, one then has to form $\nabla^2\si-\Rho^s\si$, symmetrize and
then project to the Cartan square. But if $s$ is the Weyl structure determined by
$\si$, then by definition $\nabla^s\si=0$ and $\Rho^s=\Rho^\si$, which implies the
claim.
\end{proof}
 
Observe that \cref{ex4.1} provides representations $\Bbb V$ that satisfy the
assumptions of the proposition for conformal and for almost Grassmannian
structures. Hence for these two geometries an invariant Hessian is available. It is
worth mentioning that, for conformal structures, $\pi(\Rho^\si)$ is the trace-free
part of $\Rho^\si$.

\medskip

It is also a general fact that the top-exterior power of $T^*M$ is isomorphic
to a positive, integral power of the dual $\Cal E^*M$ of $\Cal EM$: By definition, the
grading element $E$ acts by multiplication by $\dim(\frak g_1)$ on the top exterior
power of $\frak g_1$, which represents the top exterior power of $T^*M$. On the other
hand, the construction implies that a generator of $\Bbb V_0\subset\Bbb V$ will be a
lowest weight vector of the complexification of $\Bbb V$, so $E$ acts by a negative
number on this. The fact that we deal with a fundamental representation implies
that $\dim(\mathfrak g_{-1})$ is an integral multiple of that number. As in the projective case, this can be phrased as the
existence of a tautological section, which can then be used together with copies of
$H(\si)$ to obtain a section of a line bundle, which can be trivial, a tensor power
of $\Cal E$ or a tensor power of $\Cal E^*$. In any case, a nowhere vanishing section
of $\Cal E$ determines a canonical section of that bundle (which is the constant $1$
in the trivial case), so there is an invariant version of the Monge-Amp\`ere
equation. In view of part (2) of \cref{prop4.4}, for these equations there
is always an analog of part (1) of \cref{thm4.3}. 

For some of the structures, there are additional natural sections that can be used
together with powers of $H(\si)$ to construct other non-linear invariant operators,
for example, the conformal metric for conformal structures and partial (density
valued) volume forms for Grassmannian structures. Again  part (2) of
\cref{prop4.4} shows that all these equations can be phrased as equations on
$\Rho^\si$, so there should be a relation to submanifold geometry of Weyl structures
in the style of part (2) of \cref{thm4.3}. All this will be taken up in detail
elsewhere.

\begin{bibdiv}
  \begin{biblist}

  \bib{MR2545240}{article}{
   author={Bader, Uri},
   author={Frances, Charles},
   author={Melnick, Karin},
   title={An embedding theorem for automorphism groups of Cartan geometries},
   journal={Geom. Funct. Anal.},
   volume={19},
   date={2009},
   number={2},
   pages={333--355},
   issn={1016-443X},
   review={\MR{2545240}},
}

  \bib{BEG}{article}{
   author={Bailey, T. N.},
   author={Eastwood, M. G.},
   author={Gover, A. R.},
   title={Thomas's structure bundle for conformal, projective and related
   structures},
   journal={Rocky Mountain J. Math.},
   volume={24},
   date={1994},
   number={4},
   pages={1191--1217},
   issn={0035-7596},
   review={\MR{1322223}},
}   

\bib{BastonI}{article}{
   author={Baston, R. J.},
   title={Almost Hermitian symmetric manifolds. I. Local twistor theory},
   journal={Duke Math. J.},
   volume={63},
   date={1991},
   number={1},
   pages={81--112},
   issn={0012-7094},
   review={\MR{1106939}},
}

\bib{BastonII}{article}{
   author={Baston, R. J.},
   title={Almost Hermitian symmetric manifolds. II. Differential invariants},
   journal={Duke Math. J.},
   volume={63},
   date={1991},
   number={1},
   pages={113--138},
   issn={0012-7094},
   review={\MR{1106940}},
}

\bib{BCEG}{article}{
   author={Branson, Thomas},
   author={\v{C}ap, Andreas},
   author={Eastwood, Michael},
   author={Gover, A. Rod},
   title={Prolongations of geometric overdetermined systems},
   journal={Internat. J. Math.},
   volume={17},
   date={2006},
   number={6},
   pages={641--664},
   issn={0129-167X},
   review={\MR{2246885}},
}

\bib{MR1824987}{article}{
   author={Bryant, Robert L.},
   title={Bochner-K\"{a}hler metrics},
   journal={J. Amer. Math. Soc.},
   volume={14},
   date={2001},
   number={3},
   pages={623--715},
   issn={0894-0347},
   review={\MR{1824987}},
}

\bib{MR0365607}{article}{
   author={Calabi, Eugenio},
   title={Complete affine hyperspheres. I},
   conference={
      title={Symposia Mathematica, Vol. X},
      address={Convegno di Geometria Differenziale, INDAM, Rome},
      date={1971},
   },
   book={
      publisher={Academic Press, London},
   },
   date={1972},
   pages={19--38},
   review={\MR{0365607}},
}

\bib{MR3210600}{article}{
   author={Calderbank, David M. J.},
   title={Selfdual 4-manifolds, projective surfaces, and the Dunajski-West
   construction},
   journal={SIGMA Symmetry Integrability Geom. Methods Appl.},
   volume={10},
   date={2014},
   pages={Paper 035, 18},
   issn={1815-0659},
   review={\MR{3210600}},
}

  \bib{Calderbank-Diemer}{article}{
   author={Calderbank, David M. J.},
   author={Diemer, Tammo},
   title={Differential invariants and curved Bernstein-Gelfand-Gelfand sequences},
   journal={J. Reine Angew. Math.},
   volume={537},
   date={2001},
   pages={67--103},
   issn={0075-4102},
   review={\MR{1856258}},
}

 \bib{CDS}{article}{
   author={Calderbank, David M. J.},
   author={Diemer, Tammo},
   author={Sou\v{c}ek, Vladim\'{\i}r},
   title={Ricci-corrected derivatives and invariant differential operators},
   journal={Differential Geom. Appl.},
   volume={23},
   date={2005},
   number={2},
   pages={149--175},
   issn={0926-2245},
   review={\MR{2158042}},
 }

 \bib{Proj-Comp}{article}{
   author={\v{C}ap, Andreas},
   author={Gover, A. Rod},
   title={Projective compactifications and Einstein metrics},
   journal={J. Reine Angew. Math.},
   volume={717},
   date={2016},
   pages={47--75},
   issn={0075-4102},
   review={\MR{3530534}},
 }

 \bib{hol-red}{article}{
   author={\v{C}ap, A.},
   author={Gover, A. R.},
   author={Hammerl, M.},
   title={Holonomy reductions of Cartan geometries and curved orbit
   decompositions},
   journal={Duke Math. J.},
   volume={163},
   date={2014},
   number={5},
   pages={1035--1070},
   issn={0012-7094},
   review={\MR{3189437}},
}

\bib{Einstein}{article}{
   author={\v{C}ap, A.},
   author={Gover, A. R.},
   author={Macbeth, H. R.},
   title={Einstein metrics in projective geometry},
   journal={Geom. Dedicata},
   volume={168},
   date={2014},
   pages={235--244},
   issn={0046-5755},
   review={\MR{3158041}},
}

\bib{book}{book}{
   author={{\v{C}}ap, Andreas},
   author={Slov{\'a}k, Jan},
   title={Parabolic geometries. I},
   series={Mathematical Surveys and Monographs},
   volume={154},
   note={Background and general theory},
   publisher={American Mathematical Society},
   place={Providence, RI},
   date={2009},
   pages={x+628},
   isbn={978-0-8218-2681-2},
   review={\MR{2532439}},
}

\bib{Weyl}{article}{
   author={\v{C}ap, Andreas},
   author={Slov\'{a}k, Jan},
   title={Weyl structures for parabolic geometries},
   journal={Math. Scand.},
   volume={93},
   date={2003},
   number={1},
   pages={53--90},
   issn={0025-5521},
   review={\MR{1997873}},
}

\bib{AHS1}{article}{
   author={\v{C}ap, A.},
   author={Slov\'{a}k, J.},
   author={Sou\v{c}ek, V.},
   title={Invariant operators on manifolds with almost Hermitian symmetric
   structures. I. Invariant differentiation},
   journal={Acta Math. Univ. Comenian. (N.S.)},
   volume={66},
   date={1997},
   number={1},
   pages={33--69},
   issn={0862-9544},
   review={\MR{1474550}},
}

\bib{CSS-BGG}{article}{
   author={{\v{C}}ap, Andreas},
   author={Slov{\'a}k, Jan},
   author={Sou{\v{c}}ek, Vladim{\'{\i}}r},
   title={Bernstein-Gelfand-Gelfand sequences},
   journal={Ann. of Math.},
   volume={154},
   date={2001},
   number={1},
   pages={97--113},
   issn={0003-486X},
   review={\MR{1847589}},
}

\bib{Smoczyk}{article}{
    AUTHOR = {Chursin, Mykhaylo},
    AUTHOR = {Sch\"{a}fer, Lars},
    AUTHOR = {Smoczyk, Knut},
     TITLE = {Mean curvature flow of space-like {L}agrangian submanifolds in
              almost para-{K}\"{a}hler manifolds},
   JOURNAL = {Calc. Var. Partial Differential Equations},
    VOLUME = {41},
      YEAR = {2011},
    NUMBER = {1-2},
     PAGES = {111--125},
      ISSN = {0944-2669},
  review = {\MR{2782799}},
}

\bib{Dunajski-Mettler}{article}{
   author={Dunajski, Maciej},
   author={Mettler, Thomas},
   title={Gauge theory on projective surfaces and anti-self-dual Einstein
   metrics in dimension four},
   journal={J. Geom. Anal.},
   volume={28},
   date={2018},
   number={3},
   pages={2780--2811},
   issn={1050-6926},
   review={\MR{3833818}},
}

\bib{Frances}{article}{
   author={Frances, Charles},
   title={Sur le groupe d'automorphismes des g\'{e}om\'{e}tries paraboliques de rang
   1},
   journal={Ann. Sci. \'{E}cole Norm. Sup. (4)},
   volume={40},
   date={2007},
   number={5},
   pages={741--764},
   issn={0012-9593},
   review={\MR{2382860}},
}

\bib{Herzlich}{article}{
   author={Herzlich, Marc},
   title={Parabolic geodesics as parallel curves in parabolic geometries},
   journal={Internat. J. Math.},
   volume={24},
   date={2013},
   number={9},
   pages={1350067, 16},
   issn={0129-167X},
   review={\MR{3109439}},
}

\bib{MR2854277}{article}{
    AUTHOR = {Hildebrand, Roland},
     TITLE = {Half-dimensional immersions in para-{K}\"{a}hler manifolds},
   JOURNAL = {Int. Electron. J. Geom.},
    VOLUME = {4},
      YEAR = {2011},
    NUMBER = {2},
     PAGES = {85--113},
      ISSN = {1307-5624},
   review = {\MR{2854277}},
}
\bib{MR2854275}{article}{
    AUTHOR = {Hildebrand, Roland},
     TITLE = {The cross-ratio manifold: a model of centro-affine geometry},
   JOURNAL = {Int. Electron. J. Geom.},
  
    VOLUME = {4},
      YEAR = {2011},
    NUMBER = {2},
     PAGES = {32--62},
      ISSN = {1307-5624},
   review = {\MR{2854275}},
}

\bib{KN1}{article}{
   author={Kobayashi, Shoshichi},
   author={Nagano, Tadashi},
   title={On filtered Lie algebras and geometric structures. I},
   journal={J. Math. Mech.},
   volume={13},
   date={1964},
   pages={875--907},
   review={\MR{0168704}},
}

\bib{MR2402597}{article}{
   author={Labourie, Fran\c{c}ois},
   title={Flat projective structures on surfaces and cubic holomorphic
   differentials},
   journal={Pure Appl. Math. Q.},
   volume={3},
   date={2007},
   number={4, Special Issue: In honor of Grigory Margulis.},
   pages={1057--1099},
   issn={1558-8599},
   review={\MR{2402597}},
}

\bib{MR1828223}{article}{
   author={Loftin, John C.},
   title={Affine spheres and convex $\Bbb{RP}^n$-manifolds},
   journal={Amer. J. Math.},
   volume={123},
   date={2001},
   number={2},
   pages={255--274},
   issn={0002-9327},
   review={\MR{1828223}},
}

\bib{arXiv:1510.01043}{article}{
   author={Mettler, Thomas},
   title={Extremal conformal structures on projective surfaces},
   journal={Ann. Sc. Norm. Super. Pisa Cl. Sci. (5)},
   volume={20},
   date={2020},
   number={4},
   pages={1621--1663},
   issn={0391-173X},
   review={\MR{4201191}},
}

\bib{MR3384876}{article}{
   author={Mettler, Thomas},
   title={Geodesic rigidity of conformal connections on surfaces},
   journal={Math. Z.},
   volume={281},
   date={2015},
   number={1-2},
   pages={379--393},
   issn={0025-5874},
   review={\MR{3384876}},
}

\bib{Mettler:MinLag}{article}{
   author={Mettler, Thomas},
   title={Minimal Lagrangian connections on compact surfaces},
   journal={Adv. Math.},
   volume={354},
   date={2019},
   pages={106747, 36~pp.},
   issn={0001-8708},
   review={\MR{3987443}},
}

\bib{arXiv:1804.04616}{article}{
  Author = {Mettler, Thomas},
  author = {Paternain, Gabriel P.},
  Title = {Convex projective surfaces with compatible {W}eyl connection are hyperbolic},
  journal = {Anal.~PDE},
  number = {4},
  pages ={1073--1097},
  volume = {13},
  date = {2020},
  review={\MR{4109900}},
}

\bib{MR3968880}{article}{
   author={Mettler, Thomas},
   author={Paternain, Gabriel P.},
   title={Holomorphic differentials, thermostats and Anosov flows},
   journal={Math. Ann.},
   volume={373},
   date={2019},
   number={1-2},
   pages={553--580},
   issn={0025-5831},
   review={\MR{3968880}},
}

\bib{arXiv:1803.06870}{article}{
   author={Wienhard, Anna},
   title={An invitation to higher Teichm\"{u}ller theory},
   conference={
      title={Proceedings of the International Congress of
      Mathematicians---Rio de Janeiro 2018. Vol. II. Invited lectures},
   },
   book={
      publisher={World Sci. Publ., Hackensack, NJ},
   },
   date={2018},
   pages={1013--1039},
   review={\MR{3966798}},
}
\end{biblist}
\end{bibdiv}

\end{document}